\newtheorem{theo}{Theorem}[section] 
\newtheorem{defi}[theo]{Definition}
\newtheorem{lemm}[theo]{Lemma}
\newtheorem{rema}[theo]{Remark}
\newtheorem{coro}[theo]{Corollary}
\newtheorem{example}[theo]{Example}
\newcommand{\beq}{\begin{equation}}
\newcommand{\eeq}{\end{equation}}
\newcommand{\beqs}{\begin{equation*}}
\newcommand{\eeqs}{\end{equation*}}
\newcommand{\D}{\mathbb D}
\newcommand{\C}{\mathbb C}
\newcommand{\R}{\mathbb R}
\newcommand{\K}{\mathbb K}
\newcommand{\imagunit}{{\bf i}}
\newcommand{\conjg}[1]{\overline{#1}} 
\newcommand{\goes}{\rightarrow}
\newcommand{\rank}[1]{\mathrm{rank}#1} 
\newcommand{\diag}{\mathrm{diag}} 
\newcommand{\Cmn}[2]{\C^{{#1}\times{#2}}}  
\newcommand{\Rmn}[2]{\R^{{#1}\times{#2}}}
\newcommand{\pertmat}{\Delta}
\DeclareMathOperator{\Tr}{Tr}
\renewcommand\Re{\operatorname{Re}}
\renewcommand\Im{\operatorname{Im}}
\newcommand\real[1]{\Re{(#1)}}
\newcommand{\Real}[1]{\Re{\left({#1}\right)}}
\newcommand{\algnote}[1]{\footnotesize \sc{Note: \it#1 } }
\renewcommand{\algorithmicensure}{\textbf{Output:}}
\newcommand{\algorithmicpurpose}[1]{
	\renewcommand{\algorithmicensure}{\textbf{Purpose:}}
	\ENSURE{#1}
	\renewcommand{\algorithmicensure}{\textbf{Output:}}
}
\def\bigo{{\mathcal O}}
\def\iu{\imagunit}
\def\eps{\varepsilon}
\def\iu{\imagunit}
\def\eps{\varepsilon}
\def\bigo{{\mathcal O}}
\newcommand{\aleps}{\alpha_\varepsilon}
\newcommand{\rhoeps}{\rho_\varepsilon}
\newcommand{\alepsKn}{\aleps^{\K,\norm{\cdot}}}
\newcommand{\alepsRn}{\aleps^{\R,\norm{\cdot}}}
\newcommand{\alepsRf}{\aleps^{\R,\normf{\cdot}}}
\newcommand{\rhoepsKn}{\rhoeps^{\K,\norm{\cdot}}}
\newcommand{\rhoepsRf}{\rhoeps^{\R,\normf{\cdot}}}
\newcommand{\Hinf}{H_\infty}
\newcommand{\eigtfmat}{A+B\Delta(I-D \Delta)^{-1}C} 
\newcommand{\SVSReps}{\spec^\R_\eps}
\newcommand{\SVSRepsf}{\spec^{\R,\|\cdot\|\fro}_\eps}
\newcommand{\SVSRepss}{\spec^{\R,\|\cdot\|_2}_\eps}
\newcommand{\SVSCepsn}{\spec^{\C,\|\cdot\|}_\eps}
\newcommand{\SVSCepsf}{\spec^{\C,\|\cdot\|\fro}_\eps}
\newcommand{\SVSCepss}{\spec^{\C,\|\cdot\|_2}_\eps}
\newcommand{\SVSKepss}{\spec^{\K,\|\cdot\|_2}_\eps}
\newcommand{\SVSKepsn}{\spec^{\K,\|\cdot\|}_\eps}
\newcommand{\stabradKn}{r_{\mathbb K}^{\norm{\cdot}}} 
\newcommand{\SVSKepsf}{\spec^{\K,\|\cdot\|\fro}_\eps}
\newcommand{\stabradRf}{r_{\mathbb R}^{\normf{\cdot}}} 
\newcommand{\stabradCn}{r_{\mathbb C}^{\norm{\cdot}}} 
\newcommand{\stabradRs}{r_{\mathbb R}^{\norms{\cdot}}} 
\newcommand{\SVSKzeron}{\spec_0^{\K,\|\cdot\|}}
\newcommand{\epslb}{\eps_{\mathrm{LB}}}
\newcommand{\epsub}{\eps_{\mathrm{UB}}}
\newcommand{\mur}{\mu_{\R}^{\norm{\cdot}}}
\newcommand{\murs}{\mu_{\R}^{\norms{\cdot}}}
\newcommand{\murf}{\mu_{\R}^{\normf{\cdot}}}
\newcommand{\muk}{\mu_{\K}^{\norm{\cdot}}}
\newcommand{\spec}{\sigma} 
\newcommand{\zlam}{\lambda} 
\newcommand{\norm}[1]{\left\lVert#1\right\rVert}
\newcommand{\normf}[1]{\norm{#1}\fro}
\newcommand{\norms}[1]{\norm{#1}_2}
\newcommand{\transsym}{\mathsf{T}}
\newcommand{\tp}[1][]{^{{#1}\transsym}}
\newcommand{\frosym}{\mathrm{F}}
\newcommand{\fro}{_\frosym}
\newcommand{\Reuv}{\Re\left(uv^*\right)}
\newcommand{\Reuvt}{\Re\left(u(t)v(t)^*\right)}
\newcommand{\Reuvo}{\Re\left(u_0 v_0^*\right)}
\newcommand{\Reuvk}{\Re\left(u_k v_k^*\right)}
\newcommand{\olhp}{\C_-} 
\newcommand{\cuhp}{H_{+=}} 
\newcommand{\oud}{\D_-} 
\newcommand{\epsstar}{\eps_\star}
\newcommand{\granso}{{\sc granso}}
\newcommand{\mytilde}{\raise.17ex\hbox{$\scriptstyle\mathtt{\sim}$}}
\newcommand{\matlab}{{\sc Matlab}}
\title{Approximating the Real Structured Stability Radius with Frobenius Norm Bounded Perturbations}
\author{N.~Guglielmi\thanks{Dipartimento di Matematica
Pura e Applicata, Universit\`a dell'Aquila, via Vetoio (Coppito),
I-$67010$ L'Aquila, Italy (guglielm@univaq.it). Supported in part
by the Italian Ministry of Education, Universities and Research
(M.I.U.R.) and by Istituto Nazionale di Alta Matematica - Gruppo Nazionale
 di Calcolo Scientifico (INdAM-G.N.C.S).\ }
\and M.~G\"urb\"uzbalaban\thanks{Department of Management Science and Information Systems, Rutgers University, New Brunswick, NJ, USA (mgurbuzbalaban@business.rutgers.edu)}
\and T.~Mitchell\thanks{
Max Planck Institute for Dynamics of Complex Technical Systems, Magdeburg, 39106 Germany (mitchell@mpi-magdeburg.mpg.de). Previously supported by
National Science Foundation Grant DMS-1317205 at New York University.}
\and M.~L.~Overton\thanks{Courant Institute of
Mathematical Sciences, New York University, 251 Mercer Street, New
York, NY 10012, USA (overton@cims.nyu.edu). Supported in part by
National Science Foundation Grant DMS-1620083.}
}
\date{Dec. 30th, 2016}
\begin{document} 
\maketitle

\begin{abstract}
We propose a fast method to approximate the real stability radius of a linear dynamical system
with output feedback, where the perturbations are restricted to be real valued and bounded with respect to the Frobenius norm.  Our work builds on a number of scalable algorithms that have been proposed in recent years, ranging from methods that approximate the complex or real pseudospectral abscissa and radius of large sparse matrices (and generalizations of these methods for pseudospectra to spectral value sets) to algorithms for approximating the complex stability radius (the reciprocal of the $H_\infty$ norm).  Although our algorithm is guaranteed to find only upper bounds to the real stability radius, it seems quite effective in practice.  As far as we know, this is the first algorithm that addresses the Frobenius-norm version of this problem.  
Because the cost mainly consists of computing the eigenvalue with maximal real part for continuous-time systems (or modulus for discrete-time systems) of a sequence of matrices, our algorithm remains very efficient for large-scale systems provided that the system matrices are sparse.

\end{abstract}

\section{Introduction}
\label{sec:intro}

Consider a linear time-invariant dynamical system with output feedback defined, for continuous-time systems, by matrices 
$A \in \Rmn{n}{n}$, $B \in \Rmn{n}{p}$, $C \in \Rmn{m}{n}$ and $D \in \Rmn{m}{p}$ as

\begin{align}
\dot{x} & =  Ax + Bw
\label{eqn:AB}
\\
z & =  Cx + Dw
\label{eqn:CD}
\end{align}
where $w$ is a disturbance feedback depending linearly on the output $z$ \cite[p.~538]{HiPr05}.
For simplicity, we restrict our attention to continuous-time systems for most of the paper,
but we briefly explain how to extend our results and methods to discrete-time systems in Section \ref{sec:discrete}.

The real stability radius, sometimes called the real structured stability radius, is a well known 
quantity for measuring robust stability  of linear dynamical systems with output feedback
\cite{HinPri90a,HinPri90b,Hinrichsen93stabilityradii,HiPr05,ZhoGloDoy95,Kar03}. It measures stability under a certain class of real perturbations where the size of the perturbations are measured by a given norm $\norm{\cdot}$. Most of the literature has focused on spectral norm bounded perturbations for which there exists a characterization in terms of an explicit formula \cite{QiuEtAl95} and a level-set algorithm \cite{DoorenRealStabRad96}. This algorithm has been proven to be convergent; however, it is not practical for systems where large and sparse matrices arise as it requires a sequence of Hamiltonian eigenvalue decompositions, each with a complexity of $\bigo(n^3)$. 

As an alternative to the spectral norm, Frobenius-norm bounded perturbations have also been of interest to the control community \cite{LeeStab96,BoukasShi99,BoukasShi98,BulRealStabRad01,BobRealStabRad99, BobRealStabRad01}. It has been argued that the Frobenius norm is easier to compute and is more advantageous to consider in certain types of control systems \cite{BobRealStabRad99, BobRealStabRad01},
admitting natural extensions to infinite-dimensional systems \cite{BulRealStabRad01}. In the special case $B=C=I, D=0$, there exists an algorithm  \cite{BobRealStabRad99, BobRealStabRad01} that gives upper and lower bounds for the Frobenius-norm bounded real stability radius; however, there is no algorithm to our knowledge that is applicable in the general case. Indeed, \cite{BeVo13} describes this as an unsolved research problem. In this paper, we present the first method to our knowledge that provides good
approximations to the Frobenius-norm bounded real stability radius.  

Our method relies on two foundations. The first is the theory of spectral value sets associated with the dynamical system \eqref{eqn:AB}--\eqref{eqn:CD}
as presented in \cite[Chapter 5]{HiPr05}.  The second is the appearance of a 
number of recent iterative algorithms that find rightmost points of spectral value sets of various sorts, 
beginning with the special case of matrix pseudospectra (the case $B=C=I, D=0$) \cite{GugOve11}, followed by a related method for 
pseudospectra \cite{KressVander12} and extensions to real-structured pseudospectra \cite{GuLu13,GugMan14,Rost15,Gug16}, and to 
spectral value sets associated with \eqref{eqn:AB}--\eqref{eqn:CD} \cite{GuGuOv13,MitOve15} and 
related descriptor systems \cite{BeVo13}. 

The paper is organized as follows. In Section \ref{sec:fundamentals} we introduce spectral value sets,
establishing a fundamental relationship between the spectral value set abscissa and the stability
radius.  In Section \ref{sec:odeframework} we
introduce an ordinary differential equation whose equilibria are generically associated with rightmost
points of Frobenius-norm bounded real spectral value sets.
In Section \ref{sec:method} we present a practical iterative method to compute these points.
This leads to our method for approximating the Frobenius-norm bounded real stability radius, presented in 
Section~\ref{sec:stabrad}.  We outline extensions to discrete-time systems in Section~\ref{sec:discrete}, 
present numerical results in Section~\ref{sec:numerical}, and make concluding remarks in Section~\ref{sec:conclusion}.

\section{Fundamental Concepts} 
\label{sec:fundamentals}

Throughout the paper, $\norms{\cdot}$ denotes the matrix 2-norm (maximum singular value), whereas $\normf{\cdot}$ denotes the Frobenius norm (associated with the trace inner product). The usage $\norm{\cdot}$ means that the norm may be
either  $\norms{\cdot}$ or $\normf{\cdot}$, or both when they coincide, namely for vectors or rank-one matrices.
We use the notation $\olhp$ to denote the open \emph{left} half-plane $\{\zlam: \Re(\zlam)<0\}$ 
and $\cuhp$  to denote the closed \emph{upper} half-plane $\{\zlam: \Im(\zlam)\geq 0\}$.

\subsection{Spectral Value Sets and $\mu$-Values}
\label{subsec:specvalsets}

Given real matrices $A,B,C$ and $D$ defining the linear dynamical
system \eqref{eqn:AB}--\eqref{eqn:CD}, linear feedback $w=\Delta \, z$ leads to a \emph{perturbed system matrix}
with the linear fractional form
\beq\label{pertsysmatdef}
    M(\Delta) = \eigtfmat \quad \mathrm{for~} \Delta \in \K^{p\times m}
\eeq
where the field $\K$ is either $\R$ or $\C$.  Note that since
\beq
    \|D\Delta\|_2 \leq \|D\|_2\|\Delta\|_2  \leq \|D\|_2\|\Delta\|\fro 
 \label{normrelation}
\eeq
we can ensure that $M(\Delta)$ is well defined by assuming $\|\Delta\|\leq\eps$ and $\eps\|D\|_2 < 1$,
regardless of whether $\|\Delta\|$ is $\|\Delta\|_2$ or $\|\Delta\|\fro$.

\begin{defi}
Let $\eps\in \R$, with $\eps\|D\|_2 < 1.$  Define the \emph{spectral value set} with respect to the norm $\norm{\cdot}$ and the field $\K$ as
\beqs
            \SVSKepsn(A,B,C,D) = \bigcup \left\{\spec(M(\Delta)) : \Delta\in\K^{p\times m},
						\|\Delta\| \leq \eps\right\}.
\label{SVSepsR_spec}
\eeqs
\end{defi}
Here $\spec$ denotes spectrum.  Note that 
\[
    \SVSKepss(A,B,C,D)\supseteq \SVSKepsf(A,B,C,D) \supseteq \SVSKzeron(A,B,C,D) = \spec(A).
\]

It is well known that when $\K=\C$, the spectral value set can equivalently be defined as the set of
points $s\in\C$ for which the spectral norm, i.e., the largest singular value, of the transfer matrix 
\beqs
    G(s) = C(s I-A)^{-1}B+D
\eeqs
takes values at least $1/\eps$ \cite[Chap. 5]{HiPr05}. Furthermore, it is well known that 
\beq
	\SVSCepsn(A,B,C,D) = \bigcup \left\{\spec(M(\Delta)) : \Delta\in\Cmn{p}{m},
						\|\Delta\| \leq \eps \mathrm{~and~}\rank(\Delta)\leq 1 \right\}.
\label{C-rank-one}
\eeq
As a consequence of this rank-one property, it is clear that 
\[
     \SVSCepss(A,B,C,D)=\SVSCepsf(A,B,C,D).
\label{SVSC_same_both_norms}
\]
In contrast, when perturbations are restricted to be real, the inclusion 
\[
	\SVSRepss(A,B,C,D)\supseteq\SVSRepsf(A,B,C,D)
\]
is generically strict.  Instead of ordinary singular values, we must consider \emph{real structured singular values} or
\emph{real $\mu$-values}, that is with perturbations restricted to real matrices.
This topic is discussed at length in \cite[Section 4.4]{HiPr05},
allowing additional structure to be imposed on $\Delta$ beyond simply $\Delta\in\K^{p\times m}$, and treating
a general class of operator norms, including the spectral norm, but not, however, the Frobenius norm. See also \cite[Chapter 6] {Kar03}.

\begin{defi} \label{def:muvalue}
The $\mu$-value of a matrix $H \in \Cmn{m}{p}$ with respect to the field $\K$ and the norm $\norm{\cdot}$ is defined by
\beq
\muk(H) = \left[ \inf \left\lbrace \norm{\Delta} ~:~  \Delta \in\K^{p \times m} ,~\det(I-H \Delta) = 0 \right\rbrace \right]^{-1}.
\eeq
\end{defi}
We use the convention that taking the infimum over the empty set always
yields $\infty$ and that $\infty^{-1}=0$, so that $\muk(0)^{-1}=\infty$.
Definition \ref{def:muvalue} defines the \emph{real $\mu$-value} when $\K = \R$
for complex matrices $H$ as well as real matrices.
 
The following lemma is well known in the case of the spectral norm, where it
is usually known as the Eckart-Young theorem.  
See \cite[Lemma 1]{QiuEtAl95} and \cite[Prop. 4.4.11]{HiPr05} for extensions to other structures and other operator
norms.  A key point to note here is that while this result holds both for $\K=\C$ and $\K=\R$, it does \emph{not} hold
for the real $\mu$ value when $H$ is complex. 

\begin{lemm}\label{lem:mu-D-equality} Let $H\in\K^{m \times p}$ and let $\norm{\cdot}$ be either $\norms{\cdot}$ or $\normf{\cdot}$. 
Then, $ \muk(H) = \|H\|_2.$
\end{lemm}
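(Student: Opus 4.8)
The plan is to prove the two inequalities $\muk(H)\le\|H\|_2$ and $\muk(H)\ge\|H\|_2$ separately. Writing $\sigma := \|H\|_2$ and unwinding Definition~\ref{def:muvalue}, this amounts to showing
\[
\inf\bigl\{\norm{\Delta} : \Delta\in\K^{p\times m},\ \det(I-H\Delta)=0\bigr\} = \tfrac{1}{\sigma},
\]
with the convention that both sides equal $+\infty$ when $\sigma=0$. The argument will be uniform in the two choices of $\norm{\cdot}$ and in the field $\K$; note that when $\K=\R$ the hypothesis $H\in\K^{m\times p}$ forces $H$ to be real.

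First I would dispose of the degenerate case: if $\sigma=0$ then $H=0$, so $\det(I-H\Delta)=\det(I)=1\neq0$ for every $\Delta$, the infimum is over the empty set, and $\muk(0)=\infty^{-1}=0=\|0\|_2$. Assume now $\sigma>0$. For the bound $\muk(H)\le\sigma$ I would show $\norm{\Delta}\ge 1/\sigma$ for every admissible $\Delta$: if $\det(I-H\Delta)=0$, then $1$ is an eigenvalue of the $m\times m$ matrix $H\Delta$, so its spectral radius is at least $1$; since the spectral radius is dominated by the $2$-norm and the $2$-norm is submultiplicative,
\[
1\le\|H\Delta\|_2\le\|H\|_2\,\|\Delta\|_2=\sigma\|\Delta\|_2\le\sigma\norm{\Delta},
\]
where the last step uses $\|\Delta\|_2\le\normf{\Delta}$ in the Frobenius case (with equality in the spectral case). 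Dividing by $\sigma$ gives $\norm{\Delta}\ge 1/\sigma$ for all admissible $\Delta$, hence $\muk(H)\le\sigma$.

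For the reverse inequality $\muk(H)\ge\sigma$ I would exhibit a minimizing perturbation. Taking a singular value decomposition of $H$, let $u$ and $v$ be left and right singular vectors associated with $\sigma$, normalized so that $\norm{u}=\norm{v}=1$ and $Hv=\sigma u$; when $\K=\R$ these may be chosen real, so $u\in\R^m$, $v\in\R^p$. Set $\Delta := \sigma^{-1} v u^{*}\in\K^{p\times m}$. Then $H\Delta = \sigma^{-1}(Hv)u^{*} = uu^{*}$, whence $H\Delta\,u = u$, so $1$ is an eigenvalue of $H\Delta$ and $\det(I-H\Delta)=0$. Moreover $\Delta$ is rank one with single nonzero singular value $\sigma^{-1}$, so $\|\Delta\|_2=\normf{\Delta}=1/\sigma$. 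Thus the infimum is attained and equals $1/\sigma$, giving $\muk(H)\ge\sigma$ and completing the proof.

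There is no substantive obstacle here; the only points worth flagging are that a \emph{single} rank-one $\Delta$ certifies the upper bound for both norms simultaneously (because the spectral and Frobenius norms coincide on rank-one matrices), and that in the real case the singular vectors can be taken real so that $\Delta$ is a genuine real perturbation. The failure mode alluded to in the surrounding text — the real $\mu$-value of a \emph{complex} $H$ — does not occur precisely because $H\in\K^{m\times p}$ is real when $\K=\R$.
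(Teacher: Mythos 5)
Your proof is correct and follows essentially the same route as the paper: the upper bound on $\muk(H)$ via submultiplicativity together with $\|\Delta\|_2\le\normf{\Delta}$, and the lower bound via an explicit rank-one perturbation $\sigma^{-1}vu^*$ built from the singular vectors of $\sigma_1(H)$, which is real when $H$ is real. The only difference is presentational — you phrase the first direction through the spectral radius of $H\Delta$ rather than citing the norm inequality directly — so there is nothing substantive to add.
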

\begin{proof}
If $H=0$, the result is clear. Assume $H\neq 0$. If $ \|\Delta\| < \|H\|^{-1}_2 $, then we have from \eqref{normrelation}
that $\det(I-H\Delta)\neq0$. This shows that $\muk(H)\leq \|H\|_2$.  For the reverse inequality,
let $H$ have the singular value decomposition $U\Sigma V\tp$ where $U$ and $V$ are unitary and
$$\Sigma = \diag\{\sigma_1(H),\sigma_2(H),\ldots,\sigma_{\min\{p,m\}}(H)\}$$
is a diagonal matrix with singular values on the diagonal in descending order by magnitude. Define
$$\Delta = V \diag \{ \sigma_1(H)^{-1},0,\ldots,0 \}U\tp.$$
Then $\|\Delta\|\fro = \|\Delta\|_2= \|H\|_2^{-1}$ and $\det(I-H\Delta)=0$. Furthermore, if $\K=\R$ then
since $H$ is real, $\Delta$ is also real. This shows that $\muk(H)\geq \|H\|_2$.
\end{proof}

Combining \cite[Lemma 5.2.7]{HiPr05} with Lemma \ref{lem:mu-D-equality} results in the following corollary.  This may be compared
with  \cite[Corollary 5.2.8]{HiPr05}, which treats a more general class of structured perturbations, but is restricted to operator norms.
\begin{coro}\label{coro:eps-as-mu-levelsets}
Let $\norm{\cdot}$ be either $\norms{\cdot}$ or $\normf{\cdot}$.
Let $s\in \C\backslash\spec(A)$ and $ \norm{D} < \muk\left(G(s)\right)$ and define
$$ \eps(s) = \min \left\lbrace \norm{\Delta} ~:~ \Delta \in \K^{p \times m}, s \in \spec(M(\Delta)) \right\rbrace.$$
Then, 
$$ \eps(s) = \left (\muk\left(G(s)\right)\right)^{-1}.$$
\end{coro}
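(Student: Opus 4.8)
\noindent The plan is to reduce the corollary to Definition~\ref{def:muvalue} via the standard spectral characterization of perturbed eigenvalues, and then to verify that the minimizer this produces does not violate the well-definedness requirement on $M(\Delta)$. The engine is \cite[Lemma~5.2.7]{HiPr05}: for $s\in\C\backslash\spec(A)$ and $\Delta\in\K^{p\times m}$ with $\det(I-D\Delta)\neq0$ (so that $M(\Delta)$ is well defined), $s\in\spec(M(\Delta))$ if and only if $\det(I-G(s)\Delta)=0$. I would either invoke this directly or derive it from the Schur-complement factorization $\det(sI-M(\Delta))=\det(sI-A)\,\det(I-G(s)\Delta)\,\det(I-D\Delta)^{-1}$, obtained by writing $sI-M(\Delta)=(sI-A)\bigl(I-(sI-A)^{-1}B\Delta(I-D\Delta)^{-1}C\bigr)$, applying Sylvester's identity $\det(I-XY)=\det(I-YX)$ to move $C$ past $B$, and simplifying with $C(sI-A)^{-1}B=G(s)-D$; since $\det(sI-A)\neq0$ and $\det(I-D\Delta)\neq0$, the equivalence follows. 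Consequently the feasible set in the definition of $\eps(s)$ is exactly $\{\Delta\in\K^{p\times m}:\det(I-D\Delta)\neq0,\ \det(I-G(s)\Delta)=0\}$, which is contained in the set $\{\Delta\in\K^{p\times m}:\det(I-G(s)\Delta)=0\}$ of Definition~\ref{def:muvalue}; hence $\eps(s)\geq\muk(G(s))^{-1}$.

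For the reverse inequality, observe first that the hypothesis $\norm{D}<\muk(G(s))$, together with $\norm{D}\geq0$, forces $\muk(G(s))>0$, so $\muk(G(s))^{-1}<\infty$ and the set $\{\Delta\in\K^{p\times m}:\det(I-G(s)\Delta)=0\}$ is nonempty; being closed, its intersection with a closed ball is compact, so there is a $\Delta_0$ attaining $\norm{\Delta_0}=\muk(G(s))^{-1}$ with $\det(I-G(s)\Delta_0)=0$. It remains to check that $I-D\Delta_0$ is nonsingular, and this is where Lemma~\ref{lem:mu-D-equality} enters: applied to the real matrix $D$ it gives $\muk(D)=\norms{D}$, and since $\norms{D}\leq\norm{D}<\muk(G(s))$ we obtain $\norm{\Delta_0}=\muk(G(s))^{-1}<\muk(D)^{-1}$; by Definition~\ref{def:muvalue}, a perturbation of norm strictly below $\muk(D)^{-1}$ cannot make $I-D\Delta$ singular, so $\det(I-D\Delta_0)\neq0$. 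Hence $M(\Delta_0)$ is well defined and, by the characterization above, $s\in\spec(M(\Delta_0))$, so $\Delta_0$ is feasible for $\eps(s)$; therefore $\eps(s)\leq\norm{\Delta_0}=\muk(G(s))^{-1}$, and combined with the first bound this gives $\eps(s)=\muk(G(s))^{-1}$, the minimum being attained at $\Delta_0$.

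The only genuinely delicate point is the interaction between the $\mu$-value constraint $\det(I-G(s)\Delta)=0$ and the well-definedness constraint $\det(I-D\Delta)\neq0$: one must show the latter is inactive at a minimizer of the former, which is precisely what the hypothesis $\norm{D}<\muk(G(s))$ --- through Lemma~\ref{lem:mu-D-equality} applied to $D$ --- provides. I would also emphasize that the proof uses no closed-form expression for $\muk(G(s))$ itself (none is available when $\K=\R$ and $G(s)$ is genuinely complex), only the elementary compactness needed to attain the defining infimum.
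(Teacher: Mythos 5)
Your proof is correct and follows essentially the same route the paper intends: it is exactly the combination of \cite[Lemma~5.2.7]{HiPr05} (to identify the feasible set with $\{\Delta:\det(I-G(s)\Delta)=0\}$ up to the well-definedness constraint) with Lemma~\ref{lem:mu-D-equality} applied to $D$ (to show that constraint is inactive at the minimizer), which is all the paper offers by way of proof. Your additional attention to attainment of the infimum and to the case $\muk(G(s))>0$ is a welcome filling-in of details the paper leaves implicit.
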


This leads to the following theorem which may be compared to \cite[Theorem 5.2.9]{HiPr05}, which again
does not treat the Frobenius norm.
\begin{theo}\label{theo:svsa-vs-real-mu} Let $\norm{\cdot}$ be either $\norms{\cdot}$ or $\normf{\cdot}$.
Suppose $\eps>0$ and $\eps\norms{D}<1$. Then
\beqs
    \SVSKepsn(A,B,C,D) = \spec(A) ~\bigcup~ \left\lbrace s\in \C\backslash\spec(A) ~:~ \muk \left(G(s)\right) \geq \eps^{-1}   \right\rbrace.
\eeqs
\end{theo}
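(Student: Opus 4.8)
The plan is to prove the two set inclusions separately, the engine in both directions being a determinant identity that converts the eigenvalue condition $s\in\spec(M(\Delta))$ into the condition $\det(I-G(s)\Delta)=0$ appearing in Definition~\ref{def:muvalue}. First I would record that identity: for $s\notin\spec(A)$ and $\Delta\in\K^{p\times m}$ with $\det(I-D\Delta)\neq0$, factoring $sI-M(\Delta)=(sI-A)\bigl(I-(sI-A)^{-1}B\,\Delta(I-D\Delta)^{-1}C\bigr)$, using $\det(I-XY)=\det(I-YX)$ repeatedly together with $\Delta(I-D\Delta)^{-1}=(I-\Delta D)^{-1}\Delta$, and collecting terms yields
\beqs
  \det\bigl(sI-M(\Delta)\bigr)=\frac{\det(sI-A)}{\det(I-D\Delta)}\,\det\bigl(I-G(s)\Delta\bigr),
\eeqs
essentially \cite[Lemma 5.2.7]{HiPr05}. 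Since the two prefactors are nonzero, this says that for $s\notin\spec(A)$ with $M(\Delta)$ well defined, $s\in\spec(M(\Delta))$ if and only if $\det(I-G(s)\Delta)=0$.

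For the inclusion of $\SVSKepsn(A,B,C,D)$ in the set on the right-hand side, I would take $s$ in the spectral value set; if $s\in\spec(A)$ there is nothing to prove, so I would suppose $s\notin\spec(A)$ and pick $\Delta$ with $\norm{\Delta}\le\eps$ and $s\in\spec(M(\Delta))$. By \eqref{normrelation} and $\eps\norms{D}<1$, $\norms{D\Delta}<1$, so $M(\Delta)$ is well defined and the identity gives $\det(I-G(s)\Delta)=0$; hence $\norm{\Delta}$ is one of the values over which the infimum in Definition~\ref{def:muvalue} is taken, so $\muk(G(s))^{-1}\le\norm{\Delta}\le\eps$, i.e.\ $\muk(G(s))\ge\eps^{-1}$.

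For the reverse inclusion I would first note $\spec(A)=\SVSKzeron(A,B,C,D)\subseteq\SVSKepsn(A,B,C,D)$ (take $\Delta=0$), and then fix $s\notin\spec(A)$ with $\muk(G(s))\ge\eps^{-1}>0$. Since $\muk(G(s))\neq0$, the set $\{\Delta\in\K^{p\times m}:\det(I-G(s)\Delta)=0\}$ is nonempty, and being closed it contains, by a routine compactness argument, a norm-minimizer $\Delta_0$ with $\norm{\Delta_0}=\muk(G(s))^{-1}\le\eps$. Then by \eqref{normrelation}, $\norms{D\Delta_0}\le\norms{D}\norm{\Delta_0}\le\eps\norms{D}<1$, so $M(\Delta_0)$ is well defined, and the identity yields $\det(sI-M(\Delta_0))=0$, i.e.\ $s\in\spec(M(\Delta_0))\subseteq\SVSKepsn(A,B,C,D)$. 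The hard part is not any single step but keeping track of one point: the norm-minimizing $\Delta_0$ furnished by the $\mu$-value must automatically make $M(\Delta_0)$ well defined, and this is exactly what the hypothesis $\eps\norms{D}<1$ buys, via $\norm{\Delta_0}\le\eps$ and \eqref{normrelation}. One could instead try to route the argument through Corollary~\ref{coro:eps-as-mu-levelsets}, but its hypothesis is stated with $\norm{D}$ in the chosen norm, and in the Frobenius case $\normf{D}$ need not be below $\muk(G(s))$ even when $\norms{D}$ is, so I would prefer to argue directly from the determinant identity.
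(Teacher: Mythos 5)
Your proof is correct and follows essentially the same route as the paper: both directions rest on the determinant identity $\det(sI-M(\Delta))=\det(sI-A)\det(I-G(s)\Delta)/\det(I-D\Delta)$ (i.e.\ \cite[Lemma 5.2.7]{HiPr05}), with the reverse inclusion using attainment of the infimum defining $\muk(G(s))$ — the paper cites Corollary~\ref{coro:eps-as-mu-levelsets} for this where you inline a compactness argument. Your closing observation is a fair one: the paper verifies $\muk(G(s))>\norms{D}$ before invoking a corollary whose hypothesis is stated with $\norm{D}$ in the chosen norm, and your direct argument via \eqref{normrelation} (which only ever needs $\norms{D}$) cleanly sidesteps that minor mismatch in the Frobenius case.
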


\begin{proof} 
Suppose $s \in \sigma(M(\Delta)) ~\cap~ \left\lbrace s \in \C\backslash\spec(A)\right\rbrace$, $\Delta \in \K^{p \times m}$ and $\norm{\Delta} \leq \eps$. By \cite[Lemma 5.2.7]{HiPr05}, we have $\left(\muk(G(s))\right)^{-1} \leq \norm{\Delta}\leq \eps$. Conversely, if $s\in \C\backslash\spec(A)$ and $\left(\muk\left(G(s)\right)\right) \geq \eps^{-1}$, then we have $\muk\left(G(s)\right) > \norms{D}$ and by Corollary \ref{coro:eps-as-mu-levelsets}, there exists $\Delta$ with $\norm{\Delta} = \left(\muk\left(G(s)\right)\right)^{-1} = \eps $ 
such that $s \in \spec(M(\Delta))$.
\end{proof}

Note that even when $A,B,C,D$ are real, the transfer function $G(s)$ is normally complex for $s\not\in\R$ so
it is not generally the case that $\mur(G(s)) = \norms{G(s)}$.
For real spectral value sets defined by the spectral norm, the optimal perturbation that appears in Definition \ref{def:muvalue} of the $\mu$-value can in fact always be chosen to have rank at most two \cite[Section 2]{QiuEtAl95}, leading to the formula
\beqs
            \SVSRepss(A,B,C,D) = \bigcup \left\{\spec(M(\Delta)) : \Delta\in\Rmn{p}{m},
						\|\Delta\|_2 \leq \eps\mathrm{~and~}\rank(\Delta)\leq 2 \right\}.
\label{R-rank-two}
\eeqs
We make the assumption in this paper that the same property holds for the Frobenius norm, but, for brevity,
we leave a detailed justification of this to future work.

\subsection{The Stability Radius}
\label{subsec:stabrad}
Because we are focusing on the continuous-time dynamical system \eqref{eqn:AB}--\eqref{eqn:CD}, the stability
region of interest is the open left half-plane $\olhp$.  We say that $A$ is \emph{stable} if
$\spec(A)\in\olhp$, in which case, for sufficiently small $\eps$, the spectral value set $\SVSKepsn(A,B,C,D)$ is also in $\olhp$.  
The stability radius $\stabradKn$ measures the size of  the minimal perturbation that destabilizes the matrix or results 
in $M(\Delta)$ being undefined \cite[Def.~5.3.1]{HiPr05}.

\begin{defi} The stability radius $\stabradKn$ is defined with respect to the field $\K$ and the norm $\norm{\cdot}$ as
\[
\stabradKn(A,B,C,D) = \inf \{ \| \Delta \| :\Delta\in\K^{p\times m} , \det(I-D \Delta)=0 ~ \mbox{or} ~ \sigma(M(\Delta)) \not\subset\olhp \}.
\]
\label{def:real-stab-rad-orig}
\end{defi}

The characterization 
\beq\label{def:real-stab-rad}
\stabradKn(A,B,C,D) =  \min \left( \left[\mu_\K^{\norm{\cdot}}(D)\right]^{-1}, \inf_{\omega \in \R} \left[ \mu_{\K}^{\norm{\cdot}}\left(G(\iu \omega)\right)\right]^{-1} \right) 
\eeq 
is well known for operator norms \cite[Theorem 5.3.3]{HiPr05}.  Corollary \ref{coro:eps-as-mu-levelsets} and Lemma \ref{lem:mu-D-equality}
extend \cite[Theorem 5.3.3]{HiPr05} beyond operator norms to the Frobenius norm leading to a similar formula 
\beq\label{equiv:real-stab-rad}
     \stabradKn(A,B,C,D)  = \min \left( \norms{D}^{-1}, \inf_{\omega \in \R} \left[ \mu_{\K}^{\norm{\cdot}}\left(G(\iu \omega)\right)\right]^{-1} \right).
\eeq
\begin{rema}\label{rema-upper-semicont} As the $\mu_{\R}^{\norm{\cdot}}$ function is upper semi-continuous both for operator norms and the Frobenius norm (see \cite[Lemma 1.7.1]{Kar03}), we have $\lim_{| \omega | \to  \infty } G(iw) $ $= D$ but 
    \beq \liminf_{| \omega | \to\infty} \left[ \mu_{\K}^{\norm{\cdot}}\left(G(\iu \omega)\right)\right]^{-1} \geq  \left[\mu_\K^{\norm{\cdot}}(D)\right]^{-1} = \norms{D}^{-1}
		\label{upper-semicont-mu}   
   \eeq
with a possible strict inequality (see \cite[Remark 5.3.17 (i)]{HiPr05} and \cite[Example 5.3.18]{HiPr05} for an example with $p=1$). Therefore, when $D \neq 0$, we cannot eliminate the first term in \eqref{equiv:real-stab-rad}. Either $\stabradKn(A,B,C,D) = \norms{D}^{-1}$ or the infimum in \eqref{equiv:real-stab-rad} is strictly less than $\norms{D}^{-1}$ in which case it has to be attained at a finite $\omega$; otherwise, we would obtain a contradiction as $|\omega | \to \infty$ by the inequality \eqref{upper-semicont-mu}. However, in the special case when $D=0$, we can interpret $ \left[\mu_\K^{\norm{\cdot}}(D)\right]^{-1} = \|D\|_2^{-1}=\infty$ (see the paragraph after Definition \ref{def:muvalue}) and dispense with the first term in \eqref{equiv:real-stab-rad}. 
 \end{rema}

In the complex case $\K=\C$, the spectral norm and Frobenius norms define the same stability radius $\stabradCn$.
In this case also we can eliminate the first term in \eqref{equiv:real-stab-rad}, since \eqref{upper-semicont-mu} holds
with equality, and the second term is simply the reciprocal of the $\Hinf$ norm of the transfer matrix $G$ on the boundary 
of the stability region.
The standard method to compute it is the Boyd-Bala\-krish\-nan-Bru\-insma-Stein\-buch (BBBS)
algorithm \cite{Boba90,BruSte90}. This algorithm is globally and quadratically convergent, but is not practical when 
$n$ is large due to its computational complexity: it requires repeated computation of all eigenvalues of
$2n\times 2n$ Hamiltonian matrices. The first constructive formula to compute $\murs$ and hence $\stabradRs$, the real $\mu$ value and the real stability radius
for the spectral norm, was given in \cite{QiuEtAl95};
this led to a practical level-set algorithm \cite{DoorenRealStabRad96}.  However, this is significantly more involved than the
BBBS algorithm and hence is also impractical in the large-scale case. To our knowledge, no efficient algorithm to compute $\stabradRs$ is known when $n$ is large.  
As noted in the introduction, much less attention
has been given to the Frobenius-norm case, though it is clearly of interest in applications. As far as we know, no constructive
method has been given to approximate $\murf$ or $\stabradRf$, even if $n$ is small.

\subsection{The Spectral Value Set Abscissa} \label{subsec:specvalsetabsc}
The spectral value set abscissa measures how far the spectral value set extends rightwards into the complex plane
for a prescribed value of $\eps$. 
\begin{defi}
For $\eps\geq 0$, $\eps\|D\|_2<1$, the \emph{spectral value set abscissa} (w.r.t.\ the norm $\|\cdot\|$ and the field $\K$) is
\beq  
         \alepsKn(A,B,C,D) =  \max\{\mathrm{Re}~\zlam : \zlam \in \SVSKepsn(A,B,C,D)\} \label{alepsdef}
\eeq
with $\alpha^{\K,\|\cdot\|}_0(A,B,C,D)=\alpha(A)$, the spectral abscissa of $A$. 
\end{defi}
In the case $\K=\R$, $\alepsRn(A,B,C,D)$ is
called the \emph{real spectral value set abscissa}. 

\begin{defi}\label{def:locright}
A \emph{rightmost}  
point of a set $S\subset\C$ is a point where the maximal value of 
the real part 
of the points in $S$ is attained. A \emph{locally rightmost}  
point of a set 
$S\subset\C$ is a point $\zlam$ which is a
rightmost 
point of $S\cap{\mathcal N}$ for some neighborhood ${\mathcal N_\delta}=\{s:|s-\zlam|<\delta\}$ of $\zlam$
with $\delta>0$.
\end{defi}

\begin{rema}
Since $\SVSKepsn(A,B,C,D)$ is compact, its rightmost points, that is the maximizers of the
optimization problem in \eqref{alepsdef} 
lie on its boundary. When $\K=\C$, there can only be a finite number 
of these \cite[Remark 2.14]{GuGuOv13}. However, when $\K=\R$, there can be an infinite number of points on the boundary with 
the same real part, 
as shown by the following example. 

\begin{example}
\label{mertexample2}
Let
\[
A =
\left(
\begin{array}{cc}
  0 &  -1  \\
  1 &  ~~0     
\end{array}
\right),
\quad
B = \left(
\begin{array}{l}
0  \\
1 
\end{array}
\right), 
\quad
C = \left(
\begin{array}{cc}
1 & 0 
\end{array}
\right), 
\quad
D = 0.
\]
For $\eps\in(0,1)$, $\SVSReps(A,B,C,D)$ consists of two line segments on the imaginary axis, which merge into one line segment
$\left [\frac{\sqrt{2}}{2}\iu, -\frac{\sqrt{2}}{2}\iu\right ]$ when $\eps=1$.
\end{example} 
\end{rema}

\begin{rema}\label{rema:realsym}
Since $A,B,C,D$ are real, $\SVSKepsn(A,B,C,D)$ is symmetric with respect to the real axis, so
without loss of generality, when we refer to a rightmost 
$\zlam$ in $\SVSKepsn$, we imply that 
$\zlam\in\cuhp$, the closed upper half-plane,
and when we say that the rightmost point $\zlam$ is unique, we mean considering only points in $\cuhp$. 
The same convention applies to the spectrum, so that a rightmost  
eigenvalue is understood to be in $\cuhp$.
\end{rema}

There is a key relationship between the spectral value set abscissa and the stability radius
that is a consequence of Theorem \ref{theo:svsa-vs-real-mu}:
\begin{coro}
\begin{align}
\label{def:real-stab-rad-2}
\stabradKn(A,B,C,D) &=  \inf \left\lbrace \eps :~ \eps\|D\|_2 < 1 \mbox{~or~} \alepsKn(A,B,C,D) \geq 0 \right\rbrace
\\
\label{def:real-stab-rad-3}
                                 &=  \min \left( \norms{D}^{-1}, \inf \left\lbrace \eps :~ \alepsKn(A,B,C,D) \geq 0 \right\rbrace \right). 
\end{align}
\end{coro}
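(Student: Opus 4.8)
The plan is to derive \eqref{def:real-stab-rad-2} directly from Theorem~\ref{theo:svsa-vs-real-mu} and Definition~\ref{def:real-stab-rad-orig}, and then obtain \eqref{def:real-stab-rad-3} from \eqref{def:real-stab-rad-2} together with the characterization \eqref{equiv:real-stab-rad} and the semicontinuity discussion in Remark~\ref{rema-upper-semicont}. First I would unpack Definition~\ref{def:real-stab-rad-orig}: the stability radius is the infimum of $\norm{\Delta}$ over $\Delta$ that either make $M(\Delta)$ undefined (i.e.\ $\det(I-D\Delta)=0$) or push some eigenvalue of $M(\Delta)$ out of $\olhp$. The key observation is that for a fixed $\eps$ with $\eps\norms{D}<1$, the set $\{\Delta : \norm{\Delta}\leq\eps\}$ contains no destabilizing perturbation precisely when $\SVSKepsn(A,B,C,D)\subset\olhp$, which by the definition of the spectral value set abscissa means $\alepsKn(A,B,C,D)<0$. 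Conversely, if $\eps\norms{D}\geq 1$ then (using \eqref{normrelation} with a rank-one $\Delta$ attaining $\norms{D\Delta}=\norms{D}\norm{\Delta}$, as in the proof of Lemma~\ref{lem:mu-D-equality}) there is a perturbation of norm exactly $\norms{D}^{-1}\le\eps$ with $\det(I-D\Delta)=0$. Hence the set of ``bad'' $\eps$ — those admitting a destabilizing or ill-defining perturbation of norm $\le\eps$ — is exactly $\{\eps : \eps\norms{D}\ge 1\}\cup\{\eps : \alepsKn(A,B,C,D)\ge 0\}$ (with a small care about the boundary case $\alepsKn = 0$, where a perturbation attaining a rightmost point on the imaginary axis exists by Theorem~\ref{theo:svsa-vs-real-mu}), and $\stabradKn$ is its infimum; negating the first condition gives \eqref{def:real-stab-rad-2}.

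For \eqref{def:real-stab-rad-3}, I would argue that the infimum of a union is the minimum of the two infima: $\inf\{\eps : \eps\norms{D}<1\}$ — interpreted correctly — is not literally what appears, so instead I would split into cases. Let $r_1 = \norms{D}^{-1}$ (with $r_1 = \infty$ if $D=0$) and $r_2 = \inf\{\eps : \alepsKn(A,B,C,D)\ge 0\}$. From \eqref{def:real-stab-rad-2}, $\stabradKn$ is the infimum of all $\eps$ lying in $[r_1,\infty)\cup[r_2,\infty)$ (monotonicity of $\alepsKn$ in $\eps$, which holds since the spectral value sets are nested, ensures the second set is indeed a half-line $[r_2,\infty)$ up to its endpoint). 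The infimum of $[r_1,\infty)\cup[r_2,\infty)$ is $\min(r_1,r_2)$, giving \eqref{def:real-stab-rad-3}. One should double-check consistency with \eqref{equiv:real-stab-rad}: Theorem~\ref{theo:svsa-vs-real-mu} says $\alepsKn\ge0$ iff there is $s\in\cuhp\cap(\imagunit\R)\setminus\spec(A)$ with $\muk(G(s))\ge\eps^{-1}$ (eigenvalues of $A$ are in $\olhp$ by stability, so they never contribute a rightmost point with nonnegative real part), i.e.\ iff $\eps\ge\left[\sup_{\omega}\muk(G(\imagunit\omega))\right]^{-1} = \inf_\omega[\muk(G(\imagunit\omega))]^{-1}$; thus $r_2 = \inf_{\omega\in\R}[\muk(G(\imagunit\omega))]^{-1}$, matching the second term of \eqref{equiv:real-stab-rad}, and \eqref{def:real-stab-rad-3} is just a restatement of \eqref{equiv:real-stab-rad}.

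The main obstacle is handling the boundary/attainment subtleties rather than any deep inequality. Specifically: (i) whether the infimum defining $r_2$ is attained and whether at $\eps=r_2$ one genuinely has a destabilizing perturbation of norm $\le r_2$ (needed so that the ``$\inf$'' in Definition~\ref{def:real-stab-rad-orig} matches an infimum over a closed-type condition); here compactness of $\SVSKepsn$ and Theorem~\ref{theo:svsa-vs-real-mu} guarantee that if $\alpha^{\K,\|\cdot\|}_{r_2} \ge 0$ a rightmost point on the imaginary axis is realized by some admissible $\Delta$, and if $\alpha^{\K,\|\cdot\|}_{r_2} < 0$ then for every $\eps > r_2$ slightly larger we already have $\alepsKn\ge0$, so $r_2$ is still the infimum; (ii) the degenerate case $D=0$, where $\norms{D}^{-1}$ must be read as $+\infty$ per the convention after Definition~\ref{def:muvalue}, so \eqref{def:real-stab-rad-2}--\eqref{def:real-stab-rad-3} reduce to $\stabradKn = r_2$ with the first constraint vacuous, consistent with Remark~\ref{rema-upper-semicont}; and (iii) making sure $\eps\norms{D}<1$ is maintained wherever we invoke Theorem~\ref{theo:svsa-vs-real-mu} — but this is automatic, since we only need the theorem for $\eps < r_1 = \norms{D}^{-1}$, exactly the regime where $M(\Delta)$ is well defined for all admissible $\Delta$. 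None of these requires new machinery; the proof is essentially a careful transcription of Theorem~\ref{theo:svsa-vs-real-mu} and Definition~\ref{def:real-stab-rad-orig} into the language of $\eps$-level sets.
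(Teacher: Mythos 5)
Your proof is correct, but it reaches the result by a different route than the paper. The paper's proof takes the $\mu$-value formula \eqref{equiv:real-stab-rad} as its starting point and simply shows that the infimum over $\eps$ in \eqref{def:real-stab-rad-3} and the infimum over $\omega$ in \eqref{equiv:real-stab-rad} are both attained and agree (via Theorem~\ref{theo:svsa-vs-real-mu}, continuity and monotonicity of $\eps\mapsto\alepsKn(A,B,C,D)$, and Remark~\ref{rema-upper-semicont}); it never returns to Definition~\ref{def:real-stab-rad-orig}. You instead go back to the original perturbation-based Definition~\ref{def:real-stab-rad-orig} and translate it into a level-set statement about $\eps$, using only the definition of $\SVSKepsn$ (a union of spectra), compactness, Lemma~\ref{lem:mu-D-equality} applied to $D$, and monotonicity. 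This is arguably more self-contained, since \eqref{equiv:real-stab-rad} is itself only asserted in the paper rather than proved in detail, and your argument does not actually need attainment of either infimum (the infimum of $(r_2,\infty)$ is still $r_2$), whereas the paper invokes continuity precisely to get attainment. Two small remarks: first, as literally written the condition in \eqref{def:real-stab-rad-2} should read $\eps\norms{D}\geq 1$ rather than $\eps\norms{D}<1$ (otherwise the infimum is trivially $0$); your ``bad set'' $\{\eps:\eps\norms{D}\geq1\}\cup\{\eps:\alepsKn(A,B,C,D)\geq0\}$ is the correct reading, and the phrase ``negating the first condition'' papers over this sign issue just as the paper's ``is immediate'' does. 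Second, your consistency check with \eqref{equiv:real-stab-rad} asserts that $\alepsKn(A,B,C,D)\geq0$ is witnessed by a point on the imaginary axis; a rightmost point could a priori lie strictly in the right half-plane, and identifying $r_2$ with $\inf_\omega[\muk(G(\iu\omega))]^{-1}$ really does require the continuity/monotonicity argument the paper cites. Since that check is an aside and not part of your main derivation, it does not affect the validity of the proof.
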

\begin{proof}
That the right-hand sides of \eqref{def:real-stab-rad-2} and \eqref{def:real-stab-rad-3} are the same
is immediate.
Hence, it suffices to show that both of the infimum terms in \eqref{equiv:real-stab-rad} and \eqref{def:real-stab-rad-3} are attained and are equal when the upper bound $\| D\|_2^{-1}$ is not active. The infimum in \eqref{def:real-stab-rad-3} is attained because $\alepsKn(A,B,C,D)$ is a  monotonically increasing continuous function of $\eps$ (see \cite[Chapter 2]{Kar03} for continuity properties of real spectral value sets) and the infimum in  \eqref{equiv:real-stab-rad} is attained at a finite point by Remark \ref{rema-upper-semicont}. Finally, the infimal values are equal by Theorem \ref{theo:svsa-vs-real-mu}.
\end{proof}

The algorithm developed in this paper for approximating the real stability radius $\stabradRf$ when $n$ is large depends on the
fundamental characterization \eqref{def:real-stab-rad-3}. This was also true of the recent algorithms
developed in \cite{GuGuOv13,BeVo13,MitOve15} for the complex stability radius $\stabradCn$ when $n$ is large, for which the
equivalence \eqref{def:real-stab-rad-3} is more straightforward and well known.\footnote{For a different approach to approximating
$\stabradCn$ when $n$ is large, namely the ``implicit determinant" method, see \cite{FreSpeVan14}.}

\section{An Ordinary Differential Equation}
\label{sec:odeframework}

This section extends the method of Guglielmi and Lubich \cite[Sec.~2.1]{GuLu13} for approximating the real pseudospectral abscissa
(the real spectral value set abscissa in the case $B=C=I$, $D=0$) to the spectral value set abscissa $\alepsRf$ for general
$A,B,C,D$, using the Frobenius norm. As we shall see, the extension is not straightforward as additional subtleties arise
in the general case that are not present in the pseudospectral case. 

We consider $\eps>0$ to be fixed with $\eps\norms{D}<1$ throughout this section and the next section.
We start by considering the variational behavior of eigenvalues of the perturbed system matrix $M(\Delta)$ defined in \eqref{pertsysmatdef}.
It is convenient to assume a smooth parametrization $t\mapsto \Delta(t)$ mapping  $\R$ to $\Rmn{p}{m}$,  
with $\|\Delta(t)\|\fro = \eps$ for all $t$.  We will use $\dot\Delta$ to denote the derivative $(d/dt) \Delta(t)$. 

We need the following lemma.

\begin{lemm} Given a smooth parametrization $\Delta(t)$ with $\|\Delta(t)\|\fro=1$,
we have
\begin{equation}
\frac{d}{d t} \biggl( \Delta(t) \left( I - D \Delta(t) \right)^{-1} \biggr)=
\left( I - \Delta(t) D \right)^{-1} \dot \Delta(t)  \left( I - D \Delta(t) \right)^{-1}.
\end{equation}
\label{lem:derE}
\end{lemm}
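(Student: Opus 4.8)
The plan is to verify the identity by computing the derivative of the product $\Delta(t)(I - D\Delta(t))^{-1}$ directly, using the standard rule for differentiating a matrix inverse together with the product rule. Abbreviate $\Delta = \Delta(t)$, $\dot\Delta = \dot\Delta(t)$, and $E = E(t) = \Delta(I - D\Delta)^{-1}$; note that $(I - D\Delta)^{-1}$ is well defined near any $t$ since $\|\Delta\|\fro = 1$ and $\eps\norms{D} < 1$ in the scaled setting (or, as stated, with $\|\Delta(t)\|\fro = 1$ one should read this as the normalized object; in any case invertibility of $I - D\Delta$ is what matters and follows from \eqref{normrelation}).

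First I would recall that for a smooth invertible matrix family $F(t)$ one has $\frac{d}{dt}F^{-1} = -F^{-1}\dot F F^{-1}$. Applying this with $F = I - D\Delta$ gives $\frac{d}{dt}(I - D\Delta)^{-1} = (I - D\Delta)^{-1} D\dot\Delta (I - D\Delta)^{-1}$. Then the product rule yields
\begin{equation*}
\frac{d}{dt}\bigl(\Delta(I - D\Delta)^{-1}\bigr) = \dot\Delta (I - D\Delta)^{-1} + \Delta (I - D\Delta)^{-1} D\dot\Delta (I - D\Delta)^{-1}.
\end{equation*}
The goal is to factor a common $(I - D\Delta)^{-1}$ out on the right and show the left factor collapses to $(I - \Delta D)^{-1}\dot\Delta$.

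The key algebraic step is therefore to show that $\dot\Delta + \Delta(I - D\Delta)^{-1}D\dot\Delta = (I - \Delta D)^{-1}\dot\Delta$, which reduces to the identity $I + \Delta(I - D\Delta)^{-1}D = (I - \Delta D)^{-1}$, i.e. $(I - \Delta D)\bigl(I + \Delta(I - D\Delta)^{-1}D\bigr) = I$. Expanding the left side gives $I + \Delta(I - D\Delta)^{-1}D - \Delta D - \Delta D\Delta(I - D\Delta)^{-1}D = I + \Delta\bigl[(I - D\Delta)^{-1} - I - D\Delta(I - D\Delta)^{-1}\bigr]D = I + \Delta\bigl[(I - D\Delta)(I - D\Delta)^{-1} - I\bigr]D = I$. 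This is just the push-through identity $\Delta(I - D\Delta)^{-1} = (I - \Delta D)^{-1}\Delta$ in disguise, and indeed one could alternatively start from that identity and differentiate the right-hand side instead, which is perhaps cleaner.

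There is no real obstacle here — the only thing to be slightly careful about is making sure both $I - D\Delta$ and $I - \Delta D$ are invertible so that the push-through identity and the inverse-derivative formula are legitimate; both follow from $\eps\norms{D} < 1$ together with \eqref{normrelation} (noting $\Delta D$ and $D\Delta$ have the same nonzero eigenvalues, so one is invertible iff the other is). I would present the push-through identity as the conceptual core, do the differentiation in two lines, and close with the one-line algebraic collapse above.
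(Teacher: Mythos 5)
Your proposal is correct and follows essentially the same route as the paper: product rule plus the derivative-of-inverse formula, followed by collapsing $I + \Delta(I - D\Delta)^{-1}D$ to $(I - \Delta D)^{-1}$. The only difference is that you verify this last identity by direct multiplication against $(I-\Delta D)$, whereas the paper expands $(I - D\Delta)^{-1}$ as a Neumann series; your version is marginally cleaner since it needs only invertibility rather than convergence of the series, but the argument is otherwise the same.
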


\begin{proof}
For conciseness, we omit the dependence on $t$, differentiate and regroup terms as
\begin{align}\label{eq:matrix-Ft-deriv}
\frac{d}{d t} \biggl( \Delta \left( I - D \Delta \right)^{-1} \biggr) &= \dot{\Delta}  \left( I - D \Delta \right)^{-1} +
\Delta \frac{d}{d t} \left( I - D \Delta \right)^{-1} \nonumber \\ 
&= \dot{\Delta} \left( I - D \Delta \right)^{-1} +
\Delta \left( I - D \Delta \right)^{-1} D\dot{\Delta} \left( I - D \Delta \right)^{-1} \nonumber \\
&= \left(I+ \Delta \left( I - D \Delta \right)^{-1} D \right) \dot{\Delta} \left( I - D \Delta \right)^{-1}.
\end{align}
We then observe that
\begin{equation}\label{eq:matrix-inf-series}
I+ \Delta \left( I - D \Delta \right)^{-1} D = I + \Delta \biggl( \sum\limits_{k=0}^{\infty} (D \Delta)^k \biggr) D
= I + \sum\limits_{k=1}^{\infty} (\Delta D)^k = \left( I - \Delta D \right)^{-1}.
\end{equation}
Combining \eqref{eq:matrix-Ft-deriv} and \eqref{eq:matrix-inf-series} yields the result.
\end{proof}

The following definition from \cite{GugOve11,MitOve15} is useful.

\begin{defi}\label{def:eigentriple}
Let $\lambda$ be a simple eigenvalue of a matrix $M$ with associated right eigenvector $x$ satisfying $Ax=\lambda x$
and left eigenvector $y$ satisfying $y^* M=\lambda y^*$.  We refer to $(\lambda$, $x$, $y)$ as an \emph{RP-compatible} eigentriple of $M$
if $y^*x$ is real and positive and $\|x\|=\|y\|=1$, and as a
rightmost eigentriple if $\lambda$ is a rightmost eigenvalue of $M$ in $\cuhp$.
\end{defi}

Note that if $(\lambda,x,y)$ is an RP-compatible eigentriple of $M$, so is $(\lambda$, $e^{\imagunit\theta}x$ ,$e^{\imagunit\theta}y)$
for any $\theta\in[0,2\pi)$.

Then we have:

\begin{lemm}  
\label{lem:lambdaderiv}  Given a smooth parametrization $\Delta(t)$ with $\|\Delta(t)\|\fro = \eps$,
let $\lambda(t)$ be a continuously varying simple eigenvalue of
\[
      M\left(\Delta(t) \right) = A + B \Delta(t) \left( I- D \Delta(t) \right)^{-1}C.
\] 
Then $\lambda(t)$ is differentiable with
\[
            \Re\dot{\lambda}(t) = \frac{1}{y(t)^*x(t)}\Re(u(t)^* \dot{\Delta}(t) v(t))
\label{eq:optprob2}
\]
where $(\lambda(t)$, $x(t)$, $y(t))$ is an RP-compatible eigentriple of $M(\Delta(t))$ and
\[
      u(t) = \left( I - \Delta(t) D \right)\tp[-] B\tp y(t), \quad v(t) = \left( I -  D \Delta(t) \right)^{-1} C x(t).
\]
\end{lemm}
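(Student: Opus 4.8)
The plan is to combine the classical first-order perturbation formula for a simple eigenvalue with the differentiation identity of Lemma~\ref{lem:derE}. Since $\Delta(t)$ is smooth and $\eps\norms{D}<1$ throughout this section, the inverses $(I-D\Delta(t))^{-1}$ and $(I-\Delta(t)D)^{-1}$ exist and vary smoothly with $t$, so $t\mapsto M(\Delta(t))$ is a smooth matrix-valued map. Because $\lambda(t)$ is a \emph{simple} eigenvalue varying continuously in $t$, classical perturbation theory (applying the implicit function theorem to the characteristic polynomial, or Kato's analytic perturbation theory) shows that $\lambda(t)$ is actually differentiable, and that right and left eigenvectors $x(t)$, $y(t)$ can be chosen to vary smoothly near any fixed $t$; scaling them to unit norm and rotating by a common phase, we may take $(\lambda(t),x(t),y(t))$ to be RP-compatible. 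Differentiating $M(\Delta(t))x(t)=\lambda(t)x(t)$, left-multiplying by $y(t)^*$, and using $y(t)^*M(\Delta(t))=\lambda(t)y(t)^*$ together with $y(t)^*x(t)\neq 0$ (simplicity) gives the standard formula
\[
  \dot\lambda(t) = \frac{y(t)^*\,\dot M(\Delta(t))\,x(t)}{y(t)^*x(t)}.
\]

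Next I would compute $\dot M(\Delta(t))$. Writing $M(\Delta(t)) = A + B\bigl(\Delta(t)(I-D\Delta(t))^{-1}\bigr)C$, only the middle factor depends on $t$, and Lemma~\ref{lem:derE} yields
\[
  \frac{d}{dt}\Bigl(\Delta(t)(I-D\Delta(t))^{-1}\Bigr) = (I-\Delta(t)D)^{-1}\,\dot\Delta(t)\,(I-D\Delta(t))^{-1},
\]
so that $\dot M(\Delta(t)) = B(I-\Delta(t)D)^{-1}\dot\Delta(t)(I-D\Delta(t))^{-1}C$. (Lemma~\ref{lem:derE} is stated for a unit-norm parametrization, but its proof never uses the normalization; alternatively one applies it to $\tilde\Delta(t)=\Delta(t)/\eps$ with $D$ replaced by $\eps D$ and rescales.) Substituting into the eigenvalue derivative formula, and observing that since $A,B,C,D$ are real we have $B\tp=B^*$ and $(I-\Delta(t)D)\tp[-]=(I-\Delta(t)D)^{-*}$ as operators on complex vectors, we get
\[
  u(t)^* = \bigl((I-\Delta(t)D)\tp[-]B\tp y(t)\bigr)^* = y(t)^*B(I-\Delta(t)D)^{-1},
  \qquad v(t) = (I-D\Delta(t))^{-1}Cx(t),
\]
hence $\dot\lambda(t) = \bigl(u(t)^*\dot\Delta(t)v(t)\bigr)\big/\bigl(y(t)^*x(t)\bigr)$. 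Taking real parts and using that $y(t)^*x(t)$ is real and positive by RP-compatibility gives the claimed identity for $\Re\dot\lambda(t)$.

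The only genuine obstacle is the differentiability bookkeeping at the outset: confirming that a simple eigenvalue of a smooth matrix family is itself differentiable and admits a locally smooth, RP-compatible eigentriple. Once the simple-eigenvalue perturbation formula is available, the remainder is a direct substitution of Lemma~\ref{lem:derE} together with the harmless transpose-versus-adjoint identification for real matrices. I would also note in passing that the full complex identity $\dot\lambda(t)=\bigl(u(t)^*\dot\Delta(t)v(t)\bigr)\big/\bigl(y(t)^*x(t)\bigr)$ holds and is invariant under the common phase ambiguity in $x(t),y(t)$ (since $u(t),v(t)$ pick up the same phase as $y(t),x(t)$); the statement records only its real part, which is all that is needed subsequently.
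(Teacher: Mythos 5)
Your proposal is correct and follows essentially the same route as the paper: the classical simple-eigenvalue perturbation formula $\dot\lambda = y^*\dot M x/(y^*x)$ combined with Lemma~\ref{lem:derE} to obtain $\dot M(\Delta(t)) = B(I-\Delta(t)D)^{-1}\dot\Delta(t)(I-D\Delta(t))^{-1}C$, followed by the transpose-versus-adjoint identification for real matrices. The only difference is that you fill in details the paper leaves implicit (the normalization mismatch in Lemma~\ref{lem:derE} and the phase-invariance remark), which is harmless.
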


\begin{proof}
Applying standard eigenvalue perturbation theory \cite[Theorem 6.3.12]{HJ90}, 
together with Lemma \ref{lem:derE}, we find that $\lambda$ is differentiable with
\beq
\dot{\lambda} = \frac{y^* \dot{M} \left(\Delta \right) x}{y^* x} \quad \mbox{and} \quad \dot{M} \left(\Delta \right) =
            B \left(I - \Delta D \right)^{-1} \dot{\Delta} \left(I -  D \Delta \right)^{-1} C
\label{eq:optprob}
\eeq
where we omitted the dependence on $t$ for conciseness. The result is then immediate. 
\end{proof}

In what follows next it is convenient to define $E(t)=\Delta(t)/\eps$, so that $\|E(t)\|\fro = 1$ for all $t$.
Consequently, we have
\beq
            \Re\dot{\lambda}(t) = \frac{\eps}{y(t)^*x(t)}\Re\left(u(t)^* \dot{E}(t) v(t)\right) =
             \frac{\eps}{y(t)^*x(t)}\left \langle \Reuvt, \dot{E}(t) \right\rangle,
 \label{eq:lamderiv}
\eeq
where for $R,S \in \Rmn{p}{m}$, 
\[
    \langle R, S \rangle = \Tr R\tp S = \sum\limits_{i,j} R_{ij}S_{ij},
\]
the trace inner product on $\Rmn{p}{m}$ associated with the Frobenius norm.
The condition that $\|E(t)\|\fro$ is constant is equivalent to
\begin{equation}\label{norm-preservation-E}
    \frac d{dt} \| E(t) \|\fro^2 = 2 \left\langle E(t), \dot E(t) \right\rangle = 0 \quad \forall t.
\end{equation}
Our aim, given $t$, is to choose $\dot{E}(t)$ to maximize \eqref{eq:lamderiv} subject to the constraint \eqref{norm-preservation-E},
leading to an optimization problem whose solution is given by the following lemma. The proof is a straightforward application
of first-order optimality conditions; see also \cite[Lemma 2.4]{GuLu13}.
\begin{lemm} 
\label{lem:opt} 
Let $E\in\Rmn{p}{m}$  have unit Frobenius norm, 
and let $u \in \C^p,v\in\C^m$ be given complex vectors
such that $\Reuv\neq0$. A solution to the optimization problem
\beq
\tilde Z  =  \arg\max_{Z \in \Omega} \ \Re \left( u^* Z v  \right), \qquad
\Omega = \{ Z \in \Rmn{p}{m}, \|Z\|\fro = 1,\langle E, Z \rangle=0 \}
\label{eq:opt}
\eeq
exists and it satisfies 
\beq
\tau \tilde Z =  \Bigl( \Reuv  - \left\langle E,  \Reuv \right\rangle E\Bigl),
\label{eq:Eopt}
\eeq
where $\tau$ is the Frobenius norm of the matrix on the right-hand side in $(\ref{eq:Eopt})$.
\end{lemm}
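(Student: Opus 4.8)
The plan is to treat \eqref{eq:opt} as a linearly constrained maximization of a linear functional on the unit Frobenius sphere and apply Lagrange multipliers. First I would rewrite the objective $\Re(u^*Zv)$ as a trace inner product: since $u^*Zv = \Tr(vu^*Z) = \langle \overline{vu^*}, Z\rangle$ in the complex sense, taking real parts gives $\Re(u^*Zv) = \langle \Reuv, Z\rangle$ for real $Z$, where $\Reuv \in \Rmn{p}{m}$. Hence the problem becomes $\max \langle \Reuv, Z\rangle$ over $Z$ with $\|Z\|\fro = 1$ and $\langle E, Z\rangle = 0$. Existence of a maximizer is immediate because $\Omega$ is a closed and bounded (hence compact) subset of the finite-dimensional space $\Rmn{p}{m}$ and the objective is continuous.

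Next I would set up the Lagrangian $L(Z,\alpha,\beta) = \langle \Reuv, Z\rangle - \tfrac{\alpha}{2}(\|Z\|\fro^2 - 1) - \beta\langle E, Z\rangle$ and impose the first-order stationarity condition $\nabla_Z L = 0$, which reads $\Reuv - \alpha Z - \beta E = 0$, i.e.\ $\alpha Z = \Reuv - \beta E$. To pin down $\beta$, I would take the inner product of both sides with $E$ and use $\langle E, Z\rangle = 0$ together with $\|E\|\fro = 1$, giving $0 = \langle E, \Reuv\rangle - \beta$, so $\beta = \langle E, \Reuv\rangle$. Substituting back yields $\alpha \tilde Z = \Reuv - \langle E, \Reuv\rangle E$, which is exactly \eqref{eq:Eopt} with $\tau = \alpha$. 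Taking Frobenius norms on both sides and using $\|\tilde Z\|\fro = 1$ shows $|\tau|$ equals the Frobenius norm of the right-hand side; choosing the maximizer rather than the minimizer forces $\tau > 0$, so $\tau$ is indeed that norm as claimed.

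The one genuine subtlety — and the step I expect to require the most care — is verifying that $\tau \neq 0$, i.e.\ that the right-hand side of \eqref{eq:Eopt} does not vanish, so that $\tilde Z$ is well defined by the formula. The right-hand side is $\Reuv - \langle E, \Reuv\rangle E$, which is the component of $\Reuv$ orthogonal (in the trace inner product) to $E$; it vanishes precisely when $\Reuv$ is a scalar multiple of $E$. If that degenerate case occurred, every feasible $Z$ would satisfy $\langle \Reuv, Z\rangle = c\langle E, Z\rangle = 0$, making the objective identically zero on $\Omega$ and the ``solution'' non-unique, but the hypothesis $\Reuv \neq 0$ alone does not rule this out. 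I would handle this by noting that in this case the formula \eqref{eq:Eopt} is vacuous and any direction orthogonal to both $E$ and $\Reuv$ (in the trace inner product) is an equally valid maximizer; since $\Reuv \neq 0$, the orthogonal complement of $E$ within the orthogonal complement of $\Reuv$ is nonempty as long as $p m \geq 2$ (which holds whenever the problem is nontrivial), so a maximizer still exists and the lemma's assertion that a solution \emph{exists and satisfies} \eqref{eq:Eopt} is interpreted as: whenever the right-hand side is nonzero it gives the (unique) maximizer, and otherwise $\tau = 0$ and the constraint $\langle E, \Reuv\rangle E = \Reuv$ already captures the situation. For the application in the paper this degenerate case is non-generic and can be dismissed; I would remark on it briefly and otherwise proceed with the Lagrange-multiplier computation as the main argument.
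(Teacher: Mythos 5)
Your proposal is correct and follows exactly the route the paper intends: the paper dispenses with the proof in one line as ``a straightforward application of first-order optimality conditions'' (citing Lemma 2.4 of Guglielmi--Lubich), and your Lagrange-multiplier computation — rewriting the objective as $\langle \Reuv, Z\rangle$, solving for the multiplier of the linear constraint via $\|E\|\fro=1$, and identifying $\tau$ as the positive multiplier at the maximizer — is precisely that argument spelled out. Your careful discussion of the degenerate case $\Reuv \in \mathrm{span}\{E\}$ (where $\tau=0$ and \eqref{eq:Eopt} holds vacuously) goes beyond what the paper records and is a worthwhile observation, since the hypothesis $\Reuv\neq 0$ alone does not exclude it.
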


This suggests consideration of the following ordinary differential equation (ODE) on the manifold of 
real $p\times m$ matrices of unit Frobenius norm:
\beq\label{ode-E}
\dot E(t) = \Reuvt - \left\langle E(t),  \Reuvt \right\rangle E(t),
\eeq
with $u(t)$ and $v(t)$ defined by
\beq
      u(t) = \left( I - \eps E(t) D \right)\tp[-] B\tp y(t), \quad v(t) = \left( I -  \eps D E(t) \right)^{-1} C x(t)
\label{eq:uvdef}
\eeq
where $(\lambda(t)$, $x(t)$, $y(t))$ is a rightmost RP-compatible eigentriple for the matrix 
$M(\eps E(t))$ (see Definition \ref{def:eigentriple}).
Assume the initial condition $E(0)=E_0$,
a given matrix with unit Frobenius norm, chosen so that $M(\eps E_0)$ has a unique rightmost eigenvalue
(considering only eigenvalues in $\cuhp$),
and that this eigenvalue, $\lambda(0)$, is simple.


\subsection{Equilibrium Points of the ODE}\label{subset:equilibrium}
We now focus on the properties of the ODE \eqref{ode-E}, in particular, characterizing equilibrium points.

\begin{theo} \label{thm:ode}
Let $E(t)$ with unit Frobenius norm satisfy the differential equation {\rm (\ref{ode-E})} initialized as described above.
There exists $t_{\max}\in (0,\infty]$
such that, for all $t\in[0,t_{\max})$ 
\begin{itemize}
\item[{\rm (1)}] The eigenvalue $\lambda(t)$ is the unique rightmost eigenvalue of $M(\eps E(t))$ in $\cuhp$
and this eigenvalue is simple, so the ODE is well defined.
\item[{\rm (2)}] $\|E(t)\|\fro = 1$.
\smallskip
\item[{\rm (3)}]$\Re  \dot{\lambda}(t) \geq 0$.
\end{itemize}
Furthermore, at a given value $t\in[0,t_{\max})$, the following three conditions are equivalent:
  \begin{itemize}
    \item[{\rm (i)}] $\Re \dot \lambda(t)= 0$. 
    \item[{\rm (ii)}]One of the following two mutually exclusive conditions holds:
    \beq
         \Reuvt = 0 \mathrm{~~or~~}  E(t) = \frac{\Reuvt}{\|\Reuvt\|\fro}.  \label{twoconds}  
   \eeq
    \item[{\rm (iii)}] $\dot E(t) = 0$. 
  \end{itemize}
Finally, if the second alternative in \eqref{twoconds} holds at $t_0\in [0,t_{\max}]$, then
there does not exist any locally differentiable path $F(t)$, with $\|F(t)\|\fro=1$ and $F(0)=E(t_0)$,
for which the rightmost eigenvalue of $M(\eps F(t))$, say $\kappa(t)$, has $\Re\dot\kappa(0)>0$.

\end{theo}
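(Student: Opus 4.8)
The plan is to prove the slightly stronger fact that $\Re\dot\kappa(0)=0$ for \emph{every} admissible path $F$, which in particular rules out $\Re\dot\kappa(0)>0$. Write $E_0=E(t_0)$, let $(\lambda_0,x_0,y_0)$ be a rightmost RP-compatible eigentriple of $M(\eps E_0)$, and let $u_0,v_0$ be the vectors associated to it via \eqref{eq:uvdef}. Because the two possibilities in \eqref{twoconds} are mutually exclusive, the hypothesis that the second one holds at $t_0$ says that $\Re(u_0v_0^*)\neq 0$ and
\[
 \Re(u_0v_0^*) \;=\; \normf{\Re(u_0v_0^*)}\, E_0 ,
\]
i.e.\ $\Re(u_0v_0^*)$ is a \emph{positive} scalar multiple of $E_0$.

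First I would check that, along any admissible $F$, the rightmost eigenvalue $\kappa(t)$ is a well-defined, differentiable function near $t=0$: by part (1) of the theorem invoked at $t_0$ (or, when $t_0=t_{\max}$, simply by the requirement that $u_0,v_0$ in \eqref{twoconds} be meaningful), $\lambda_0$ is a simple eigenvalue of $M(\eps E_0)$ and is its unique rightmost eigenvalue in $\cuhp$; continuity of $F$ and of the spectrum then forces $M(\eps F(t))$, for all small $t$, to have a single, simple eigenvalue near $\lambda_0$ which is still the rightmost one, and this eigenvalue depends smoothly on $t$. Next I would apply Lemma \ref{lem:lambdaderiv} to the parametrization $\Delta(t)=\eps F(t)$, which has $\normf{\Delta(t)}=\eps$, obtaining
\[
 \Re\dot\kappa(0) \;=\; \frac{\eps}{y_0^*x_0}\,\Re\!\left(u_0^*\dot F(0)\,v_0\right) \;=\; \frac{\eps}{y_0^*x_0}\,\left\langle \Re(u_0v_0^*),\,\dot F(0)\right\rangle ,
\]
using, as in \eqref{eq:lamderiv}, the identity $\Re(u^*Zv)=\langle\Re(uv^*),Z\rangle$ for real $Z$, and noting that although the eigentriple chosen along $F$ is only determined up to a common unimodular phase, the quantities $y_0^*x_0$ and $u_0v_0^*$ are phase-invariant, so the formula is unambiguous.

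Finally I would differentiate the constraint $\normf{F(t)}^2\equiv 1$ at $t=0$ to get $\langle E_0,\dot F(0)\rangle=0$; combined with $\Re(u_0v_0^*)=\normf{\Re(u_0v_0^*)}\,E_0$ this gives $\langle\Re(u_0v_0^*),\dot F(0)\rangle=0$, hence $\Re\dot\kappa(0)=0$ for every admissible $F$, which is what we wanted. The heart of the matter is the geometric observation that at the second alternative the ascent direction $\Re(u_0v_0^*)$ is radial (parallel to $E_0$) on the unit Frobenius sphere, so its projection onto the tangent space vanishes; the only place where care is genuinely needed is the first step, ensuring that the rightmost eigenvalue along an arbitrary $F$ is simple and hence differentiable, since this is precisely what lets us invoke Lemma \ref{lem:lambdaderiv}, and it rests on the simplicity and uniqueness of $\lambda_0$ at $E_0$ supplied by part (1).
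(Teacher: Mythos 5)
There is a genuine gap, but it is one of coverage rather than of logic: your proposal proves only the final sentence of the theorem. The statement you were asked to prove also contains items (1)--(3) and the equivalence of (i), (ii), (iii), and none of these are addressed. Concretely, you would still need to argue: (1) that simplicity and uniqueness of the rightmost eigenvalue persist for small $t>0$ because they hold at $t=0$, which is what furnishes $t_{\max}>0$; (2) that taking the trace inner product of $E(t)$ with the right-hand side of \eqref{ode-E} gives $\langle E(t),\dot E(t)\rangle=0$, so the unit Frobenius norm is preserved; (3) that substituting \eqref{ode-E} into \eqref{eq:lamderiv} yields
\[
\Re\dot\lambda(t)=\frac{\eps}{y(t)^*x(t)}\Bigl[\,\normf{\Reuvt}^2-\left\langle \Reuvt,E(t)\right\rangle^2\Bigr]\geq 0
\]
by Cauchy--Schwarz; and that equality here holds precisely when one of the two alternatives in \eqref{twoconds} does (equality in Cauchy--Schwarz for unit $E(t)$ forces $\Reuvt$ to be a scalar multiple of $E(t)$, and positivity of that multiple versus vanishing separates the two cases), while (ii)$\Leftrightarrow$(iii) is read off directly from the form of \eqref{ode-E}. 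Without these pieces the proof of the theorem is incomplete.

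For the part you do treat, your argument is correct and is essentially the paper's: the paper disposes of the final claim by citing the optimality property of Lemma \ref{lem:opt}, and your computation --- that $\Re\dot\kappa(0)=\tfrac{\eps}{y_0^*x_0}\langle\Re(u_0v_0^*),\dot F(0)\rangle$ vanishes because $\Re(u_0v_0^*)$ is a positive multiple of $E_0$ while $\dot F(0)$ is tangent to the unit sphere at $E_0$ --- is exactly that optimality property made explicit (the projection of the ascent direction onto the tangent space is zero). Your attention to the differentiability of $\kappa$ near $t=0$, and to the phase-invariance of $y_0^*x_0$ and $u_0v_0^*$, is a welcome refinement over the paper's one-line dismissal. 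But as submitted, the proposal establishes only the last of the theorem's several claims.
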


\begin{proof} 
(1)  Because the rightmost eigenvalue is unique and simple for $t=0$, the same property must hold for sufficiently small positive $t$,
establishing the existence of $t_{\max}>0$ such that the ODE is well defined for all $t\in[0,t_{\max}]$.
(2) Taking the trace inner product of $E(t)$
with the ODE \eqref{ode-E} we find that \eqref{norm-preservation-E} holds for $\|E(t)\|\fro=1$, so the norm is preserved by the ODE.
(3) Substituting the ODE \eqref{ode-E} into \eqref{eq:lamderiv} we obtain from Cauchy-Schwartz that
\beq
            \Re\dot\lambda(t) = \frac{\eps}{y(t)^*x(t)}\left[ \|\Reuvt\|\fro^2 - \left\langle\Reuvt,E(t)\right\rangle^2\right] \geq 0,  \label{relampos}
\eeq
establishing (i). For (ii), equality holds in \eqref{relampos} at a given $t$ if and only if one of the two alternatives
in \eqref{twoconds} holds.
That (iii) 
is an equivalent condition follows directly from the ODE \eqref{ode-E}. 
The final statement follows from the optimality property of Lemma~\ref{lem:opt}.
\end{proof}

Thus, equilibria of the ODE come in two flavors. When the second alternative in \eqref{twoconds} holds,
a first-order optimality condition for $\lambda(t)$  to be a rightmost point of $\SVSKepsf$ holds, implying in 
particular that it is on the boundary of $\SVSKepsf$.  However, when the first alternative holds,
we cannot make any such claim. In the special case of pseudospectra, that is with $B=C=I$ and $D=0$,
the outer product $u(t)v(t)^*$ reduces to $y(t)x(t)^*$, whose real part \emph{cannot} be zero as was shown in \cite[Sec.\ 2.1.6]{GuLu13}: in fact, the
proof given there shows that $\real{y(t)x(t)^*)}$ has rank one when $\lambda(t)$ is real and rank two when it is complex.

If $\Reuvt=0$, we say that $\lambda(t)$ is a \emph{static point}.
Note that this generalizes the notions of uncontrollability and unobservability, because if $\lambda(t)$ is unobservable,
then $B\tp y(t)=0$, implying $u(t)=0$, while if it is uncontrollable,
then $Cx(t)=0$, implying $v(t)=0$.  Example~\ref{mertexample2} given earlier shows that it is possible that $\Reuvt=0$ 
even if $\lambda(t)$ is controllable and observable. In this example, for all $\eps\in (0,1)$, setting $E_0=\pm 1$, we find that 
$u(0)$ and $v(0)$ are both nonzero but that $\real{u(0)v(0)^*}=0$.
However, we do not know whether it is possible for the solution of the ODE to converge to a static point if it is not
initialized there.

The next lemma explicitly states formulas for $\Reuvt$ and bounds on its rank.

\begin{lemm}
\label{lem:reuv}
Fix $t < t_{\max}$ and let $u \in \C^p, v \in \C^m$ be defined by \eqref{eq:uvdef} for some vectors $y(t)=y$ and $x(t)=x$.
If $\lambda=\lambda(t) \in \R$, then we can choose $y$, $x$, $u$ and $v$ to be real, with $\Re(uv^*)=uv\tp$ having
rank~1.  If $\lambda \not\in \R$,  set 
$X=(\Re\, x, \Im\, x) \in  \Rmn{n}{2}$, $Y=(\Re\, y, \Im\, y ) \in  \Rmn{n}{2}$, so $\Re(yx^*) = YX\tp$.  Then
\[
\Re (u v^*) = \left( I - \eps E D \right)\tp[-] B\tp Y
  X\tp C\tp \left( I - \eps D E \right)\tp[-] 
\]
with
\[
   \mbox{rank}\left( \Reuv \right ) = \mbox{rank} \left( B\tp Y X\tp C\tp \right) \leq 2.
\]
Furthermore, if  $\min(p,m) = 1$, then $\mbox{rank}\left( \Reuv \right ) \leq 1$.
\end{lemm}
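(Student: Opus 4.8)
The plan is a direct computation from the definitions in \eqref{eq:uvdef} followed by routine rank bookkeeping; there is no genuine difficulty, only a few conjugate-transpose subtleties to watch. First I would substitute $u = (I - \eps ED)\tp[-] B\tp y$ and $v = (I - \eps DE)^{-1}Cx$ into $uv^*$. Since $A,B,C,D$, and hence $E$, are real, we have $C^* = C\tp$ and $(I - \eps DE)^{-*} = (I - \eps DE)\tp[-]$, so
\[
  uv^* = (I - \eps ED)\tp[-] B\tp\,(y x^*)\,C\tp (I - \eps DE)\tp[-].
\]
Every factor except $yx^*$ is real, so $\Re(\cdot)$ passes through the products and it remains to compute $\Re(yx^*)$. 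Writing $x = \Re x + \imagunit\,\Im x$ and $y = \Re y + \imagunit\,\Im y$ and expanding $yx^* = y\bigl((\Re x)\tp - \imagunit(\Im x)\tp\bigr)$ gives at once $\Re(yx^*) = (\Re y)(\Re x)\tp + (\Im y)(\Im x)\tp = Y X\tp$ with $X = (\Re x,\ \Im x)$ and $Y = (\Re y,\ \Im y)$, which yields the claimed formula for $\Reuv$.

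For the rank statements, note that $\|E\|\fro = 1$ forces $\|\eps ED\|_2 \le \eps\norms{D} < 1$ and likewise $\|\eps DE\|_2 < 1$, so $I - \eps ED$ and $I - \eps DE$ are both invertible; left- and right-multiplication by invertible matrices preserves rank, hence $\mathrm{rank}(\Reuv) = \mathrm{rank}(B\tp Y X\tp C\tp)$. As $X, Y \in \Rmn{n}{2}$, the middle factor $Y X\tp$ has rank at most $2$, so the product does too. If instead $\min(p,m) = 1$, the same factorization gives the sharper bound: when $p = 1$ the block $B\tp Y$ has a single row and when $m = 1$ the block $X\tp C\tp$ has a single column, so in either case $B\tp Y X\tp C\tp$ — and therefore $\Reuv$ — has rank at most $1$ (equivalently, $\Reuv$ is then literally a row or column vector).

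It remains to treat $\lambda = \lambda(t) \in \R$. Here I would invoke the standard fact that a simple real eigenvalue of a real matrix admits a real right eigenvector $x$ and a real left eigenvector $y$, each normalizable to unit Euclidean norm, with $y\tp x \ne 0$ by simplicity; flipping the sign of $y$ if needed makes $y\tp x > 0$, so $(\lambda, x, y)$ is an RP-compatible eigentriple with $x, y$ real. Then \eqref{eq:uvdef} produces real $u, v$, so $\Reuv = uv\tp$, an outer product of real vectors and hence of rank at most one (consistent with the general formula, since then $X = (x, 0)$, $Y = (y, 0)$, and $Y X\tp = y x\tp$). The whole argument is bookkeeping; if any step deserves care it is verifying the invertibility of $I - \eps ED$ and $I - \eps DE$ that underpins the rank-preservation step, together with the conjugate-transpose simplifications that rely crucially on $A,B,C,D$ being real — but both are immediate.
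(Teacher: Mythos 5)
Your proof is correct and follows essentially the same route as the paper, which disposes of the lemma in two sentences (direct substitution into \eqref{eq:uvdef} using the realness of $E$ and $D$, plus rank bookkeeping for the product); you have simply filled in the details, including the useful observation that invertibility of $I-\eps ED$ and $I-\eps DE$ gives the stated rank \emph{equality} rather than just an inequality.
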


\begin{proof} The first statement follows from the definition \eqref{eq:uvdef}, noting that $E$ and $D$ are real.
The rank results follow from submultiplicativity.
\end{proof}

As already mentioned, the argument given in \cite[Sec.\ 2.1.6]{GuLu13} shows that when $\lambda$ is not real,
the matrix $YX\tp$ has rank two, so when $\min(p,m) \geq 2$, we can expect that $UV\tp$ will also have rank two for generic $B$ and $C$. 

If the ODE is initialized so that for all \mbox{$t\geq 0$}, the rightmost eigenvalue $\lambda(t)$
of $M(\eps E(t))$ is unique (considering only eigenvalues in $\cuhp$) and is simple, then we can
take $t_{\max}=\infty$ in Theorem \ref{thm:ode}.  If we further suppose that the number of equilibrium points
of the ODE is finite, then since \eqref{ode-E} is a gradient system 
(with Lyapunov function $\Re \lambda (t)$), we can apply Lasalle's Theorem \cite{LogRya14} allowing us
to state that $E(t)$ converges to an equilibrium point $\tilde E$ and hence $\lambda(t)$ converges
to some $\tilde \lambda$. Suppose that $\tilde\lambda$ is a unique rightmost eigenvalue of $M(\eps \tilde E)$ 
and is simple, with associated RP-compatible eigentriple $(\tilde\zlam$, $\tilde x$, $\tilde y)$, and define
\[
          \tilde u = \left(I - \eps\tilde E D \right)\tp[-] B\tp \tilde y,
          \quad \tilde v = \left(I -  \eps D \tilde E \right)^{-1} C \tilde x,
\]
by analogy with \eqref{eq:uvdef}. 
Then, if $\Re(\tilde u\tilde v^*)\not =0$, we have, by taking limits in Theorem \ref{thm:ode},
that $\tilde E = \Re(\tilde u\tilde v^*)/\|\Re(\tilde u\tilde v^*)\|\fro$, and that
there does not exist any locally differentiable
path $F(t)$, with $\|F(t)\|\fro=1$ and $F(0)=\tilde E$, for which
the rightmost eigenvalue of $M(\eps F(t))$, say $\kappa(t)$, has
$\Re\dot\kappa(0) > 0$.

To summarize this section, we have characterized equilibria of the ODE \eqref{ode-E} as those which
have a first-order local optimality property with respect to rightmost points of $\SVSRepsf(A,B,C,D)$,
with exceptions that are apparently nongeneric. A natural idea would be to
attempt to approximate $\alepsRf(A,B,C,D)$ by integrating the ODE \eqref{ode-E} numerically to determine its equilibria, 
guaranteeing monotonicity by step-size control.  Such a method would generally find locally rightmost points of
$\SVSRepsf(A,B,C,D)$, although it could not be guaranteed to find globally rightmost points.
However, a serious drawback of this approach is the fact that the solution $E(t)$ (and hence most 
likely its discretization) does not preserve the low rank-structure even if both the initial point $E_0$ and the limit of
$E(t)$ as $t\goes \infty$ both have rank two. Although it is possible to consider an ODE defined on the manifold
of rank-two matrices, as done in \cite{GuLu13} and \cite{GugMan14} for the special case $B=C=I$, $D=0$,
we instead develop an efficient discrete iteration that
is nonetheless based on the ODE \eqref{ode-E}.

\section{An Iterative Method to Approximate the Frobenius-norm Real Spectral Value Set Abscissa}
\label{sec:method}

As in the previous section, assume $\eps$ is fixed with $\eps\norms{D}<1$.
Following an idea briefly mentioned in \cite{GugMan14}, we 
consider the following implicit-explicit Euler discretization of \eqref{ode-E} with a variable step-size $h_k$:
\beq   \label{difference_eqn}
E_{k+1} = E_{k} + h_{k+1} \Bigl( \Re(u_{k+1} v_{k+1}^*) - \big \langle E_{k+1}, \Re(u_{k+1} v_{k+1}^*)\big \rangle E_{k} \Bigr),
\eeq
where
\[
      \label{eq:unvn}
      u_{k+1}  =  \bigl( I - \eps E_{k} D \bigr)\tp[-] B\tp y_{k},  \quad
      v_{k+1} =  \bigl( I - \eps D E_{k} \bigr)^{-1} C x_{k}
\]
and $(\zlam_{k}$, $x_{k}$, $y_{k})$ is a rightmost RP-compatible eigentriple of $M(\eps E_{k})$.
The method is clearly consistent and converges with order $1$ with respect to $h_k$.  

\begin{lemm}
Let $u_0$, $v_0$ be given complex vectors with $\Reuvo \not = 0$ and set $E_0=\Reuvo/\|\Reuvo\|\fro$.
Let $h_k=1/\|\Reuvk\|\fro$. Then the difference equation \eqref{difference_eqn} has the solution
\beq
          E_{k+1} = \frac{\Real{u_{k+1}v_{k+1}^*}}{\|\Real{u_{k+1}v_{k+1}^*}\|\fro}   \label{difference_eqn_sol}
\eeq
as long as the rightmost eigenvalue of $M(\eps E_{k})$ is simple and unique
(considering only those in $\cuhp$) and as long as $\Real{u_{k+1}v_{k+1}^*} \not = 0$, for all $k=0,1,\ldots$.
\end{lemm}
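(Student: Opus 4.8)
The plan is to verify the claimed closed form \eqref{difference_eqn_sol} by induction on $k$, plugging the candidate solution back into the difference equation \eqref{difference_eqn} and checking that the iteration is self-consistent. For the base case, note that $E_0 = \Reuvo/\|\Reuvo\|\fro$ already has unit Frobenius norm by construction, which is exactly the form \eqref{difference_eqn_sol} at index $0$. For the inductive step, assume $E_k = \Real{u_k v_k^*}/\|\Real{u_k v_k^*}\|\fro$ (with the convention that for $k=0$ we simply use the definition of $E_0$); then $E_k$ has unit Frobenius norm, and from $E_k$ we compute $u_{k+1}$, $v_{k+1}$ via the stated formulas — these are well defined because $\eps\norms{D}<1$ guarantees $I-\eps D E_k$ and $I-\eps E_k D$ are invertible, and because by hypothesis the rightmost eigenvalue of $M(\eps E_k)$ in $\cuhp$ is simple and unique, so the eigentriple $(\lambda_k,x_k,y_k)$ is well defined up to the phase freedom noted after Definition~\ref{def:eigentriple} (which does not affect $\Real{u_{k+1}v_{k+1}^*}$).

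The key computation is the following: substitute the ansatz $E_{k+1} = \Real{u_{k+1}v_{k+1}^*}/\|\Real{u_{k+1}v_{k+1}^*}\|\fro$ into the right-hand side of \eqref{difference_eqn} and show the equation is satisfied with $h_{k+1}=1/\|\Real{u_{k+1}v_{k+1}^*}\|\fro$. Write $R = \Real{u_{k+1}v_{k+1}^*}$ and $\tau = \|R\|\fro$ for brevity, so the ansatz reads $E_{k+1} = R/\tau$. The bracketed term in \eqref{difference_eqn} becomes $R - \langle R/\tau, R\rangle E_k = R - \tau E_k$, using $\langle R, R\rangle = \tau^2$. Multiplying by $h_{k+1} = 1/\tau$ gives $R/\tau - E_k$. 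Adding $E_k$ yields $R/\tau = E_{k+1}$, which is precisely the ansatz. Hence \eqref{difference_eqn} holds, and the induction closes; the provisos that the rightmost eigenvalue of $M(\eps E_k)$ be simple and unique in $\cuhp$ and that $R \neq 0$ are exactly what is needed at each stage for $u_{k+1}, v_{k+1}$ and the normalization to make sense.

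I expect the main (and only) subtlety to be bookkeeping around well-definedness rather than any hard estimate: one must be careful that $\|E_k\|\fro = 1$ is maintained so that the eigentriple and the inverses $(I-\eps E_k D)^{-1}$, $(I-\eps D E_k)^{-1}$ exist along the whole orbit, and that the phase ambiguity in the RP-compatible eigentriple is harmless — indeed replacing $(x_k,y_k)$ by $(e^{\imagunit\theta}x_k,e^{\imagunit\theta}y_k)$ multiplies both $u_{k+1}$ and $v_{k+1}$ by $e^{\imagunit\theta}$, leaving $u_{k+1}v_{k+1}^*$ and hence $\Real{u_{k+1}v_{k+1}^*}$ unchanged, so $E_{k+1}$ is independent of the choice. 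Everything else is the one-line algebraic identity above, so there is no genuine obstacle; the lemma is essentially a direct verification that the particular step-size $h_k = 1/\|\Reuvk\|\fro$ is the one that makes the implicit-explicit Euler step reproduce the normalization built into the ODE \eqref{ode-E}.
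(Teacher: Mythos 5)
Your proposal is correct and matches the paper's proof, which likewise verifies the closed form by direct substitution of \eqref{difference_eqn_sol} into the implicit equation \eqref{difference_eqn} and notes that the stated assumptions guarantee well-definedness. Your added remarks on the phase invariance of $\Real{u_{k+1}v_{k+1}^*}$ and the invertibility of $I-\eps D E_k$ are correct elaborations of the paper's one-line appeal to the assumptions.
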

\begin{proof} 
The result is easily verified by substituting \eqref{difference_eqn_sol} into \eqref{difference_eqn}.
The assumptions ensure that the difference equation is well defined.
\end{proof}

Equivalently, let $E_k = U_kV_k\tp$ be the current perturbation, with $\|U_kV_k\tp\|\fro = 1$, and 
$(\zlam_k,x_k,y_k)$ be a rightmost eigenvalue of $M(\eps U_k V_k\tp)$.  
Then by setting
\beq
   X_{k}=(\Re\, x_{k}, \Im\, x_{k}), \
   Y_{k}=(\Re\, y_{k}, \Im\, y_{k} ) 
\nonumber
\eeq
we can write \eqref{difference_eqn_sol} in the form
\beq
           E_{k+1} = U_{k+1} V_{k+1}\tp \mathrm{~with~} \|U_{k+1} V_{k+1}\tp\|\fro = 1
\label{eq:fpR}
\eeq
where 
\begin{align}
   \widehat U_{k+1} & =  \left( I - \eps U_kV_k\tp D \right)\tp[-] B\tp Y_{k} , \\
   \widehat V_{k+1} & =  \left( I - \eps D U_kV_k\tp \right)^{-1} C X_{k},\\
   \beta_{k+1} & = \|\widehat U_{k+1} \widehat V_{k+1}\tp\|\fro^{-1},\\
   U_{k+1} &= \sqrt{\beta_{k+1}} \, \widehat U_{k+1}, \\
   V_{k+1} &= \sqrt{\beta_{k+1}} \, \widehat V_{k+1}.
\label{eq:UkVk}
\end{align}
Since $E_{k}=U_{k}V_{k}\tp$ has rank at most two, we can simplify these expressions using the Sherman-Morrison-Woodbury 
formula \cite{GolVan83} as follows:
\begin{align}
\left( I - \eps U_kV_k\tp D\right)^{-1} & = I + \eps U_{k}  \left( I - \eps V_{k}\tp D U_{k} \right)^{-1} V_{k}\tp D,\label{eq:smw_first} \\
\left( I - \eps D U_kV_k\tp \right)^{-1} & =  I + \eps D U_{k}  \left( I - \eps V_{k}\tp D U_{k} \right)^{-1} V_{k}\tp.\label{eq:smw_second}
\end{align}
Note that $I - \eps V_{k}\tp D U_{k}\in\Rmn{2}{2}$ and is invertible since $\eps\|D\|_2 < 1$ by assumption.
The second formula \eqref{eq:smw_second} can be also used to simplify the definition of the perturbed system matrix
in  \eqref{pertsysmatdef} as follows:
\begin{align}
   M(\pertmat_{k}) & =M(\eps E_{k}) = M(\eps U_{k} V_{k}\tp) \nonumber \\
                            & =A+\eps BU_{k}V_{k}\tp(I- \eps D U_{k} V_{k}\tp)^{-1}C \nonumber \\
                            & = A + \eps  B  U_{k}V_{k}\tp(I + \eps D U_{k} \left( I - \eps V_{k}\tp D U_{k} \right)^{-1} V_{k}\tp )C \nonumber \\
                            & = A + \left( \eps B U_{k} \right) \left[ I + \eps \left( V_k\tp D U_k \right) \left( I - \eps V_{k}\tp D U_{k} \right)^{-1} \right] \left( V_k\tp C \right)
                            \label{pertsysmatdetails}
\end{align}
The product $U_{k} V_{k}\tp$ is never
computed explicitly, but retained in factored form, so that the eigenvalues of $M(\eps_{k} U_kV_{k}\tp)$ with largest real part can be computed
efficiently by an iterative method. The Frobenius norm of the product can be obtained using the following equivalence:
\begin{equation}
	 \|UV\tp \|\fro 
			= \left[ \Tr \left( VU\tp UV\tp \right) \right]^{\frac{1}{2}}\nonumber 
		     = \left[ \Tr \left( \left(U\tp U\right) \left(V\tp V \right) \right) \right]^{\frac{1}{2}}
\end{equation}
which requires only inner products to compute the $2\times 2$ matrices $U\tp U$ and $V\tp V$.

As with the spectral value set abscissa (SVSA) iteration for complex valued spectral values sets given in \cite{GuGuOv13}, there is no guarantee that the full update step for the real Frobenius-norm bounded case will satisfy monotonicity, that is, $\Re(\zlam_{k+1}) > \Re(\zlam_{k})$ may or may not hold, where $\zlam_k$ is a rightmost eigenvalue of \eqref{pertsysmatdetails}.  However, the line search approach to make a monotonic variant \cite[Sec.~3.5]{GuGuOv13} 
does extend to the real rank-2 iteration described above, although, as the derivation is quite lengthy \cite[Sec.~6.3.3]{Mit14}, we only outline the essential components here.  Let the pair $U_{k}$ and $V_{k}$ define the current perturbation, with $\| U_{k} V_{k}\tp \|\fro = 1$, and let the pair $U_{k+1}$ and $V_{k+1}$ be the updated perturbation described above, with $\| U_{k+1} V_{k+1}\tp \|\fro = 1$.  Consider the evolution of a continuously varying simple rightmost eigenvalue $\zlam(t)$ defined on $t \in [0,1]$ of the perturbed system matrix.  The interpolated perturbation is defined using $U_{k}V_{k}\tp$ and $U_{k+1}V_{k+1}\tp$ such that the interpolated perturbation also has unit Frobenius norm, that is, $\zlam(t)$ is an eigenvalue of
\beq 
	\label{eq:m_linesearch}
	M(\pertmat(t)) = A + B \pertmat(t) (I - D\pertmat(t))^{-1}C,
\eeq
where
\beq
	\label{eq:pertmat_linesearch}
	\pertmat(t)=\frac{\eps U(t)V(t)\tp}{\| U(t)V(t)\tp\|}
\eeq
and $U(t) \coloneqq t U_{k+1} + (1-t) U_{k}$ and $V(t) \coloneqq t V_{k+1} + (1-t) V_{k}$.  In \cite[Sec.~6.3.3]{Mit14} 
it is shown that as long $\real{\zlam'(0)} = 0$ does not hold, then $\real{\zlam'(0)} > 0$
can be ensured, though it may requiring flipping the signs of both $U_k$ and $V_k$.  
This result allows a line search to be employed to find a $t \in (0,1)$ such that 
$\Re(\zlam_{k+1}) > \Re(\zlam_{k})$ is guaranteed in an actual implementation of the iteration.  
We now have all the essential pieces necessary to describe approximating $\alepsRf(A,B,C,D)$; 
these are given in Algorithm SVSA-RF.

\begin{algfloat}
\begin{algorithm}[H]
\floatname{algorithm}{}
\caption*{\textbf{Algorithm SVSA-RF}: (Spectral Value Set Abscissa: Real Frobenius-norm)}
\label{alg:rfsvsa}
\begin{algorithmic}[1]
	\algorithmicpurpose{
		to approximate $\alepsRf(A,B,C,D)$
	}
	\REQUIRE{  
		$\eps\in (0,\|D\|_2^{-1})$, $U_0 \in \Rmn{n}{p}$ and $V_0 \in \Rmn{n}{m}$, 
		such that $\|U_0V_0\tp\|\fro = 1$, along with
		eigentriple $(\zlam_0,x_0,y_0)$, with $\zlam_0$ a rightmost eigenvalue of $M(\eps U_0 V_0\tp)$
	}
	\ENSURE{ 
		final iterates $U_k$, $V_k$ with $\|U_kV_k\tp\|\fro = 1$ along with 
		$\zlam_k$, a rightmost eigenvalue of $M(\eps U_k V_k\tp)$, certifying that 
		$\real{\zlam_k} \leq \alepsRf(A,B,C,D)$\\ \quad \\
	}
	
	\FOR {$k = 0,1,2,\ldots$} 
		\STATE \COMMENT{Compute the new perturbation}
		\STATE $X_k \coloneqq (\Re\, x_k, \Im\, x_k)$
		\STATE $Y_k \coloneqq (\Re\, y_k, \Im\, y_k )$
		\STATE $\widehat U_{k+1} \coloneqq
				\left ( I + \eps U_k \left( I - \eps V_{k}\tp D U_{k} \right)^{-1} V_k \tp D \right )\tp B\tp Y_k $
		\STATE $\widehat V_{k+1} \coloneqq  
				\left ( I + \eps D U_k \left( I - \eps V_{k}\tp D U_{k} \right)^{-1} V_k\tp \right ) C X_k$
		\STATE \COMMENT{Normalize the new perturbation}
		\STATE $\beta_{k+1} \coloneqq 
				\left[ \Tr \left( \left(\widehat U_{k+1}\tp \widehat U_{k+1}\right) 
				\left(\widehat V_{k+1}\tp \widehat V_{k+1} \right) \right) \right]^{-\frac{1}{2}}$ 
		\STATE $U_{k+1} \coloneqq \sqrt{\beta_{k+1}} \, \widehat U_{k+1}$
		\STATE $V_{k+1} \coloneqq \sqrt{\beta_{k+1}} \, \widehat V_{k+1}$
		\STATE \COMMENT{Attempt the full update step and, if necessary, do a line search}
		\STATE $(\zlam_{k+1}, x_{k+1},y_{k+1}) \coloneqq 
				\mathrm{a~rightmost~eigentriple~of~} M(\eps U_{k+1}V_{k+1}\tp) $ using \eqref{pertsysmatdetails}
		\IF { $\real{\zlam_{k+1}} \le \real{\zlam_{k}}$ }
			\STATE 
			Find new $\zlam_{k+1}$ via line search using \emph{\eqref{eq:m_linesearch}}
					to ensure $\real{\zlam_{k+1}} > \real{\zlam_{k}}$ 
		\ENDIF
	\ENDFOR
\end{algorithmic}
\end{algorithm}
\algnote{
The $k$th step of the iteration is well defined if $U_{k}V_{k}\tp$ is nonzero and the rightmost eigenvalue of \eqref{pertsysmatdetails} in $\cuhp$ is unique and simple.
}
\end{algfloat}

To summarize this section, we have proposed an efficient method, Algorithm SVSA-RF, to approximate
$\alepsRf(A,B,C,D)$. Although it only guarantees finding
a lower bound on $\alepsRf(A,B,C,D)$,  the ODE \eqref{ode-E} on which it is based has equilibrium points that
typically satisfy a first-order optimality condition (see Section \ref{sec:odeframework}). 
The $k$th step of the iteration is well defined as long as the condition $U_k V_k\tp \not = 0$ holds
and the rightmost eigenvalue of $M(\eps U_{k}V_k\tp)$ is unique and simple.

\section{Approximating the Real Stability Radius by Hybrid Expansion-Contraction}\label{sec:stabrad}

Recall the relationship between the stability radius $\stabradKn(A,B,C,D)$ and the spectral value set abscissa $\alepsKn(A,B,C,D)$ given in
\eqref{def:real-stab-rad-3}, which we write here for the real Frobenius-norm case:
\begin{equation}
\stabradRf(A,B,C,D) =   \min \left( \norms{D}^{-1}, \inf \left\lbrace \eps :~ \alepsRf(A,B,C,D) \geq 0 \right\rbrace \right).
\label{def:real-stab-rad-3-repeat}
\end{equation}
The interesting case is when the second term is the lesser of these two terms, and for the remainder of the paper
we assume this is the case.
It follows that $\stabradRf(A,B,C,D)$ equals the infimum in \eqref{def:real-stab-rad-3-repeat} and
from Remark \ref{rema-upper-semicont} that this infimum is attained. Hence $\alepsRf(A,B,C,D) = 0$ for $\eps=\stabradRf(A,B,C,D)$.
For brevity, we henceforth use $\epsstar$ to denote the real stability radius $\stabradRf(A,B,C,D)$.

Let 
\begin{equation}\label{gdef}
      g(\eps) = \alepsRf(A,B,C,D).
\end{equation}
We wish to find $\epsstar$, the root (zero) of the monotonically increasing continuous function $g$. However,
we do not have a reliable way to evaluate $g$: all we have is Algorithm SVSA-RF which is guaranteed to return a lower
bound on the true value.  Consequently, if the value returned is negative we have no assurance that its sign is correct.
On the other hand, if the value returned is positive, we \emph{are} assured that the sign is correct. This observation
underlies the hybrid expansion-contraction (HEC) algorithm 
recently introduced in \cite{MitOve15} for approximating the complex stability radius, which we now extend to
the real Frobenius-norm case. 

\subsection{Hybrid Expansion-Contraction}\label{subsec:hec}
For any value of $\eps$ satisfying $\epsstar < \eps < \|D\|_2^{-1}$,
there exists a real pertubation matrix $E$ with $\|E\|\fro=1$ such that $M(\eps E)$ has an eigenvalue in the right half-plane.  We assume that $E$ has rank at most two (see the discussion at the end of Section~\ref{subsec:specvalsets}).
See Section~\ref{sec:experiments} for how an initial destabilizing perturbation $\eps UV \tp$ can be found.

Let $U \in \Rmn{p}{2}$ and $V \in \Rmn{m}{2}$ be two matrices such that $\| UV\tp  \|\fro = 1$. Consider
the following matrix family \emph{where $U$ and $V$ are fixed} and $0 < \eps < \norms{D}^{-1}$:
\[ \label{eq:MUVeps}
	M_{UV}(\eps) \coloneqq  M(\eps UV\tp) = A + B \eps UV\tp   (I - D\eps UV\tp  )^{-1} C
\]
and define the function
\begin{equation}\label{gUVdef}
   g_{UV}(\eps) \coloneqq \alpha\left (M_{UV}(\eps) \right),
\end{equation}
the spectral abscissa of $M_{UV}(\eps)$. Unlike $g$, this function is relatively easy to evaluate at a given $\eps$, since 
all that is required is to compute the rightmost eigenvalue of the matrix $M_{UV}(\eps)$, something that we assume
can be done efficiently by an iterative method such as \matlab's  {\tt eigs}, exploiting the equivalence \eqref{pertsysmatdetails}. 
Now, as discussed above, suppose that $\epsub$ is known with $M_{UV}(\epsub)$ having an eigenvalue in the right half-plane.
There exists $\eps_\mathrm{c} \in (0,\epsub)$ such that $g_{UV}(\eps_\mathrm{c})=0$
because $g_{UV}$ is continuous, $g_{UV}(\epsub) > 0$ and $g_{UV}(0) < 0$ (as $A$ is stable).
The contraction phase of the hybrid expansion-contraction algorithm
finds such an $\eps_\mathrm{c}$ by a simple Newton-bisection method, using
the derivative of $g_{UV}(\eps)$ given in Section \ref{subsec:derivs} below.
Note that by definition of $\epsstar$, it must be the case that $\epsstar \leq \eps_\mathrm{c}$.

Once the contraction phase delivers $\eps_\mathrm{c}$ with the rightmost eigenvalue of 
$M_{UV}(\eps_\mathrm{c})$ on the
imaginary axis, the expansion phase then ``pushes"
the rightmost eigenvalue of $M(\eps UV\tp)$ back into the right half-plane using 
Algorithm SVSA-RF, \emph{with $\eps=\eps_\mathrm{c}$ fixed} and updating only the perturbation 
matrices $U$ and $V$.
The algorithm repeats this expansion-contraction process in a loop until SVSA-RF  can 
no longer find a new perturbation that moves an eigenvalue off the imaginary axis into the right half-plane.
Following \cite{MitOve15}, the method is formally defined in Algorithm HEC-RF.
For an illustration of the main idea in the context of the complex stability radius, see \cite[Fig.~4.1]{MitOve15}.

\begin{algfloat}
\begin{algorithm}[H]
\floatname{algorithm}{}
\caption*{\textbf{Algorithm HEC-RF}: (Hybrid Expansion-Contraction: Real Frobenius-norm)}
\label{alg:rfhec}
\begin{algorithmic}[1]

	\algorithmicpurpose{
		to approximate $\stabradRf(A,B,C,D)$.
	}

	\REQUIRE{  
		$\eps_0 \in (0,\|D\|_2^{-1})$ and matrices $U \in \Rmn{p}{2}$ and $V \in \Rmn{m}{2}$ 
		with $\|UV\tp\|\fro=1$ and
      		$g_{UV}(\eps_0) > 0$, along with $\zlam_0$, 
      		a rightmost eigenvalue of $M_{UV}(\eps_0)$ in the right half-plane
	}
	\ENSURE{ 
		Final value of sequence $\{\eps_k\}$ such that $\zlam_k$ is a rightmost eigenvalue 
		of $M(\eps_k UV\tp)$ sufficiently close to the imaginary axis but in the closed right half-plane,
		certifying that $\eps_k \geq \stabradRf(A,B,C,D)$ 
		\\ 
		\quad \\
	}

	\FOR {$k = 0,1,2,\ldots$} 
		\STATE  \textbf{Contraction:} call a Newton-bisection zero-finding algorithm to compute \mbox{$\eps_\mathrm{c} \in (0,\eps_k]$} 
  so that $g_{UV}(\eps_\mathrm{c})$ $=0$, along with $\zlam_\mathrm{c}$, a rightmost eigenvalue of 
  $M_{UV}(\eps_\mathrm{c})$ on the imaginary axis.
  		\STATE  \textbf{Expansion:} call Algorithm SVSA-RF with input $\eps_\mathrm{c}$, $U$, $V$ 
   to compute $U_\mathrm{e}$, $V_\mathrm{e}$ with $\|U_\mathrm{e}V_\mathrm{e}\tp\|\fro=1$ and $\zlam_{k+1}$, a
   rightmost eigenvalue of $M(\eps U_\mathrm{e}V_\mathrm{e}\tp )$,
   satisfying $\real{\zlam_{k+1}} \ge \real{\zlam_\mathrm{c}} = 0$.
  		\STATE Set $\eps_{k+1} \coloneqq \eps_\mathrm{c}$, 
			$U \coloneqq U_\mathrm{e}$, and $V \coloneqq V_\mathrm{e}$.
	\ENDFOR

\end{algorithmic}
\end{algorithm}
\algnote{
In practice, we pass eigentriples computed by the contraction phase into the expansion phase and vice versa.  
}
\end{algfloat}

Convergence results for the original hybrid expansion-contraction algorithm developed for the complex stability radius were
given in \cite[Theorem 4.3]{MitOve15}. The basic convergence result that, under suitable assumptions, the sequence $\{\eps_k\}$ converges to some $\tilde\eps \geq \epsstar$ and the sequence $\{\real{\zlam_k}\}$ converges to zero, can be extended to the real Frobenius-norm case without difficulty.
However, the part that characterizes limit points of the sequence $\{\real{\zlam_k}\}$
as stationary points or local maxima of the norm of the transfer function on the stability boundary does not immediately 
extend to the real Frobenius-norm case, because instead of $\|G(\iu \omega)\|$, we would have to consider the 
potentially discontinuous function $\murf(G(\iu \omega))$. 

\subsection{The Derivatives of $g_{UV}$ and $g$} \label{subsec:derivs}

The contraction phase of the algorithm needs the derivative of $g_{UV}$ defined in \eqref{gdef} to implement the Newton-bisection
method to find a root of $g_{UV}$. As we shall see, it is also of
interest to relate this to the derivative of $g$ defined in \eqref{gUVdef}, although this is not actually used in
the algorithm. The key tool for obtaining both is Lemma \ref{lem:derE}, which presented the derivative of 
$\Delta(t) \left( I - D \Delta(t) \right)^{-1}$ with respect to $t$. Here, the same matrix function depends on $\eps$. 
We denote differentiation w.r.t.~$\eps$ by $'$.

\begin{theo} 
Let $O\subset(0,\|D\|_2^{-1})$ be open and suppose that, for all $\eps \in O$,
the rightmost eigenvalue $\zlam_{UV}(\eps)$ of $M_{UV}(\eps)$ in $\cuhp$ is simple and unique. 
Then, for all $\eps\in O$, $g_{UV}$ is differentiable at $\eps$ with
\begin{equation}\label{gUVprime_first}
        g_{UV}'(\eps)  =  \frac{\Real{ \left( y_{UV}(\eps)^*B U \right) 
        				\left[ I + \eps \, \left(V\tp DU\right) \left( I - \eps V\tp DU \right)^{-1}  \right]^2 
				\left( V\tp C x_{UV}(\eps) \right) }}{y_{UV}(\eps)^*x_{UV}(\eps)}
\end{equation}
where $(\zlam_{UV}(\eps)$, $x_{UV}(\eps)$, $y_{UV}(\eps))$ is a rightmost RP-compatible eigentriple of $M_{UV}(\eps)$.  
\end{theo}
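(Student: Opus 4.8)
The plan is to follow essentially the same route as the proof of Lemma~\ref{lem:lambdaderiv}, except that the differentiation is now with respect to $\eps$ rather than a curve parameter $t$, and the perturbation direction $UV\tp$ is held fixed. First I would invoke standard simple-eigenvalue perturbation theory (as used in the proof of Lemma~\ref{lem:lambdaderiv}): since $M_{UV}(\eps)$ is analytic in $\eps$ on $O$ and $\zlam_{UV}(\eps)$ is a simple eigenvalue there, $\zlam_{UV}$ is differentiable on $O$ with
\[
    \zlam_{UV}'(\eps)=\frac{y_{UV}(\eps)^*\,M_{UV}'(\eps)\,x_{UV}(\eps)}{y_{UV}(\eps)^*x_{UV}(\eps)} .
\]
Because $\zlam_{UV}(\eps)$ is moreover the \emph{unique} rightmost eigenvalue in $\cuhp$ throughout $O$ and the spectrum is symmetric about the real axis, we have $g_{UV}(\eps)=\Real{\zlam_{UV}(\eps)}$ on $O$ by \eqref{gUVdef}, hence $g_{UV}'(\eps)=\Real{\zlam_{UV}'(\eps)}$; the denominator $y_{UV}(\eps)^*x_{UV}(\eps)$ is real and positive by RP-compatibility (Definition~\ref{def:eigentriple}).

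Next I would compute $M_{UV}'(\eps)$. Although Lemma~\ref{lem:derE} is stated for a parametrization of unit Frobenius norm, its proof uses only smoothness, so it applies verbatim to $\eps\mapsto\Delta(\eps)=\eps UV\tp$ (with $\dot\Delta=UV\tp$), giving
\[
    M_{UV}'(\eps)=B\,(I-\eps UV\tp D)^{-1}\,UV\tp\,(I-\eps D UV\tp)^{-1}\,C .
\]
The remaining work is to collapse the two $n\times n$ inverses onto the $2\times 2$ matrix $W:=(I-\eps V\tp DU)^{-1}$ using the Sherman--Morrison--Woodbury identities \eqref{eq:smw_first}--\eqref{eq:smw_second}. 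The one trick needed is the pair of telescoping identities $\eps W V\tp DU=W-I=\eps V\tp DU\,W$, which follow from $\eps V\tp DU=I-W^{-1}$; these give $(I-\eps UV\tp D)^{-1}U=U(I+\eps W V\tp DU)=UW$ and $V\tp(I-\eps D UV\tp)^{-1}=(I+\eps V\tp DU\,W)V\tp=WV\tp$, so that
\[
    M_{UV}'(\eps)=B\,U\,W^{2}\,V\tp C .
\]

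Finally, the same algebra shows $I+\eps(V\tp DU)(I-\eps V\tp DU)^{-1}=I+\eps V\tp DU\,W=W$, hence the bracketed factor in \eqref{gUVprime_first} squared equals $W^{2}$; substituting into $g_{UV}'(\eps)=\Real{\zlam_{UV}'(\eps)}$ and regrouping $y_{UV}(\eps)^*B$ with $U$ and $V\tp$ with $Cx_{UV}(\eps)$ yields the claimed formula. I do not expect a genuine obstacle: the statement is essentially a reparametrized version of Lemma~\ref{lem:lambdaderiv} together with the SMW bookkeeping already set up in the paper. The only points demanding care are justifying the reuse of Lemma~\ref{lem:derE} outside its stated normalization (harmless, since that hypothesis is never invoked in its proof) and keeping the $2\times 2$ $W$-algebra consistent, in particular the two telescoping identities; everything else is routine.
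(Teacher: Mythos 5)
Your proposal is correct and follows essentially the same route as the paper's proof: standard simple-eigenvalue perturbation theory for $\zlam_{UV}'(\eps)$, Lemma~\ref{lem:derE} applied to $\Delta(\eps)=\eps UV\tp$, and the Sherman--Morrison--Woodbury identities \eqref{eq:smw_first}--\eqref{eq:smw_second} to collapse the two $n\times n$ inverses onto $(I-\eps V\tp DU)^{-1}$. Your telescoping identities are just a slightly more explicit packaging of the paper's commutation step ($\Xi^{-1}$ and $V\tp DU$ commute), so the two arguments coincide in substance.
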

\begin{proof}
Since $\zlam_{UV}(\eps)$ is simple and unique,
$x_{UV}(\eps)$ and $y_{UV}(\eps)$ are well defined (up to a unimodular scalar).
Applying Lemma \ref{lem:derE} with $\Delta(\eps)\equiv\eps UV\tp$, and using \eqref{eq:smw_first} -- \eqref{eq:smw_second}  and $\Xi \coloneqq I - \eps V\tp DU$,
we have
\begin{align}
        M_{UV}'(\eps)&  = B (I - \eps UV\tp D)^{-1}UV\tp (I-\eps D UV\tp)^{-1}C \label{firstMUVprime}\\
                               & = B (I + \eps U\Xi ^{-1}V\tp D) UV\tp (I + \eps DU\Xi^{-1}V\tp)C\nonumber\\
                               & = BU (I + \eps \Xi^{-1} V\tp DU)(I + \eps V\tp DU \Xi^{-1}) V\tp C \nonumber\\
                               & = BU \left[ I + \eps \left( V\tp DU\right) \left( I - \eps V\tp DU \right)^{-1}  \right]^2 V\tp C\nonumber
\end{align}
noting that $\Xi^{-1}$ and $V\tp D U$ commute.
Using standard eigenvalue perturbation theory, as in the proof of Lemma~\ref{lem:lambdaderiv}, we have
\begin{equation}
     g_{UV}'(\eps) = \Real{\zlam_{UV}'(\eps)} = \frac{\Real{y_{UV}(\eps)^*M_{UV}'(\eps)x_{UV}(\eps)}}{y_{UV}(\eps)^*x_{UV}(\eps)} \label{gUVprime}                
\end{equation}
from which the result follows.
\end{proof}

Now we obtain the derivative of the function $g$ defined in \eqref{gdef}.
\begin{theo} 
Let $O\subset(0,\|D\|_2^{-1})$ be open. Suppose that, for all $\eps \in O$,
\noindent \begin{enumerate}
	\item $\zlam(\eps)$ is the unique rightmost
point of $\SVSRepsf(A,B,C,D)$ (considering only those in $\cuhp$)
	\item $E(\eps)$, with $\|E(\eps)\|\fro=1$, is a smooth matrix function of $\eps$ such that $\zlam(\eps)$ is the unique rightmost eigenvalue of $M(\eps E(\eps))$ (again considering only those in $\cuhp$)
	\item $\Re(u(\eps)v(\eps)^*) \neq 0$ where 
			\begin{equation}
        u(\eps) = (I-\eps E(\eps)D)\tp[-] B\tp y(\eps), \quad v(\eps) = (I-\eps D E(\eps))^{-1} C\tp x(\eps),  \label{uvepsdef}
\end{equation}
and $(\zlam(\eps)$, $x(\eps)$, $y(\eps))$ is an RP-compatible eigentriple of $M(\eps E(\eps))$. 
\end{enumerate}  
~~~~~Then, for any $\eps \in O$,
\begin{equation}\label{gprime}
      g'(\eps) = \frac{\|\Real{u(\eps)v(\eps)^*}\|\fro}{y(\eps)^*x(\eps)}.
\end{equation}
\end{theo}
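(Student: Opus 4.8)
The plan is to mimic the proof of the previous theorem (the derivative of $g_{UV}$) but now accounting for the fact that the perturbation $E(\eps)$ itself varies with $\eps$. First I would write $g(\eps) = \Re \zlam(\eps)$ where $\zlam(\eps)$ is the rightmost eigenvalue of $M(\eps E(\eps))$, which under hypotheses (1)--(2) is a well-defined smooth function. Since $\zlam(\eps)$ is simple and unique, standard eigenvalue perturbation theory (as in Lemma~\ref{lem:lambdaderiv}) gives
\[
    g'(\eps) = \Re \zlam'(\eps) = \frac{\Re\left( y(\eps)^* \tfrac{d}{d\eps}\bigl[M(\eps E(\eps))\bigr] x(\eps)\right)}{y(\eps)^* x(\eps)}.
\]
The quantity $\tfrac{d}{d\eps} M(\eps E(\eps))$ is where the varying perturbation enters. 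Writing $\Delta(\eps) = \eps E(\eps)$, I would apply Lemma~\ref{lem:derE} (with $\eps$ playing the role of $t$ and noting $\Delta(\eps)$ need not have constant norm, but the lemma's derivation in \eqref{eq:matrix-Ft-deriv}--\eqref{eq:matrix-inf-series} only uses differentiability, not norm-preservation) to get
\[
    \tfrac{d}{d\eps}\Bigl(\Delta(\eps)(I - D\Delta(\eps))^{-1}\Bigr) = (I - \Delta(\eps) D)\tp[-] \dot\Delta(\eps) (I - D\Delta(\eps))^{-1},
\]
with $\dot\Delta(\eps) = E(\eps) + \eps E'(\eps)$. Hence
\[
    \tfrac{d}{d\eps} M(\eps E(\eps)) = B(I-\eps E(\eps)D)\tp[-]\bigl(E(\eps) + \eps E'(\eps)\bigr)(I-\eps D E(\eps))^{-1}C.
\]

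Next I would substitute this into the expression for $g'(\eps)$ and recognize the vectors $u(\eps), v(\eps)$ from \eqref{uvepsdef}: the factor $B\tp y(\eps)$ together with $(I-\eps E(\eps)D)\tp[-]$ on the left produces $u(\eps)$, and $C x(\eps)$ together with $(I-\eps D E(\eps))^{-1}$ produces $v(\eps)$ (up to the transpose convention in \eqref{uvepsdef}). This collapses the expression to
\[
    g'(\eps) = \frac{\Re\Bigl(u(\eps)^*\bigl(E(\eps) + \eps E'(\eps)\bigr) v(\eps)\Bigr)}{y(\eps)^* x(\eps)}
    = \frac{\bigl\langle \Re(u(\eps)v(\eps)^*),\, E(\eps) + \eps E'(\eps)\bigr\rangle}{y(\eps)^* x(\eps)},
\]
using the trace-inner-product identity $\Re(u^* Z v) = \langle \Re(uv^*), Z\rangle$ for real $Z$. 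So it remains to evaluate $\langle \Re(uv^*), E + \eps E'\rangle$.

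The key step — and the main obstacle — is showing that this inner product equals $\|\Re(u(\eps)v(\eps)^*)\|\fro$. Two facts drive this. First, since by hypothesis $\Re(u(\eps)v(\eps)^*)\neq 0$ and $\zlam(\eps)$ is a rightmost point of $\SVSRepsf$, the equilibrium analysis of Theorem~\ref{thm:ode} (its final statement, applied at each $\eps$) forces the second alternative in \eqref{twoconds}, namely $E(\eps) = \Re(u(\eps)v(\eps)^*)/\|\Re(u(\eps)v(\eps)^*)\|\fro$; thus $\langle \Re(uv^*), E(\eps)\rangle = \|\Re(u(\eps)v(\eps)^*)\|\fro$. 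Second, differentiating the identity $\|E(\eps)\|\fro^2 = 1$ gives $\langle E(\eps), E'(\eps)\rangle = 0$, and since $E(\eps)$ is a positive multiple of $\Re(u(\eps)v(\eps)^*)$, this yields $\langle \Re(uv^*), E'(\eps)\rangle = 0$. Combining, $\langle \Re(uv^*), E + \eps E'\rangle = \|\Re(u(\eps)v(\eps)^*)\|\fro$, which gives \eqref{gprime}. I would be careful to note that hypothesis (3) combined with being a rightmost point is exactly what licenses invoking the "no ascent path" conclusion of Theorem~\ref{thm:ode} and hence the first-order characterization $E(\eps) = \Re(u(\eps)v(\eps)^*)/\|\Re(u(\eps)v(\eps)^*)\|\fro$; without this the $\eps E'(\eps)$ term would not drop out cleanly and the formula would carry an extra contribution.
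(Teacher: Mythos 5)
Your proposal is correct and follows essentially the same route as the paper: apply Lemma~\ref{lem:derE} with $\Delta(\eps)=\eps E(\eps)$, use standard eigenvalue perturbation theory to reduce $g'(\eps)$ to $\langle \Re(u(\eps)v(\eps)^*), E(\eps)+\eps E'(\eps)\rangle/(y(\eps)^*x(\eps))$, then use the rightmost-point property together with hypothesis (3) and Theorem~\ref{thm:ode} to conclude $E(\eps)=\Re(u(\eps)v(\eps)^*)/\|\Re(u(\eps)v(\eps)^*)\|\fro$, and kill the $\eps E'(\eps)$ term via $\langle E(\eps),E'(\eps)\rangle=0$. The only cosmetic difference is that you cite the ``no ascent path'' final statement of Theorem~\ref{thm:ode} where the paper argues via $\Re\dot\lambda(0)=0$ for the ODE initialized at $E(\eps)$ and then invokes the equivalence (i)$\Leftrightarrow$(ii); the substance is identical.
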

\begin{proof}
In this proof we again apply Lemma \ref{lem:derE} but with $\Delta(\eps)\equiv\eps E(\eps)$, obtaining
\begin{align*}
       M'(\eps) = & B (I - \eps E(\eps)D)^{-1}(E(\eps) + \eps E'(\eps)) (I-\eps D E(\eps))^{-1}C .
\end{align*}
Again using standard eigenvalue perturbation theory,  we have
\begin{align}
     g'(\eps) & = \Real{\zlam'(\eps)} = \frac{\Real{y(\eps)^*M'(\eps)x(\eps)}}{y(\eps)^*x(\eps)}  \nonumber \\
                  & = \frac{\Real{u(\eps)^*E(\eps)v(\eps)} + \eps\Real {u(\eps)^* E'(\eps) v(\eps)} } {y(\eps)^*x(\eps)} \nonumber \\
                  &=  \frac{\langle E(\eps) , \Re u(\eps)v(\eps)^*\rangle + \eps \langle E'(\eps) , \Re u(\eps)v(\eps)^*\rangle  } {y(\eps)^*x(\eps)}  \label{der-g-eps}     
\end{align}
The solution of the differential equation \eqref{ode-E} for $t\geq 0$ with initial condition $E(\eps)$ results in $\Re \dot \lambda (0) =0 $ as $\lambda(\eps)$ is a rightmost point. Therefore, by Theorem \ref{thm:ode}, as the case $\Re( u(\eps) v(\eps)^*) = 0$ is ruled out by the assumptions,  we have the identity 
   $$E(\eps) = \frac{\Real{ u(\eps) v(\eps)^*}}{\|\Real{ u(\eps) v(\eps)^*}\|\fro}. $$
Plugging this identity into \eqref{der-g-eps} and using the fact that 
$\langle E'(\eps), E(\eps) \rangle = \frac{1}{2} \frac{d \| E(\eps)\|_F^2}{d\eps} = 0$ we conclude that \eqref{gprime} holds.
\end{proof}

We now relate $g_{UV}'(\eps)$ to $g'(\eps)$.

\begin{theo} \label{derivs_same}
Using the notation established above, suppose
the assumptions of the two previous theorems apply for the same open interval $O$ and that for some 
specific $\eps\in O$,
\beq
        UV\tp = E(\eps) = \frac{\Real{u(\eps)v(\eps)^*}}{\|\Real{u(\eps)v(\eps)^*}\|\fro},   \label{UVdef}
\eeq
so that the matrices $M_{UV}(\eps)$ and $M(\eps E(\eps))$ are the same and the eigentriples 
$(\zlam_{UV}(\eps)$, $x_{UV}(\eps)$, $y_{UV}(\eps))$ and $(\zlam(\eps)$, $x(\eps)$, $y(\eps))$ coincide, with $g_{UV}(\eps)=g(\eps)$.
Then
\[
                 g_{UV}'(\eps)=g'(\eps).
\]
\end{theo}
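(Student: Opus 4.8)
The plan is to use hypothesis \eqref{UVdef} to line up the data appearing in the two derivative formulas and then reduce everything to a single orthogonality fact. Under \eqref{UVdef} the matrices $M_{UV}(\eps)$ and $M(\eps E(\eps))$ are literally the same matrix, its rightmost eigenvalue in $\cuhp$ is simple and unique, and the RP-compatibility normalization ($\norms{x}=\norms{y}=1$, $y^*x>0$) removes the residual unimodular ambiguity in the eigenvectors; hence I may take $x_{UV}(\eps)=x(\eps)=:x$ and $y_{UV}(\eps)=y(\eps)=:y$, so that $y_{UV}(\eps)^*x_{UV}(\eps)=y^*x=y(\eps)^*x(\eps)$ and the denominators of \eqref{gUVprime_first} and \eqref{gprime} coincide. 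It therefore suffices to show that the numerators are equal, i.e.\ that $\Real{y^*M_{UV}'(\eps)x}=\normf{\Real{u(\eps)v(\eps)^*}}$ with $u(\eps),v(\eps)$ as in \eqref{uvepsdef}.

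First I would evaluate $\Real{y^*M_{UV}'(\eps)x}$ directly. From the unexpanded form \eqref{firstMUVprime}, $M_{UV}'(\eps)=B(I-\eps UV\tp D)^{-1}UV\tp(I-\eps DUV\tp)^{-1}C$, and since $UV\tp=E(\eps)$ here, the same manipulation used in the proof of \eqref{gprime} — absorbing the outer factors into $u(\eps)$ and $v(\eps)$ — gives $\Real{y^*M_{UV}'(\eps)x}=\Real{u(\eps)^*E(\eps)v(\eps)}=\langle E(\eps),\Real{u(\eps)v(\eps)^*}\rangle$. Because $\Re(u(\eps)v(\eps)^*)\neq0$ by assumption, the equilibrium characterization in Theorem~\ref{thm:ode} applies and yields the identity $E(\eps)=\Real{u(\eps)v(\eps)^*}/\normf{\Real{u(\eps)v(\eps)^*}}$, exactly as in the proof of \eqref{gprime}. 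Substituting gives $\langle E(\eps),\Real{u(\eps)v(\eps)^*}\rangle=\normf{\Real{u(\eps)v(\eps)^*}}$, and dividing by $y^*x$ produces $g_{UV}'(\eps)=\normf{\Real{u(\eps)v(\eps)^*}}/(y^*x)=g'(\eps)$ by \eqref{gprime}.

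I would also record why this is consistent with the derivation of \eqref{gprime}, which used $M'(\eps)=B(I-\eps E(\eps)D)^{-1}(E(\eps)+\eps E'(\eps))(I-\eps DE(\eps))^{-1}C$ rather than $M_{UV}'(\eps)$: the two matrices differ only by the term $\eps B(I-\eps E(\eps)D)^{-1}E'(\eps)(I-\eps DE(\eps))^{-1}C$, which contributes $\eps\,\Real{u(\eps)^*E'(\eps)v(\eps)}=\eps\,\langle E'(\eps),\Real{u(\eps)v(\eps)^*}\rangle=\eps\,\normf{\Real{u(\eps)v(\eps)^*}}\,\langle E'(\eps),E(\eps)\rangle=0$, the last step because $\langle E'(\eps),E(\eps)\rangle=\tfrac{1}{2}\tfrac{d}{d\eps}\normf{E(\eps)}^2=0$, so nothing is lost by dropping the $\eps E'(\eps)$ part. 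The argument is essentially all bookkeeping and I do not expect a genuine analytic obstacle; the one point that deserves care — which I would state explicitly — is the legitimacy of identifying the two eigentriples in \eqref{UVdef}, namely checking that $M_{UV}(\eps)=M(\eps E(\eps))$ there, that the shared rightmost eigenvalue is simple, and that the normalization makes $y^*x$, $u(\eps)^*E(\eps)v(\eps)$ and $u(\eps)^*E'(\eps)v(\eps)$ all independent of the residual phase, so that \eqref{gUVprime_first} and \eqref{gprime} are genuinely being evaluated on the same data.
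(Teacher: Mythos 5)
Your proposal is correct and takes essentially the same route as the paper: both start from the unexpanded form \eqref{firstMUVprime} of $M_{UV}'(\eps)$, substitute $UV\tp=E(\eps)$, absorb the outer factors into $u(\eps)$ and $v(\eps)$ to get $\Real{u(\eps)^*E(\eps)v(\eps)}=\langle E(\eps),\Real{u(\eps)v(\eps)^*}\rangle$, and then invoke the identity $E(\eps)=\Real{u(\eps)v(\eps)^*}/\normf{\Real{u(\eps)v(\eps)^*}}$ to match \eqref{gprime}. Your closing remark on the vanishing of the $\eps E'(\eps)$ contribution is a correct (if redundant) consistency check already implicit in the paper's derivation of \eqref{gprime}.
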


\begin{proof}
Using  \eqref{gUVprime}, \eqref{firstMUVprime} and \eqref{UVdef} 
we have
\[
      g_{UV}'(\eps) = \frac{\Real{y(\eps)^* \left (B (I - \eps E(\eps)\tp D)^{-1}E(\eps) (I-\eps D E(\eps))^{-1}C\right )x(\eps)}}{y(\eps)^*x(\eps)}. 
\]
So, using \eqref{uvepsdef} and \eqref{gprime}, we obtain
\[
                            g_{UV}'(\eps)  = \frac{\Real{u(\eps)^* E(\eps) v(\eps)}}{y(\eps)^*x(\eps)} 
                            = \frac{\|\Real{u(\eps)v(\eps)^*}\|\fro}{y(\eps)^*x(\eps)} = g'(\eps).
\]
\end{proof}

This result is important, because at the start of the contraction phase of Algorithm HEC-RF, assuming that the expansion phase
has returned a locally rightmost point of $\SVSRepsf(A,B,C,D)$, we have that \eqref{UVdef} holds. 
Hence, the first Newton
step of the contraction phase, namely a Newton step for finding a zero of $g_{UV}$, is \emph{equivalent} to a Newton step for 
finding a zero of $g$, which is the ultimate goal. For this reason, under a suitable regularity condition, Algorithm HEC-RF is
actually quadratically convergent. We omit the details here, but a convergence rate analysis similar to that given in \cite[Theorem 4.4]{MitOve15}
for the complex stability radius hybrid expansion-contraction algorithm holds for Algorithm HEC-RF too. 

\section{Discrete-time systems}
\label{sec:discrete}
We now briefly summarize the changes to our results and algorithms that are needed to handle, instead of \eqref{eqn:AB}--\eqref{eqn:CD},
the discrete-time system
\begin{align*}
x_{k+1} & =  Ax_k + Bu_k \\
y_k & =  Cx_k + Du_k
\end{align*}
where $k=1,2,\ldots$. The definitions of the transfer matrix function, spectral value sets and real-$\mu$ functions in Section \ref{subsec:specvalsets}
remain unchanged. In Section \ref{subsec:stabrad}, the stability region is the open unit disk $\oud$ instead of the open left half-plane $\olhp$, 
and the definition of the stability radius must be adjusted accordingly. In Section \ref{subsec:specvalsetabsc}, 
instead of the spectral abscissa $\alpha$ and spectral value set abscissa $\aleps$, we require
the spectral radius $\rho$ and spectral value set radius $\rhoepsKn$, which are defined by maximization of $|\zlam|$ instead 
of $\real{\zlam}$ over the spectral value set.\footnote{Recall again the completely different usage of ``radius" in these names, the stability radius referring to the data space and the spectral
radius to the complex plane.}  Now, instead of ``rightmost" points, we search for ``outermost" points.

In Section \ref{sec:odeframework}, it is convenient to extend Definition \ref{def:eigentriple} as follows:
$(\lambda,x,y)$ is an RP($z$)-compatible eigentriple of $M$ if $\lambda$ is a simple eigenvalue of $M$,
$x$ and $y$ are corresponding normalized right and left eigenvectors and $y^*x$ is a real positive multiple of $z$.
Then, instead of \eqref{eq:optprob2}, we have, taking $(\lambda(t),x(t),y(t))$ to be an RP($\conjg{\lambda}(t)$)-compatible eigentriple,
\[ 
    \frac{d}{dt}|\lambda(t)| =\frac{\Re\conjg{\lambda}(t)\dot{\lambda}(t)}{|\lambda(t)|} = \frac{1}{|y(t)^*x(t)|}\Re(u(t)^* \dot{\Delta}(t) v(t)).
\]
The ODE \eqref{ode-E} then remains unchanged, except that the eigentriple $(\lambda(t),x(t),y(t))$ is an \emph{outermost} 
RP($\conjg{\lambda}(t)$)-compatible eigentriple of $M(\eps E(t))$ instead of a rightmost RP-compatible eigentriple.
Theorem \ref{thm:ode} also holds as before, with the same change. In Section \ref{sec:method}, we 
replace Algorithm SVSA-RF by Algorithm SVSR-RF (Spectral Value Set Radius: Real Frobenius-norm), whose purpose
is to approximate $\rhoepsRf$ $(A,B,C,D)$. The only change that is needed is to replace rightmost RP-compatible
eigentriple $(\lambda_k,x_k,y_k)$ by outermost RP($\conjg\lambda_k$)-compatible eigentriple $(\lambda_k,x_k,y_k)$.
To ensure that $|\zlam_{k+1}| \geq |\zlam_{k}|$, a line search can again be used,
as explained in \cite[Sec.~6.3.3]{Mit14}.  The algorithm produces $\zlam_k$ certifying that $|\zlam_k|\leq \rhoepsRf$ $(A,B,C,D)$.

In Section~\ref{sec:stabrad}, since the stability region is now the open unit disk $\oud$,
instead of \eqref{gdef} we have $g(\eps)=\rhoepsRf$ $(A,B,C,D) - 1$, and instead of \eqref{gUVdef}
we have $g_{UV}(\eps) \coloneqq \rho\left (M_{UV}(\eps) \right) - 1$. The derivatives of $g_{UV}$ in \eqref{gUVprime} 
and $g$ in \eqref{gprime} remain unchanged except for the RP-compatibility change and the replacement of $y^*x$ by $|y^*x|$ in
both denominators. Besides the RP-compatibility definition change, Algorithm HEC-RF is changed as follows:   
rightmost, right half-plane and imaginary axis are changed to outermost, $\C \backslash \D_-$ and unit circle respectively.

\section{Implementation and Experiments}
\label{sec:numerical}
We implemented Algorithm HEC-RF by extending the open-source \matlab\ code \texttt{getStabRadBound} \cite[Section 7]{MitOve15}, which is the implementation of the original HEC algorithm for approximating the complex stability radius.  Our new code supports approximating both the complex and the real Frobenius-norm bound stability radius, for both continuous-time and discrete-time systems, although for brevity, we continue to refer primarily only to the continuous-time case.
We similarly adapted the related fast upper bound algorithm \cite[Section 4.4]{MitOve15}, which aims to quickly find a destabilizing perturbation necessary for initializing Algorithm HEC-RF.  This ``greedy" strategy aims to take steps as large as possible towards a destabilizing perturbation by alternating between increasing $\eps$ and taking a single SVSA-RF update step 
of the perturbation matrices $U$ and $V$.  In the course of this work, we also significantly improved 
the convergence criteria of \texttt{getStabRadBound}.  As these issues are crucial for implementing a practicable and reliable version of the HEC algorithm, but the discussion does not specifically pertain to the real stability radius case, we defer the details to Appendix~\ref{sec:apdx_code}.  
 Lastly, we also made several improvements to help accelerate the algorithm.
  
\subsection{Acceleration features}
\label{sec:code_accel}

First, we extended \texttt{getStabRadBound}'s  feature for implicitly extrapolating the sequences of rank-1 perturbation matrices, produced in the expansion phases when approximating the complex stability radius, to also handle the corresponding rank-2 sequences that may occur when approximating the real stability radius.  To be efficient, the procedure takes a history of $U_k$ and $V_k$ matrices from Algorithm SVSA-RF and then forms four vector sequences corresponding to two selected rows and two selected columns of the evolving $U_kV_k\tp$ matrix.  Vector extrapolation is then applied individually to these four vector sequences to obtain two rows $r_1$ and $r_2$ and two columns $c_1$ and $c_2$ from the extrapolation of the sequence $\{U_kV_k\tp\}$, without ever explicitly forming these matrices.  Using the resulting four vector extrapolations, a new pair $U_\star$ and $V_\star$ are computed such that $\|U_\star V_\star\tp\|_F = 1$.  
For more details, see \cite[Section 6.3.5]{Mit14}.
 
Second, for SVSA-RF, we note that even if a full update step satisfies monotonicity, and thus does not require a line search, it may happen that the line search could still sometimes produce a better update anyway, particularly if \texttt{getStabRadBound}'s interpolating quadratic or cubic-based line search option is enabled; see \cite[Section 4.3]{Mit14}.  As such, even when a full SVSA-RF step satisfies monotonicity, our implementation will check whether an interpolated step might be even better, specifically, by considering the maximum of an interpolating quadratic line search model.  If this quadratic interpolation-derived step is predicted to be at least 1.5 times better than the already computed full update step, the rightmost eigenvalue of the corresponding interpolated perturbation is computed and if it satisfies monotonicity, with respect to the full step, then it is accepted in lieu of the full step. 

Finally, we modified the entire code to only compute left eigenvectors on demand, instead of always computing eigentriples.  If the resulting rightmost 
eigenvalue of $M(\Delta)$ for some perturbation $\Delta$ encountered by the algorithm does not satisfy monotonicity, then computing the corresponding left eigenvector is unnecessary; left eigenvectors need only be computed at points accepted by the algorithm, since it is only at such points that derivatives of eigenvalues are used.  This optimization essentially halves the cost of all incurred line searches in both Algorithm SVSA-RF and step 3 of the fast upper bound procedure, while it also halves the cost of computing extrapolation and interpolation steps that end up being rejected. 

\subsection{Numerical evaluation of Algorithm HEC-RF}
\label{sec:experiments}

\begin{table}
\small
\centering
\begin{tabular}{ l | cc | rr | rr | ll } 
\toprule 
\multicolumn{9}{c}{Small Dense Problems: Continuous-time}\\
\midrule
\multicolumn{1}{c}{} & \multicolumn{2}{c}{Iters} & \multicolumn{2}{c}{\# Eig} & \multicolumn{2}{c}{Time (secs)} & \multicolumn{2}{c}{RSR Approximation} \\
\cmidrule(lr){2-3}  
\cmidrule(lr){4-5}  
\cmidrule(lr){6-7}
\cmidrule(lr){8-9}
\multicolumn{1}{l}{Problem} & 
	\multicolumn{1}{c}{v1} & \multicolumn{1}{c}{v2} &	
	\multicolumn{1}{c}{v1} & \multicolumn{1}{c}{v2} &
	\multicolumn{1}{c}{v1} & \multicolumn{1}{c}{v2} &
	\multicolumn{1}{c}{$\min \{\eps_1,\eps_2\}$} & \multicolumn{1}{c}{$(\eps_1 - \eps_2)/\eps_1$} \\
\midrule

\texttt{CBM} & 3 & 3 & 122 & 79 & 8.646 & 5.577 & $4.46769464697 \times 10^{0}$ & $-8.5 \times 10^{-12}$\\
\texttt{CSE2} & 7 & 7 & 223 & 117 & 0.643 & 0.386 & $4.91783643704 \times 10^{1}$ & \multicolumn{1}{c}{-}\\
\texttt{CM1} & 2 & 3 & 91 & 81 & 0.198 & 0.203 & $1.22474487041 \times 10^{0}$ & \multicolumn{1}{c}{-}\\
\texttt{CM3} & 3 & 4 & 126 & 108 & 1.063 & 0.952 & $1.22290355805 \times 10^{0}$ & \multicolumn{1}{c}{-}\\
\texttt{CM4} & 3 & 4 & 222 & 181 & 8.181 & 6.680 & $6.30978638860 \times 10^{-1}$ & \multicolumn{1}{c}{-}\\
\texttt{HE6} & 11 & 9 & 20852 & 9972 & 13.828 & 8.305 & $2.02865555290 \times 10^{-3}$ & $+1.5 \times 10^{-10}$\\
\texttt{HE7} & 4 & 6 & 492 & 248 & 0.406 & 0.322 & $2.88575420548 \times 10^{-3}$ & $-3.2 \times 10^{-12}$\\
\texttt{ROC1} & 2 & 3 & 93 & 78 & 0.127 & 0.150 & $9.11416570667 \times 10^{-1}$ & $+2.3 \times 10^{-12}$\\
\texttt{ROC2} & 3 & 3 & 98 & 83 & 0.136 & 0.161 & $7.49812117968 \times 10^{0}$ & $+1.0 \times 10^{-10}$\\
\texttt{ROC3} & 4 & 4 & 204 & 117 & 0.211 & 0.209 & $7.68846259016 \times 10^{-5}$ & $-3.5 \times 10^{-11}$\\
\texttt{ROC4} & 1 & 1 & 40 & 40 & 0.084 & 0.134 & $3.47486815789 \times 10^{-3}$ & \multicolumn{1}{c}{-}\\
\texttt{ROC5} & 5.5 & 11 & 263 & 426 & 0.226 & 0.390 & $1.02041223979 \times 10^{2}$ & $-8.0 \times 10^{-9}$\\
\texttt{ROC6} & 4 & 4 & 149 & 80 & 0.174 & 0.182 & $3.88148973329 \times 10^{-2}$ & \multicolumn{1}{c}{-}\\
\texttt{ROC7} & 3 & 3 & 142 & 107 & 0.165 & 0.163 & $8.96564880558 \times 10^{-1}$ & \multicolumn{1}{c}{-}\\
\texttt{ROC8} & 3 & 4 & 160 & 114 & 0.183 & 0.194 & $2.08497314619 \times 10^{-1}$ & $+4.7 \times 10^{-7}$\\
\texttt{ROC9} & 5 & 8 & 235 & 173 & 0.223 & 0.297 & $4.20965764059 \times 10^{-1}$ & \multicolumn{1}{c}{-}\\
\texttt{ROC10} & 1 & 1 & 26 & 26 & 0.079 & 0.096 & $1.01878607021 \times 10^{1}$ & \multicolumn{1}{c}{-}\\

\bottomrule
\end{tabular}
\caption{
The ``Iters" columns show the number of HEC-RF iterations until termination for the ``v1" and ``v2" configurations of \texttt{getStabRadBound}; 
note that these can be fractional since the method may quit
after either a contraction or expansion phase.  The ``\# Eig" columns show the total number of eigensolves (the sum of the number of right and left eigenvectors computed) incurred while the ``Time (secs)" columns show the elapsed wall-clock time in seconds per problem for both code variants.  The left column under the ``RSR Approximation" heading shows the better (smaller) of the two real stability radius approximations
$\eps_1$ and $\eps_2$, respectively computed by ``v1" and ``v2" versions of the code.  The rightmost column show the relative difference between these two approximations, with positive values indicating that the ``v2" code produced a better  approximation.  Relative differences below the 
$10^{-12}$ optimality tolerances used for the code are not shown.
}
\label{table:dense_cont}
\end{table}

\begin{table}
\small
\centering
\begin{tabular}{ l | cc | rr | rr | ll } 
\toprule 
\multicolumn{9}{c}{Small Dense Problems: Discrete-time}\\
\midrule
\multicolumn{1}{c}{} & \multicolumn{2}{c}{Iters} & \multicolumn{2}{c}{\# Eig} & \multicolumn{2}{c}{Time (secs)} & \multicolumn{2}{c}{RSR Approximation} \\
\cmidrule(lr){2-3}  
\cmidrule(lr){4-5}  
\cmidrule(lr){6-7}
\cmidrule(lr){8-9}
\multicolumn{1}{l}{Problem} & 
	\multicolumn{1}{c}{v1} & \multicolumn{1}{c}{v2} &	
	\multicolumn{1}{c}{v1} & \multicolumn{1}{c}{v2} &
	\multicolumn{1}{c}{v1} & \multicolumn{1}{c}{v2} &
	\multicolumn{1}{c}{$\min \{\eps_1,\eps_2\}$} & \multicolumn{1}{c}{$(\eps_1 - \eps_2)/\eps_1$} \\
\midrule

\texttt{AC5} & 3 & 4 & 244 & 197 & 0.250 & 0.234 & $2.01380141605 \times 10^{-2}$ & $+6.3 \times 10^{-12}$\\
\texttt{AC12} & 2 & 2 & 35 & 51 & 0.098 & 0.131 & $9.33096040564 \times 10^{-2}$ & \multicolumn{1}{c}{-}\\
\texttt{AC15} & 5 & 6 & 143 & 94 & 0.178 & 0.179 & $4.22159665084 \times 10^{-2}$ & \multicolumn{1}{c}{-}\\
\texttt{AC16} & 4 & 5 & 119 & 78 & 0.157 & 0.181 & $7.75365184115 \times 10^{-2}$ & \multicolumn{1}{c}{-}\\
\texttt{AC17} & 5 & 5 & 222 & 150 & 0.201 & 0.219 & $3.35508111043 \times 10^{-6}$ & $+4.5 \times 10^{-10}$\\
\texttt{REA1} & 2 & 2 & 77 & 66 & 0.116 & 0.167 & $1.37498793652 \times 10^{-3}$ & \multicolumn{1}{c}{-}\\
\texttt{AC1} & 4 & 5 & 325 & 230 & 0.267 & 0.253 & $7.99003318082 \times 10^{0}$ & \multicolumn{1}{c}{-}\\
\texttt{AC2} & 3 & 4 & 61 & 57 & 0.111 & 0.171 & $3.36705685350 \times 10^{0}$ & \multicolumn{1}{c}{-}\\
\texttt{AC3} & 4 & 4 & 427 & 297 & 0.305 & 0.329 & $7.43718998002 \times 10^{-2}$ & \multicolumn{1}{c}{-}\\
\texttt{AC6} & 5 & 9.5 & 253 & 366 & 0.215 & 0.356 & $2.32030683553 \times 10^{-8}$ & $+2.6 \times 10^{-1}$\\
\texttt{AC11} & 5 & 3 & 198 & 113 & 0.213 & 0.167 & $5.21908412146 \times 10^{-8}$ & $-2.5 \times 10^{-8}$\\
\texttt{ROC3} & 4 & 5 & 204 & 187 & 0.209 & 0.264 & $5.30806020326 \times 10^{-2}$ & \multicolumn{1}{c}{-}\\
\texttt{ROC5} & 6 & 5 & 280 & 176 & 0.246 & 0.306 & $2.85628817204 \times 10^{-4}$ & $-1.7 \times 10^{-10}$\\
\texttt{ROC6} & 5 & 7 & 324 & 111 & 0.269 & 0.239 & $5.81391974240 \times 10^{-2}$ & $+1.8 \times 10^{-1}$\\
\texttt{ROC7} & 4 & 4 & 68 & 55 & 0.115 & 0.161 & $9.01354011348 \times 10^{-1}$ & \multicolumn{1}{c}{-}\\
\texttt{ROC8} & 3 & 6 & 134 & 119 & 0.163 & 0.217 & $2.08192687301 \times 10^{-5}$ & $+1.6 \times 10^{-10}$\\
\texttt{ROC9} & 3 & 4 & 137 & 101 & 0.160 & 0.177 & $4.07812890254 \times 10^{-2}$ & \multicolumn{1}{c}{-}\\
\bottomrule
\end{tabular}
\caption{
See caption of Table~\ref{table:dense_cont} for the description of the columns.
}
\label{table:dense_disc}
\end{table}

\begin{table}
\small
\centering
\begin{tabular}{ l | cc | rr | rr | ll } 
\toprule 
\multicolumn{9}{c}{Large Sparse Problems: Continuous-time (top), Discrete-time (bottom)}\\
\midrule
\multicolumn{1}{c}{} & \multicolumn{2}{c}{Iters} & \multicolumn{2}{c}{\# Eig} & \multicolumn{2}{c}{Time (secs)} & \multicolumn{2}{c}{RSR Approximation} \\
\cmidrule(lr){2-3}  
\cmidrule(lr){4-5}  
\cmidrule(lr){6-7}
\cmidrule(lr){8-9}
\multicolumn{1}{l}{Problem} & 
	\multicolumn{1}{c}{v1} & \multicolumn{1}{c}{v2} &	
	\multicolumn{1}{c}{v1} & \multicolumn{1}{c}{v2} &
	\multicolumn{1}{c}{v1} & \multicolumn{1}{c}{v2} &
	\multicolumn{1}{c}{$\min \{\eps_1,\eps_2\}$} & \multicolumn{1}{c}{$(\eps_1 - \eps_2)/\eps_1$} \\
\midrule

\texttt{NN18} & 1 & 2 & 27 & 37 & 1.833 & 2.430 & $9.77424680376 \times 10^{-1}$ & \multicolumn{1}{c}{-}\\
\texttt{dwave} & 2 & 4 & 72 & 59 & 32.484 & 28.794 & $2.63019715625 \times 10^{-5}$ & \multicolumn{1}{c}{-}\\
\texttt{markov} & 2 & 3 & 61 & 56 & 12.581 & 10.479 & $1.61146532880 \times 10^{-4}$ & \multicolumn{1}{c}{-}\\
\texttt{pde} & 4 & 5 & 128 & 79 & 4.670 & 3.011 & $2.71186478815 \times 10^{-3}$ & \multicolumn{1}{c}{-}\\
\texttt{rdbrusselator} & 2 & 3 & 50 & 45 & 4.517 & 4.402 & $5.47132014748 \times 10^{-4}$ & \multicolumn{1}{c}{-}\\
\texttt{skewlap3d} & 2 & 2 & 103 & 78 & 115.004 & 90.605 & $4.59992022215 \times 10^{-3}$ & \multicolumn{1}{c}{-}\\
\texttt{sparserandom} & 2 & 2 & 90 & 74 & 3.056 & 2.655 & $7.04698184529 \times 10^{-6}$ & $-3.5 \times 10^{-10}$\\

\midrule

\texttt{dwave} & 2 & 4 & 34 & 33 & 14.776 & 14.397 & $2.56235064981 \times 10^{-5}$ & \multicolumn{1}{c}{-}\\
\texttt{markov} & 3 & 3 & 73 & 64 & 15.740 & 14.056 & $2.43146945130 \times 10^{-4}$ & \multicolumn{1}{c}{-}\\
\texttt{pde} & 2 & 2 & 46 & 35 & 1.713 & 1.450 & $2.77295935785 \times 10^{-4}$ & \multicolumn{1}{c}{-}\\
\texttt{rdbrusselator} & 3 & 5 & 76 & 60 & 5.926 & 5.041 & $2.56948942080 \times 10^{-4}$ & \multicolumn{1}{c}{-}\\
\texttt{skewlap3d} & 2 & 3 & 50 & 52 & 53.297 & 50.526 & $3.40623440406 \times 10^{-5}$ & \multicolumn{1}{c}{-}\\
\texttt{sparserandom} & 2 & 2 & 21 & 19 & 1.035 & 0.880 & $2.53298721605 \times 10^{-7}$ & $-3.5 \times 10^{-8}$\\
\texttt{tolosa} & 3 & 3 & 98 & 53 & 10.097 & 6.251 & $2.14966549184 \times 10^{-7}$ & $-6.5 \times 10^{-12}$\\

\bottomrule
\end{tabular}
\caption{
See caption of Table~\ref{table:dense_cont} for the description of the columns.
}
\label{table:sparse}
\end{table}

We tested our new version of \texttt{getStabRadBound} on the 34 small-scale and 14 large-scale linear dynamical systems 
used in the numerical experiments of \cite{GuGuOv13} and \cite{MitOve15}, noting that the system matrices $(A,B,C,D)$ for these problems are all real-valued.  We ran the code in two different configurations for each problem: once in its ``pure" HEC form, which we call ``v1" and which should converge quadratically, and a second time using an ``accelerated" configuration, which we call ``v2".  This latter configuration enabled both of the interpolation and extrapolation features described above as well as the code's early contraction and expansion termination conditions, both of which aim to encourage the code to accept inexact and cheaply-acquired solutions to the subproblems when more accuracy is neither needed nor useful.  The early expansion termination feature is described in \cite[Section 4.3]{MitOve15} and often greatly lessens the overall computational cost, despite the fact that it reduces the algorithm's theoretical convergence rate from quadratic to superlinear.  We used $0.01$ as the relative tolerances for governing these early contraction/expansion features.  The expansion and contraction optimality tolerances were both set to $10^{-12}$; these two tolerances act together to determine an overall tolerance for the HEC iteration.  The ``v2" configuration was set to attempt  extrapolation every fifth iteration, from the previous five iterates.  For all other parameters, we used \texttt{getStabRadBound}'s default user options.  All experiments were performed using \matlab\ R2015a running on a Macbook Pro with an Intel i7-5557U dual-core CPU and 16GB of RAM, running Mac OS X v10.11.5.

\begin{figure}[!h]
\begin{tabular}{@{} c @{} c @{}}
\includegraphics[scale=.41]{./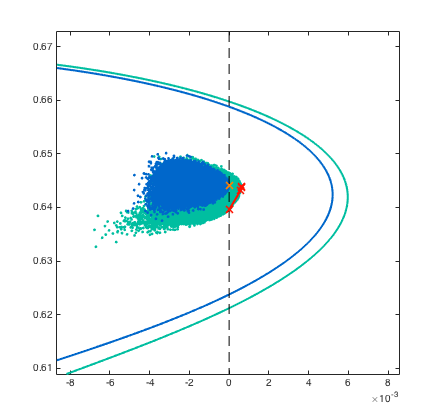} &
\includegraphics[scale=.41]{./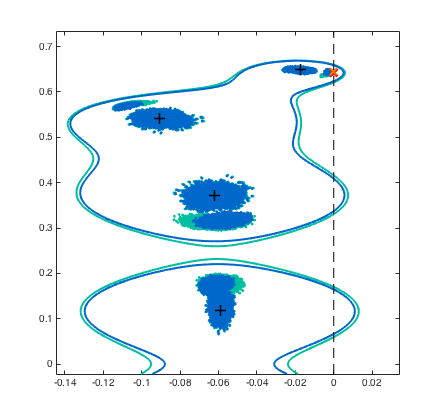} 
\end{tabular}
\caption{
Test problem \texttt{ROC3}.  
Selected iterates of Algorithm HEC-RF, namely the first and last expansion phases 
($\eps_1$ and $\eps_{4}$), are depicted as two sequences of x's connected by line segments,
respectively in red and orange, in a close-up view (left) and in a wide view (right).
The corresponding sets $\SVSRepsf(A,B,C,D)$  
were realized by plotting points of $\spec(M(\eps U_rV_r\tp))$, in green for $\eps=\eps_1$ and in blue for $\eps=\eps_4$, using many rank-1 and rank-2 ``sample" matrices $U_rV_r\tp$ with $\|U_rV_r\tp\|_F = 1$.  
Specifically, for each value of $\eps$, we used 100,000 randomly generated matrices $U_rV_r\tp$ (using \texttt{randn()}), another 100,000 generated via quasi-random Sobol sequences, and 100,000 randomly perturbed versions of the expansion phases' sequences of matrices.  The 200,000 random and quasi-random samples were unable to capture the region near the locally rightmost point found by Algorithm HEC-RF; the 
points from these samples only appear in the wider view on the right, in small regions about the eigenvalues of $A$ (represented by the black +'s). 
The sample points shown in the close-up view on the left are all from the randomly perturbed versions of the expansion phases' matrix iterates, demonstrating Algorithm HEC-RF ability to efficiently find extremal rightmost values in real-valued spectral value sets.  The solid curves depict the boundaries of the corresponding sets $\SVSCepss(A,B,C,D)$ and were computed by \matlab's \texttt{contour}.  As can be readily seen, 
the iterates of Algorithm HEC-RF converged to a locally rightmost of 
$\SVSRepsf(A,B,C,D)$, $\eps=\eps_4$, close to the imaginary axis (represented by the dashed vertical line) and in the interior of $\SVSCepss(A,B,C,D)$.
}
\label{fig:example_roc3}
\end{figure}

In order to assess whether Algorithm SVSA-RF converges to locally rightmost points, we have relied upon plotting approximations of $\SVSRepsf(A,B,C,D)$ in the complex plane using various random sampling and perturbation techniques (in contrast to the case of the complex stability radius, where the boundaries of $\SVSCepss(A,B,C,D)$ can be plotted easily).  For each of the 34 small-scale problems, we plotted the iterates of the expansion phases along with our $\SVSRepsf(A,B,C,D)$ approximations for the corresponding values of $\eps$ and examined them all by hand, observing that Algorithm SVSA-RF indeed does converge to locally rightmost points as intended, at least to the precision that can be assessed from such plots.  See Figures~\ref{fig:example_roc3} and \ref{fig:example_roc9} for two such plots.

\begin{figure}[!h]
\begin{tabular}{@{} c @{} c @{}}
\includegraphics[scale=.41]{./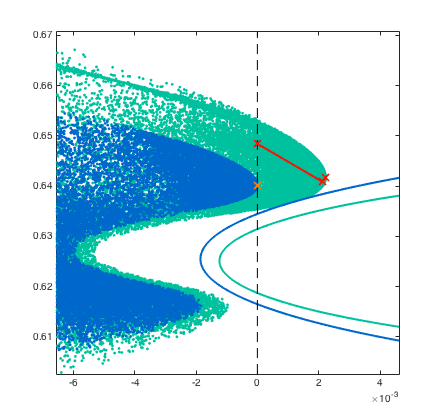} &
\includegraphics[scale=.41]{./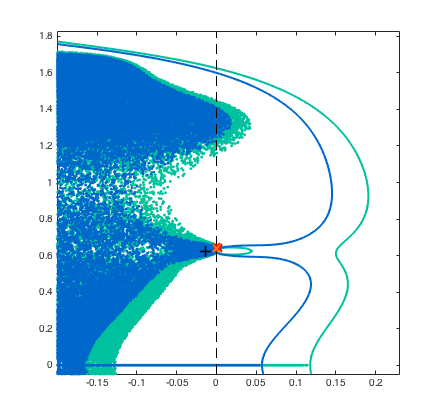} 
\end{tabular}
\caption{
Test problem \texttt{ROC9}.  
Selected iterates of Algorithm HEC-RF, namely the first and last expansion phases 
($\eps_1$ and $\eps_{5}$), are depicted as two sequences of x's connected by line segments,
respectively in red and orange, in a close-up view (left) and in a wide view (right).
The corresponding sets of point clouds for $\SVSRepsf(A,B,C,D)$ and set boundaries of $\SVSCepss(A,B,C,D)$,  in green for $\eps=\eps_1$ and in blue for $\eps=\eps_5$, were plotted in a similar manner as described in Figure~\ref{fig:example_roc3}.  The black + represents an eigenvalue of $A$.  
As can be seen, the iterates of Algorithm HEC-RF converged to a locally rightmost point of 
$\SVSRepsf(A,B,C,D)$, with $\eps=\eps_5$, close to the imaginary axis (represented by the dashed vertical line),
though in this case, it is clear that this is not a globally rightmost point.
Interestingly, the corresponding sets $\SVSCepss(A,B,C,D)$ have 
no locally rightmost points near the sequence of locally rightmost points of $\SVSRepsf(A,B,C,D)$ found by Algorithm HEC-RF, highlighting the striking difference between real-valued and complex-valued spectral value sets.  In fact,
for $\eps = \eps_1$, it is seen that $\SVSCepss(A,B,C,D)$ actually has a hole below and to the right of the locally rightmost point of $\SVSRepsf(A,B,C,D)$ found by Algorithm HEC-RF; the hole is depicted by the small green ellipse in the right plot, a portion of which can be seen in the left plot.
}
\label{fig:example_roc9}
\end{figure}

 As Algorithm HEC-RF is, to the best of our knowledge, the only available method to approximate the real Frobenius-norm bounded stability radius, we simply report the resulting upper bounds produced by our method to 12 digits for each test problem, along with statistics on the computational cost, in Tables~\ref{table:dense_cont}-\ref{table:sparse}.  We observe that both variants of the code tend to produce 
 approximations with high agreement, showing that there seems to be little to no numerical penalty for enabling the acceleration features.  In fact, on two  examples (\texttt{AC6} and \texttt{ROC6}, both discrete-time systems), we see that the accelerated version of the code actually produced substantially better approximations, with improvement to their respective second-most significant digits.  Furthermore, the accelerated ``v2" 
 configuration does appear to be effective in reducing the number of eigensolves incurred on most problems, though there are two notable exceptions to 
 this: \texttt{ROC5} (continuous-time) and \texttt{ROC6} (discrete-time).  It is worth noting that many of the small-scale test problems have such tiny dimensions that a reduction in eigensolves doesn't always correspond with a speedup in terms of wall-clock time (and can sometimes seemingly paradoxically have increased running times due to the inherent variability in collecting timing data).  However, on only moderate-sized problems, such as \texttt{CBM}, \texttt{CSE2}, and \texttt{CM4} (all continuous-time), we start to see the correspondence between number of eigensolves and running time approaching a one-to-one relationship.  This correspondence is readily apparent in the large and sparse examples in Table~\ref{table:sparse}.
 
Though it is difficult to tease out the effects of the different acceleration options, since they interact with each other, we were able to determine that the early expansion  termination feature was usually the dominant factor in reducing the number of eigensolves.  However, extrapolation was crucial for the large gains observed on \texttt{HE6} (continuous-time) and \texttt{ROC6} (discrete-time).   By comparison, in \cite{MitOve15}, when using HEC to approximate the complex stability radius, extrapolation tended to be much more frequently beneficial while usually providing greater gains as well.  Part of this disparity may be because of the greatly increased number of eigensolves we observed when running \texttt{getStabRoundBound} to approximate the complex stability radius as opposed to the real stability radius; on the 34 small-scale problems, the complex stability radius variant incurred 1226 more eigensolves per problem on average, with the median being 233 more.  In our real stability radius experiments, \texttt{HE6} notwithstanding, Algorithm SVSA-RF simply did not seem to incur slow convergence as often nor to the same severity as its rank-1 counterpart for complex-valued spectral value sets.  We note that our ODE-based approach for updating real rank-2 Frobenius-norm bounded perturbations  underlying Algorithm SVSA-RF also provides a new expansion iteration for complex spectral value sets; in Appendix~\ref{sec:complex_svsa}, we evaluate the performance of this new variant when approximating the complex stability radius.

\subsection{New challenges for the real stability radius case}

Over the test set, only a handful of examples triggered our new rank-2 extrapolation routine: continuous-time problems \texttt{ROC1} and \texttt{ROC3} and
discrete-time problems \texttt{AC5}, \texttt{AC1}, \texttt{AC3}, \texttt{ROC3}, and \texttt{ROC5}.  Of these seven, the code only produced a successful extrapolation 
for \texttt{ROC1}, which is seemingly not a promising result for the rank-2 extrapolation procedure.   However, perhaps none of these problems ended up being particularly good candidates for evaluating the rank-2 extrapolation procedure; their respective total number of perturbation updates to $U_kV_k\tp$, with all acceleration features disabled, was at most 160 updates (\texttt{AC3}), with the average only being 70.7.  Simply put, these problems provided little opportunity for any extrapolation, yet alone need.

As an alternative to recovering the aforementioned normalized $U_\star$ and $V_\star$ matrices by the direct procedure described in Section~\ref{sec:code_accel},
we also considered specifying it as a constrained optimization problem.  For notational convenience, we assume that $r_1$, $r_2$ and $c_1$, $c_2$ are respectively the first and second rows and columns of the implicitly extrapolated matrix of $\{U_kV_k\tp\}$.  To recover $U_\star$ and $V_\star$ from the extrapolated rows $r_1$, $r_2$ and columns $c_1$, $c_2$, we instead solve the following constrained optimization problem
\beq
\label{eq:extrap_opt}
\min_{\|UV\tp\|_F = 1} 
\left\| 
\begin{aligned} 
\begin{bmatrix} c_1 & c_2 \end{bmatrix} & - U V(1:2,:)\tp \\ 
\begin{bmatrix} r_1(3:\texttt{end})\\ r_2(3:\texttt{end}) \end{bmatrix}  & - U(1:2,:)V(3:\texttt{end},:)\tp
\end{aligned}								
\right\|_2.
\eeq
We derived the gradients of the above objective function and equality constraint and used them with both \matlab's \texttt{fmincon} and \granso: GRadient-based Algorithm for Non-Smooth Optimization \cite{CurMitOve16}, to solve
\eqref{eq:extrap_opt}.  We observed that the vectors obtained by our direct procedure frequently made excellent starting points for solving \eqref{eq:extrap_opt}, often greatly reducing the iteration numbers incurred by \texttt{fmincon} and \granso\ compared to initializing these codes from randomly generated starting points.  Regardless of the starting points employed, the optimization routines typically only found solutions that reduced the objective function of \eqref{eq:extrap_opt} by at most an order of magnitude compared to the solutions obtained by the direct procedure, and the resulting extrapolations were generally no better in terms of acceptance rate than the ones produced by our direct procedure.  This seems to either confirm that these particular problems are poorly suited for evaluating extrapolation, as suggested above, or indicate that it is perhaps the quality, or lack thereof, of the vector extrapolations themselves, rows $r_1$, $r_2$ and columns $c_1$, $c_2$, that is causing the difficulties in producing good rank-2 extrapolations.   We examined the vector sequences used to create $r_1$, $r_2$ and $c_1$, $c_2$ and did see significant oscillation, which appears to be a property of the iterates of Algorithm SVSA-RF itself, at least for these particular problems; this oscillation may be an additional difficulty to overcome for improved rank-2 extrapolation performance, but we leave such investigation for future work.

\begin{figure}[!h]
\centering
\begin{tabular}{@{} c @{} c @{}}
\includegraphics[scale=.36]{./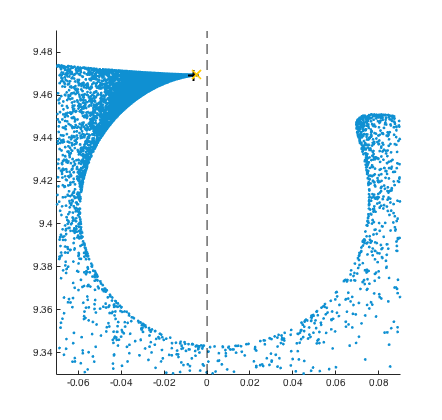} &
\includegraphics[scale=.36]{./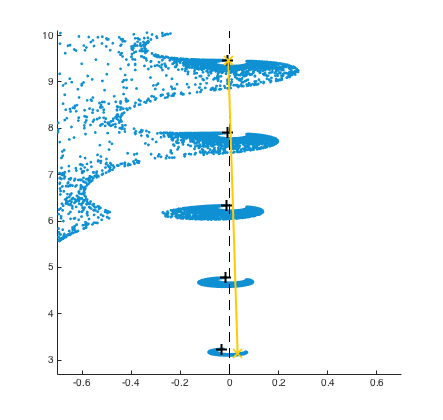}\\
\includegraphics[scale=.36]{./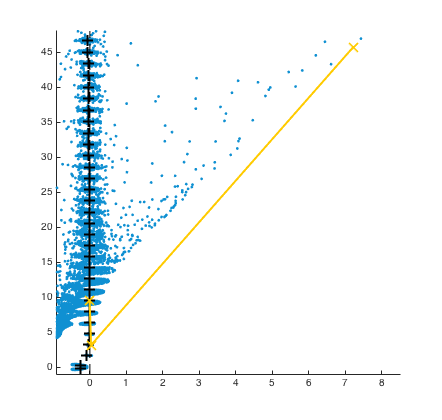} &
\includegraphics[scale=.36]{./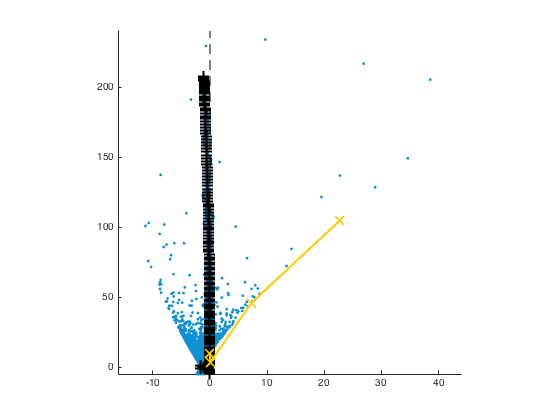}\\
\end{tabular}
\caption{
Test problem \texttt{CM4}.
Successively wider views (left-right, top-to-bottom) of $\SVSRepsf(A,B,C,D)$ (realized by the blue dot samples and generated in a similar manner as described in Figure~\ref{fig:example_roc3}) showing selected iterates of the upper bound procedure (yellow x's connected 
by line segments)  with $\eps$ near its limit of $\|D\|^{-1}$.  The black +'s are eigenvalues of $A$ while the black dashed line is the imaginary axis.
Top left: the expansion phase of the upper bound procedure has nearly converged to a locally rightmost point that is 
just to the right of an eigenvalue of $A$ but this region of $\SVSRepsf(A,B,C,D)$ is always contained in the left half-plane 
due to the limit $\eps < \|D\|^{-1}$.  Top right: this highly nonconvex ``horseshoe" structure is repeated in multiple places; on the next expansion step, the routine was able to ``jump" to a different region of $\SVSRepsf(A,B,C,D)$ that is in the right half-plane.
Bottom left: on the next step after that, the expansion phase again jumps out of one region to another,
this time significantly farther to the right; though technically an upper bound had already been found, the routine continued to iterate to better locate where a minimal destabilizing perturbation may lie.  Bottom right: zooming out further, we see
that $\SVSRepsf(A,B,C,D)$ is much larger than was initially apparent.
}
\label{fig:ub_traj}
\end{figure}

Lastly, we noticed that the fast upper bound procedure had some difficulty before it was able to find a destabilizing perturbation for problem \texttt{CM4} (continuous-time).  
Generally, we have found that the upper bound procedure can find a destabilizing perturbation within a handful of iterations, but on \texttt{CM4}, it
took 23 iterations.  In Figure~\ref{fig:ub_traj}, we show plots of $\SVSRepsf(A,B,C,D)$ for the largest value of $\eps$ obtained in the upper bound procedure, along with selected iterates of the routine corresponding to that value of $\eps$.  As is apparent from the plots, part of the difficulty in finding an upper bound is due to the highly nonconvex ``horseshoe" shapes that create locally rightmost points in the left half-plane for 
values of $\eps$ near its upper bound $\|D\|^{-1}$.
The expansion routine had converged to such a point and then iteratively increased epsilon to be near its upper bound in vain.  However, and surprisingly, on the 23rd iteration of the upper bound procedure, the expansion phase was actually able to jump out of this
region and land in the right half-plane to find a destabilizing perturbation and thus an upper bound.  Even though the routine had essentially already converged to a perturbation corresponding to this locally rightmost point in the left half-plane, the routine still produced another update step to try, but this update step was nearly identical to the current perturbation, because further rightward continuous progress was not possible.  The full update step failed to satisfy monotonicity so the line search was invoked with an initial interpolation of $t=0.5$ and as a result, the resulting unnormalized interpolation of the current perturbation and the full update step perturbation nearly annihilated each other.   When that interpolation was renormalized back to have unit Frobenius norm, as is necessary, the resulting perturbation was very different than the current perturbation (as well as the full update step), which thus allowed the algorithm to jump to an entirely different disconnected region of the spectral value set.  We note that Algorithm SVSA-RF can also ``jump" when a new perturbation just happens to result in a second eigenvalue of $A$ being taken farther to the right than the eigenvalue it had intended to push rightward.

\section{Conclusion}
\label{sec:conclusion}
We have presented an algorithm that, to our knowledge, is the first method available to approximate the real stability radius of a linear dynamical system with inputs and outputs defined using Frobenius-norm bounded perturbations. It is efficient even in the large scale case, and since it generates destabilizing perturbations explicitly, it produces guaranteed upper bounds on the real stability radius.  The hybrid expansion-contraction method works by alternating between (a) iterating over a sequence of destabilizing perturbations of fixed norm $\eps$ to push an eigenvalue of the corresponding perturbed system matrix as far to the right in the complex plane as possible and (b) contracting $\eps$ to bring the rightmost eigenvalue back to the imaginary axis. The final computed eigenvalue is very close to the imaginary axis and is typically at least a locally rightmost point of the corresponding $\eps$-spectral value set. The method is supported by our theoretical results for the underlying ODE that motivates the method, and our computational results are validated by extensive random sampling techniques. The method has been implemented in our open-source \matlab\ code \texttt{getStabRadBound}.



\medskip
\bibliographystyle{alpha}
\bibliography{rsr_refs}  

\begin{appendices}
\section{Notes on implementing HEC}
\label{sec:apdx_code}
We use HEC here to refer to the original HEC algorithm \cite{MitOve15} and Algorithm HEC-RF since the 
issues outlined in this section apply equal to both methods.
In theory, HEC converges once a locally rightmost point $\lambda$ with $\Real{\lambda} = 0$ has been 
found.    In practice however, 
it is not so straightforward.  Indeed, we have discovered that the HEC convergence criteria described in 
\cite[Section 7.1]{MitOve15}, using the expansion phase stopping condition proposed in \cite[Section 5]{GuGuOv13},
can sometimes be inadequate.  We briefly recap these conditions and then present improved criteria, which 
we have used in our new version of \texttt{getStabRadBound} and all experiments in this paper.

To ensure that iterates always remain in the right half-plane (which is necessary for provable convergence of HEC), 
the Newton-bisection based contraction phase is set to find a point on the line $x=\tau_\eps/2$, 
where $\tau_\eps > 0$ is the contraction tolerance.
The contraction is said to have converged if it finds a point $\lambda_c \in [0,\tau_\eps)$.  This 
shift permits either right or left-sided convergence;
for the unshifted problem, i.e. $x=0$, convergence from the left would almost always fail to satisfy the
requirement that HEC points remain in the right half-plane.
 The expansion phase then pushes rightward from $\lambda_0 \coloneqq \lambda_c$ with iterates $\lambda_k$, where $\real{\lambda_{k+1}} > \real{\lambda_k}$
 holds for all $k \ge 0$ (due to the line search ensuring monotonicity).
 In \cite{GuGuOv13}, it was proposed that the expansion phase should be halted once
$\real{\lambda_{k+1}} - \real{\lambda_k} < \tau_{uv} \max(1, \real{\lambda_{k}})$ is satisfied.\footnote{Note that 
compared to  \cite{GuGuOv13}, we have dropped 
the absolute value signs here, since in the context of HEC, all points lie in the right half-plane.}
In \cite{MitOve15}, HEC was then said to have converged if either (a) the expansion phase returned a point $\lambda$ such 
that $\real{\lambda} < \tau_\eps + \tau_{uv}$, where $\tau_{uv}  > 0$ is the expansion tolerance, or (b) if the 
expansion and contraction phases failed consecutively, in either order.  We now
discuss the inadequacies and propose improved conditions.

First, we make the change that HEC is said to have converged if $\real{\lambda} \in [0,2\tau_\eps)$,
where $\lambda$ is a locally rightmost point of the associated spectral value set encountered by HEC.  
Unfortunately, detecting whether $\lambda$ is locally rightmost is somewhat problematic due
to the lack of an efficiently computable optimality measure.
Furthermore, the expansion phase stopping condition described above may cause expansion to halt prematurely, 
which in turn can cause HEC to return an unnecessarily higher value of $\eps$, that is, a worse approximation.  
Part of this is due to the fact that when the expansion iterates have real part less than one, the condition only 
measures an absolute difference between consecutive steps, which is often a rather poor optimality measure.  
Furthermore, by only measuring
the difference between the real parts, it fails to capture change in the imaginary parts, which if present, would strongly indicate
that a locally rightmost point has not yet been reached.  To address both concerns, we instead propose to measure 
the relative change \emph{in the complex plane} between consecutive iterates, that is, 
$| \lambda_{k+1} - \lambda_k | / | \lambda_k |$, and to halt the expansion phase when this value falls below $\tau_{uv}$.  
Doing so however creates two additional wrinkles that must now be addressed.  The condition is not meaningful
when either point is exactly zero.  Therefore, we simply skip this particular check if either is indeed zero, noting that it can, at
most, only add two additional steps to be taken (provided no other termination condition is triggered).
Also, we must prevent ``false" oscillation in imaginary parts from being measured due to complex conjugacy; 
when matrices $A$, $B$, $C$, and $D$ are all real, we simply ensure that distance is
measured between eigenvalues in the upper half-plane only, by flipping signs of the imaginary parts as necessary.

\begin{table}
\small
\centering
\begin{tabular}{c p{11.8cm}} 
\toprule 
\multicolumn{2}{c}{Simplified list of expansion/contraction termination possibilities}\\
\midrule
\multicolumn{2}{l}{Contraction Phase:}\\
\texttt{0:} & \texttt{maxit reached though some contraction achieved}\\
\texttt{1:} & \texttt{point with real part in $[0,\tau_\eps)$ attained}\\
\texttt{2:} & \texttt{significant contraction achieved in right half-plane, halted early}\\
\texttt{3:} & \texttt{desired point is bracketed by two consecutive floating point numbers (no convergence criteria satisfied)}\\
\midrule
\multicolumn{2}{l}{Expansion Phase:}\\
\texttt{0:} & \texttt{maxit reached}\\
\texttt{1:} & \texttt{relative difference, in $\C$, between $\lambda_{k+1}$ and $\lambda_k$ is less than $\tau_{uv}$}\\
\texttt{2:} & \texttt{step length $\real{\lambda_{k+1}} - \real{\lambda_k}$ has significantly shortened, halted early}\\
\texttt{3:} & \texttt{line search failed to produce a monotonic step}\\
\bottomrule
\end{tabular}
\caption{For both phases, return code 1 is used to indicate that the respective desired convergence
has been achieved, while return code 2 indicates that the early termination features were invoked.  For the contraction
phase, return code 3 indicates that the precision limits of the hardware precludes satisfying the convergence criteria, i.e.,
it is a hard failure.  For the expansion phase, return code 3 can be interpreted as a sign of convergence, as 
further rightward progress is apparently no longer possible.
}
\label{table:return_codes}
\end{table}

Another major issue is that, in practice, both the contraction and expansion phases may terminate in a multitude of ways 
without satisfying their respective convergence criteria, including our improved expansion-halting condition above.  
In Table~\ref{table:return_codes}, we give a simplified list of these termination possibilities.  We now describe how
to interpret the combination of these possibilities that can occur and how their consequences should be handled and
present a concise pseudocode for the resulting practical implementation of HEC.

\begin{algfloat}
\begin{algorithm}[H]
\floatname{algorithm}{}
\caption*{\textbf{Pertinent pseudocode for a practical HEC implementation}}
\label{alg:hec_practical}
\begin{algorithmic}[1]
	\REQUIRE{  
		$\eps_0 > 0$, rightmost $\lambda_0 \in \sigma(M(\eps_0 UV^*))$ such that $\real{\lambda_0} > 0$
		with $\|UV^*\| = 1$ for the chosen norm and $U \in \C^p$, $V \in \C^m$ or  
		$U \in \Rmn{p}{2}$, $V \in \Rmn{m}{2}$ for respectively
		approximating the complex or real stability radius radius,
		initial contraction bracket given by $\epslb \coloneqq 0$
		 and $\epsub \coloneqq \eps_0$, boolean variable \texttt{expand\_converged $\coloneqq$ false}, $\ldots$
	}\\
	\ENSURE{ 
		Final value of sequence $\{\eps_k\}$.
	}
	\quad \\ \quad \\
	
	\FOR {$k = 0,1,2,\ldots$} 

		\STATE $[\eps_\mathrm{c}, \lambda_\mathrm{c}, \epslb, \epsub, \texttt{ret\_con}] =$ \texttt{contract($\epslb,\epsub,\lambda_k,\ldots$)}
		\COMMENT{where:}
		\STATE \COMMENT{\qquad $0 < \hat\eps \le \eps$ is the possibly contracted value of $\eps$}
		\STATE \COMMENT{\qquad  $0 \le \real{\lambda_\mathrm{c}} \le \real{\lambda_k}$ is the possibly contracted eigenvalue}
		\STATE \COMMENT{\qquad possibly updated $\epslb$ and $\epsub$ giving the tightest bracket encountered}
		\STATE \COMMENT{\qquad \emph{\texttt{ret\_con}} is the contraction's return code from Table~\ref{table:return_codes}}
		\STATE \COMMENT{Check if no contraction was possible (precision of hardware exhausted)}
		\IF{\texttt{ret\_con} $==3$ \AND \texttt{expand\_converged} }
			\IF{$\real{\lambda_c} < 2 \cdot \tau_\eps$}
				\RETURN \COMMENT{HEC converged to tolerance}
			\ELSE
				\RETURN \COMMENT{HEC stagnated}
			\ENDIF
		\ENDIF	
		\STATE $\eps_{k+1} \coloneqq \eps_\mathrm{c}$ 

		\STATE $[\lambda_{k+1},\texttt{ret\_exp}] =$ \texttt{expand($\eps_{k+1},\lambda_\mathrm{c},\ldots$)}	
		\COMMENT{where:}
		\STATE \COMMENT{\qquad $\real{\lambda_{k+1}} \ge \real{\lambda_\mathrm{c}}$}
		\STATE \COMMENT{\qquad \emph{\texttt{ret\_exp}} is the expansion's return code from Table~\ref{table:return_codes}}
		\STATE \texttt{expand\_converged $\coloneqq$ (\texttt{ret\_exp} $==1$ \OR \texttt{ret\_exp} $==3$)}	
		\IF{\texttt{expand\_converged} \AND $\real{\lambda_{k+1}} < 2 \cdot \tau_\eps$ }
			\RETURN \COMMENT{HEC converged to tolerance}
		\ELSIF {$\real{\lambda_{k+1} - \lambda_\mathrm{c}} > 0 $ }
			\STATE $\epslb \coloneqq 0$, $\epsub \coloneqq \eps_{k+1}$ \, \COMMENT{Expansion made some progress; do new contraction}
		\ELSIF {\texttt{ret\_con} $==3$}
			\RETURN \COMMENT{HEC stagnated}
		\ENDIF \, \COMMENT{Else contraction will be resumed/restarted from where it last left off} 
		
	\ENDFOR
\end{algorithmic}
\end{algorithm}
\end{algfloat}

We have designed the contraction procedure so that 
reaching its maximum allowed iteration count will  only cause it to actually halt iterating if it has also achieved some
amount of contraction, that is, it has encountered at least one point $\tilde\lambda$ such that $0 \le \real{\tilde\lambda} \le \lambda$,
where $\lambda$ is the initial point.  
This seemingly unconventional behavior has the two benefits that the only case when it doesn't make any progress is when it is 
impossible to do so (i.e. when it exhausts the machine's precision) and a sufficiently large maximum iteration limit to
find a first contraction step no longer needs to be known a priori, which is generally not possible.
If the contraction routine has made progress (no matter the termination condition), then additional 
expansion is always potentially possible and so the next expansion phase must be attempted.  Furthermore, even if the contraction phase 
failed to make any progress, but if the previous expansion did not converge, then the next expansion phase should 
attempt to \emph{resume} it since further expansion is apparently possible and may enable the subsequent contraction phase 
to finally make progress.  
The only case remaining is when the contraction phase failed to make any progress
(by reaching the limits of the hardware) \emph{after} having had the preceding expansion phase converge (meaning it would not be able make 
further progress if it were to be rerun with the same value of $\eps$).  In this situation, HEC can no longer make any progress 
and must quit.  However, even though the contraction phase failed to meet its convergence criteria, 
$\real{\lambda_c} \in [0,2\tau_\eps)$ may still hold, so HEC may sometimes terminate successfully in this case.  If not, 
HEC has stagnated, which is likely an indication that tolerances are too tight for the available precision of the hardware on the particular problem
(or possibly that a subroutine has failed in practice).  

For each expansion phase, we consider it to have converged once it can no longer make any meaningful rightward progress.  Our new
stopping criteria attempt to capture precisely that, and do so more accurately than the previous scheme.  
Furthermore, if the line search fails to produce a monotonic step, then the expansion routine is, by default, unable to make further
progress.  We have observed that the line search failing is generally a good sign, often implying that a stationary point has already 
been found.  We thus consider the expansion phase to have converged if either our new stopping condition is met or if the line
search fails.  Otherwise, further expansion is potentially possible.  After an expansion phase, HEC should first check 
if the expansion phase converged and whether $\real{\lambda} \in [0,2\tau_\eps)$ holds, as the two conditions together
indicate HEC has converged and can halt with success.  However, if the expansion phase has made progress, then, since it has
not converged,  HEC should continue by starting a new contraction 
phase.  Otherwise, we know that the expansion phase has not made any progress and is thus considered converged.  If 
the previous contraction phase exhausted the precision of the machine, then the HEC iteration can no longer continue and 
it has stagnated before meeting its convergence criteria for tolerances that are likely too tight.
The only remaining possibility is that the contraction phase achieved some amount of contraction but did not yet
converge.  In this last case, the contraction phase should be restarted from its most recent bracket to see if it can make further progress, which
might enable a subsequent expansion to succeed.  

The above design also causes the respective maximum iteration limits of the expansion and contraction phases to act as 
additional early termination features early within the HEC iteration, without ever comprising the final numerical accuracy.

\section{A new iteration for the complex stability radius}
\label{sec:complex_svsa}
Though we have developed Algorithm~SVSA-RF specifically to iterate over real-valued perturbations with rank at most two, it also permits a natural extension to a complex-valued rank-1 expansion iteration as well,
essentially by replacing $\real{uv^*}$ with $uv^*$ in \eqref{difference_eqn_sol}.
In Tables~\ref{table:dense_cont_csr}-\ref{table:sparse_csr}, we compare the original HEC algorithm 
with an alternative variant which employs this ODE-based expansion iteration for approximating 
the complex stability radius.  Overall there doesn't seem to be a clear answer as to which  
version performs better, as they perform roughly the same on many problems.  However, 
it is worth noting that ODE-based version had outstanding performance on continuous-time problems 
\texttt{ROC1} and \texttt{ROC2}, respectively requiring only 3.3\% and 16.6\% of the eigensolves 
as compared to the unmodified HEC algorithm.  Furthermore, on the latter example, the ODE-based
version also returns a significantly better approximation.

\begin{table}[h]
\small
\centering
\begin{tabular}{ l | cc | rr | rr | ll } 
\toprule 
\multicolumn{9}{c}{Small Dense Problems: Continuous-time}\\
\midrule
\multicolumn{1}{c}{} & \multicolumn{2}{c}{Iters} & \multicolumn{2}{c}{\# Eig} & \multicolumn{2}{c}{Time (secs)} & \multicolumn{2}{c}{CSR Approximation} \\
\cmidrule(lr){2-3}  
\cmidrule(lr){4-5}  
\cmidrule(lr){6-7}
\cmidrule(lr){8-9}
\multicolumn{1}{l}{Problem} & 
	\multicolumn{1}{c}{v1} & \multicolumn{1}{c}{v2} &	
	\multicolumn{1}{c}{v1} & \multicolumn{1}{c}{v2} &
	\multicolumn{1}{c}{v1} & \multicolumn{1}{c}{v2} &
	\multicolumn{1}{c}{$\min \{\eps_1,\eps_2\}$} & \multicolumn{1}{c}{$(\eps_1 - \eps_2)/\eps_1$} \\
\midrule

\texttt{CBM} & 4 & 4 & 169 & 242 & 29.136 & 39.991 & $3.80262077280 \times 10^{0}$ & $-1.5 \times 10^{-11}$\\
\texttt{CSE2} & 6 & 6 & 1298 & 1038 & 5.344 & 4.327 & $4.91778669279 \times 10^{1}$ & $+3.0 \times 10^{-5}$\\
\texttt{CM1} & \multicolumn{1}{c}{-} & \multicolumn{1}{c|}{-}  & \multicolumn{1}{c}{-} & \multicolumn{1}{c|}{-} & \multicolumn{1}{c}{-} & \multicolumn{1}{c|}{-} & \multicolumn{1}{c}{-} & \multicolumn{1}{c}{-}\\
\texttt{CM3} & 4 & 3 & 224 & 173 & 3.602 & 2.794 & $1.21736892616 \times 10^{0}$ & $-9.8 \times 10^{-3}$\\
\texttt{CM4} & 4 & 4 & 379 & 500 & 27.367 & 36.763 & $6.29458676972 \times 10^{-1}$ & $+1.2 \times 10^{-12}$\\
\texttt{HE6} & 11 & 11 & 20459 & 20981 & 12.524 & 12.637 & $2.02865555308 \times 10^{-3}$ & $-6.1 \times 10^{-11}$\\
\texttt{HE7} & 4 & 4 & 610 & 478 & 0.462 & 0.389 & $2.88575420548 \times 10^{-3}$ & \multicolumn{1}{c}{-}\\
\texttt{ROC1} & 3 & 3 & 4139 & 136 & 2.521 & 0.204 & $8.21970266187 \times 10^{-1}$ & \multicolumn{1}{c}{-}\\
\texttt{ROC2} & 5 & 3 & 481 & 80 & 0.361 & 0.131 & $7.49812117958 \times 10^{0}$ & $+9.1 \times 10^{-2}$\\
\texttt{ROC3} & 6 & 4 & 270 & 193 & 0.253 & 0.197 & $5.80347782972 \times 10^{-5}$ & $-6.2 \times 10^{-11}$\\
\texttt{ROC4} & 3 & 1 & 223 & 111 & 0.204 & 0.133 & $3.38236009391 \times 10^{-3}$ & $-2.7 \times 10^{-2}$\\
\texttt{ROC5} & 10 & 15.5 & 546 & 669 & 0.356 & 0.422 & $1.02041169816 \times 10^{2}$ & $+2.5 \times 10^{-7}$\\
\texttt{ROC6} & 4 & 4 & 183 & 127 & 0.176 & 0.145 & $3.88148973329 \times 10^{-2}$ & \multicolumn{1}{c}{-}\\
\texttt{ROC7} & 5 & 5 & 4439 & 4541 & 2.183 & 2.323 & $8.91295691482 \times 10^{-1}$ & \multicolumn{1}{c}{-}\\
\texttt{ROC8} & 3 & 3 & 236 & 124 & 0.226 & 0.167 & $1.51539044957 \times 10^{-1}$ & $-9.9 \times 10^{-9}$\\
\texttt{ROC9} & 6 & 6 & 705 & 674 & 0.509 & 0.479 & $3.03578083291 \times 10^{-1}$ & \multicolumn{1}{c}{-}\\
\texttt{ROC10} & 3 & 3 & 412 & 413 & 0.282 & 0.305 & $9.85411638072 \times 10^{0}$ & \multicolumn{1}{c}{-}\\

\bottomrule
\end{tabular}
\caption{
The columns are the same as described in the caption of Table~\ref{table:dense_cont} except that here, we compare
the HEC algorithm for approximating the complex stability radius (CSR) using its original expansion iteration, which
we call ``v1", and an ODE-based variant, which we call ``v2".  We tested both versions without any acceleration features enabled.  
Note that the ODE-based approach failed to find an upper bound for \texttt{CM1} and thus 
no data is reported for this example.
}
\label{table:dense_cont_csr}
\end{table}

\begin{table}
\small
\centering
\begin{tabular}{ l | cc | rr | rr | ll } 
\toprule 
\multicolumn{9}{c}{Small Dense Problems: Discrete-time}\\
\midrule
\multicolumn{1}{c}{} & \multicolumn{2}{c}{Iters} & \multicolumn{2}{c}{\# Eig} & \multicolumn{2}{c}{Time (secs)} & \multicolumn{2}{c}{CSR Approximation} \\
\cmidrule(lr){2-3}  
\cmidrule(lr){4-5}  
\cmidrule(lr){6-7}
\cmidrule(lr){8-9}
\multicolumn{1}{l}{Problem} & 
	\multicolumn{1}{c}{v1} & \multicolumn{1}{c}{v2} &	
	\multicolumn{1}{c}{v1} & \multicolumn{1}{c}{v2} &
	\multicolumn{1}{c}{v1} & \multicolumn{1}{c}{v2} &
	\multicolumn{1}{c}{$\min \{\eps_1,\eps_2\}$} & \multicolumn{1}{c}{$(\eps_1 - \eps_2)/\eps_1$} \\
\midrule

\texttt{AC5} & 3 & 3 & 177 & 173 & 0.165 & 0.163 & $1.31122274593 \times 10^{-2}$ & \multicolumn{1}{c}{-}\\
\texttt{AC12} & 3 & 3 & 1258 & 1290 & 0.789 & 0.768 & $9.24428279719 \times 10^{-2}$ & \multicolumn{1}{c}{-}\\
\texttt{AC15} & 5 & 5 & 143 & 143 & 0.156 & 0.142 & $4.22159665084 \times 10^{-2}$ & \multicolumn{1}{c}{-}\\
\texttt{AC16} & 4 & 4 & 940 & 931 & 0.553 & 0.537 & $5.46839166119 \times 10^{-2}$ & \multicolumn{1}{c}{-}\\
\texttt{AC17} & 4 & 7 & 237 & 313 & 0.198 & 0.268 & $3.33193351568 \times 10^{-6}$ & $+5.5 \times 10^{-10}$\\
\texttt{REA1} & 3 & 3 & 766 & 772 & 0.488 & 0.546 & $1.34439972514 \times 10^{-3}$ & \multicolumn{1}{c}{-}\\
\texttt{AC1} & 4 & 4 & 4787 & 4771 & 2.471 & 2.591 & $6.65190471447 \times 10^{0}$ & \multicolumn{1}{c}{-}\\
\texttt{AC2} & 4 & 4 & 511 & 570 & 0.357 & 0.393 & $3.27216087222 \times 10^{0}$ & \multicolumn{1}{c}{-}\\
\texttt{AC3} & 4 & 4 & 938 & 917 & 0.541 & 0.548 & $5.25195687767 \times 10^{-2}$ & \multicolumn{1}{c}{-}\\
\texttt{AC6} & 11 & 13 & 875 & 911 & 0.552 & 0.621 & $1.88905032255 \times 10^{-8}$ & $-9.6 \times 10^{-9}$\\
\texttt{AC11} & 7.5 & 6.5 & 286 & 347 & 0.245 & 0.268 & $4.57670218088 \times 10^{-8}$ & $+1.3 \times 10^{-8}$\\
\texttt{ROC3} & 4 & 4 & 1510 & 1485 & 1.022 & 1.027 & $4.27872787193 \times 10^{-2}$ & \multicolumn{1}{c}{-}\\
\texttt{ROC5} & 6 & 8 & 358 & 432 & 0.292 & 0.329 & $2.55709313478 \times 10^{-4}$ & $+8.0 \times 10^{-11}$\\
\texttt{ROC6} & 11 & 11 & 16832 & 16646 & 9.600 & 9.137 & $5.81391331473 \times 10^{-2}$ & \multicolumn{1}{c}{-}\\
\texttt{ROC7} & 4 & 4 & 3509 & 3480 & 1.855 & 1.928 & $9.01354009455 \times 10^{-1}$ & \multicolumn{1}{c}{-}\\
\texttt{ROC8} & 3 & 3 & 127 & 124 & 0.145 & 0.181 & $1.59160474017 \times 10^{-5}$ & $-1.2 \times 10^{-10}$\\
\texttt{ROC9} & 4 & 4 & 286 & 272 & 0.272 & 0.249 & $3.49507190967 \times 10^{-2}$ & \multicolumn{1}{c}{-}\\

\bottomrule
\end{tabular}
\caption{
See caption of Table~\ref{table:dense_cont_csr} for the description of the columns.
}
\label{table:dense_disc_csr}
\end{table}

\begin{table}
\small
\centering
\begin{tabular}{ l | cc | rr | rr | ll } 
\toprule 
\multicolumn{9}{c}{Large Sparse Problems: Continuous-time (top), Discrete-time (bottom)}\\
\midrule
\multicolumn{1}{c}{} & \multicolumn{2}{c}{Iters} & \multicolumn{2}{c}{\# Eig} & \multicolumn{2}{c}{Time (secs)} & \multicolumn{2}{c}{CSR Approximation} \\
\cmidrule(lr){2-3}  
\cmidrule(lr){4-5}  
\cmidrule(lr){6-7}
\cmidrule(lr){8-9}
\multicolumn{1}{l}{Problem} & 
	\multicolumn{1}{c}{v1} & \multicolumn{1}{c}{v2} &	
	\multicolumn{1}{c}{v1} & \multicolumn{1}{c}{v2} &
	\multicolumn{1}{c}{v1} & \multicolumn{1}{c}{v2} &
	\multicolumn{1}{c}{$\min \{\eps_1,\eps_2\}$} & \multicolumn{1}{c}{$(\eps_1 - \eps_2)/\eps_1$} \\
\midrule

\texttt{NN18} & 3 & 3 & 89 & 107 & 8.315 & 8.295 & $9.77172733234 \times 10^{-1}$ & \multicolumn{1}{c}{-}\\
\texttt{dwave} & 2 & 2 & 79 & 70 & 36.342 & 33.786 & $2.63019715625 \times 10^{-5}$ & \multicolumn{1}{c}{-}\\
\texttt{markov} & 2 & 2 & 57 & 65 & 12.179 & 13.279 & $1.61146532880 \times 10^{-4}$ & \multicolumn{1}{c}{-}\\
\texttt{pde} & 4 & 3 & 251 & 202 & 23.441 & 18.918 & $2.71186478815 \times 10^{-3}$ & $-4.4 \times 10^{-9}$\\
\texttt{rdbrusselator} & 4 & 4 & 289 & 369 & 91.167 & 112.681 & $5.35246333569 \times 10^{-4}$ & \multicolumn{1}{c}{-}\\
\texttt{skewlap3d} & 2 & 2 & 118 & 91 & 136.307 & 141.747 & $4.59992022215 \times 10^{-3}$ & \multicolumn{1}{c}{-}\\
\texttt{sparserandom} & 2 & 2 & 74 & 65 & 3.106 & 2.618 & $7.04698184526 \times 10^{-6}$ & \multicolumn{1}{c}{-}\\

\midrule

\texttt{dwave} & 2 & 2 & 42 & 33 & 21.303 & 14.683 & $2.56235064981 \times 10^{-5}$ & \multicolumn{1}{c}{-}\\
\texttt{markov} & 3 & 3 & 65 & 75 & 14.383 & 17.617 & $2.43146945130 \times 10^{-4}$ & \multicolumn{1}{c}{-}\\
\texttt{pde} & 5 & 3 & 2839 & 2091 & 184.350 & 133.007 & $2.68649594344 \times 10^{-4}$ & $-4.6 \times 10^{-4}$\\
\texttt{rdbrusselator} & 3 & 3 & 51 & 69 & 5.081 & 6.892 & $2.56948942080 \times 10^{-4}$ & \multicolumn{1}{c}{-}\\
\texttt{skewlap3d} & 2 & 2 & 47 & 56 & 58.705 & 49.745 & $3.40623440406 \times 10^{-5}$ & \multicolumn{1}{c}{-}\\
\texttt{sparserandom} & 2 & 2 & 24 & 22 & 0.927 & 0.897 & $2.53298721307 \times 10^{-7}$ & $-8.2 \times 10^{-10}$\\
\texttt{tolosa} & 4 & 4 & 2314 & 2315 & 598.283 & 547.306 & $1.76587075884 \times 10^{-7}$ & \multicolumn{1}{c}{-}\\

\bottomrule
\end{tabular}
\caption{
See caption of Table~\ref{table:dense_cont_csr} for the description of the columns.
}
\label{table:sparse_csr}
\end{table}

\end{appendices}

\end{document}